\newtheorem{thm}{Theorem}[section]
\newtheorem{prop} [thm]{Proposition}
\newtheorem{lem} [thm]{Lemma}
\newtheorem{coro}[thm]{Corollary}
\newtheorem*{thm*}{Theorem}
\theoremstyle{definition}
\newtheorem{defin}[thm]{Definition}
\newtheorem{nota}[thm]{Notation}
\theoremstyle{remark}
\newtheorem{rem}[thm]{Remark}
\newtheorem{exem}[thm]{Example}
\def\Z{\mathbb{Z}}
\def\N{\mathbb{N}}
\def\A{\mathbb{A}}
\def\O{\mathcal{O}}
\def\L{\mathcal{L}}
\def\cT{\mathcal{T}}
\def\cP{\mathcal{P}}
\def\tP{\tilde{\mathcal{P}}}
\def\id{\mathrm{id}} 
\def\Fr{\mathrm{Fr}} 
\newcommand{\aone}{\A^1}
\def\gm{\mathbb{G}_m} 
\def\gmpt{\mathbb{G}_{m,1}} 
\newcommand{\bpi}{\boldsymbol{\pi}}
\newcommand{\piaone}{{\bpi}^{\aone}}
\def\Hom{\mathrm{Hom}}
\def\homm#1#2#3{\mathrm{Hom}_{#1}(#2,#3)}
\def\uHom{\underline{\mathrm{Hom}}} 
\def\spec#1{\mathrm{Spec}(#1)}
\def\K{\mathrm{K}}
\def\KM{\mathrm{K}^{\mathrm{M}}} 
\def\KMW{\mathrm{K}^{\mathrm{M\hspace{-.2ex}W}}} 
\def\sKM{\mathbf{K}^{\mathrm{M}}} 
\def\sKMW{\mathbf{K}^{\mathrm{M\hspace{-.2ex}W}}} 
\def\I{\mathrm{I}} 
\newcommand{\tch}[1]{%
  \mathchoice{\widetilde{\mathrm{CH}}^{\raisebox{-.5ex}{$\scriptstyle#1$}}}
             {\widetilde{\mathrm{CH}}^{\raisebox{-.8ex}{$\scriptstyle#1$}}}
             {}
             {}
}
\newcommand{\tchi}[2]{%
  \mathchoice{\widetilde{\mathrm{CH}}_{#2}^{\raisebox{-.5ex}{$\scriptstyle#1$}}}
             {\widetilde{\mathrm{CH}}_{#2}^{\raisebox{-.7ex}{$\scriptstyle#1$}}}
             {}
             {}
}
\def\ch#1#2{\tch{#1}(#2)}
\def\cht#1#2#3{\tch{#1}(#2,#3)}
\def\chs#1#2#3{\tchi{#1}{#2}(#3)}
\def\chst#1#2#3#4{\tchi{#1}{#2}(#3,#4)}
\def\CH{\mathrm{CH}}
\def\graph#1{\tilde{\gamma}_{#1}}
\def\Pic{\mathrm{Pic}}
\def\Or{\mathrm{Q}}
\def\GW{\mathrm{GW}} 
\def\W{\mathrm{W}} 
\def\H{\mathrm{H}}
\def\HMW{\mathrm{H}_{\mathrm{MW}}}
\def\HI{\mathrm{H}_{\mathrm{I}}}
\def\tZ{\tilde\Z}
\def\ItZ{\mathrm{I}\tilde\Z}
\def\tZX#1{\tilde {\mathrm {c}}(#1)}
\def\itZX#1{\mathrm{I}\tilde {\mathrm {c}}(#1)}
\def\Ztr#1{\Z_{\mathrm{tr}}(#1)}
\def\ext{\mathrm{ext}}
\def\res{\mathrm{res}}
\def\bc{\mathrm{bc}}
\def\tr{\mathrm{tr}}
\def\ome#1#2{\omega_{#1/#2}}
\def\sm#1{\mathrm{Sm}_{#1}}
\def\barsm#1{\overline{\mathrm{Sm}}_{#1}}
\def\cor#1{\widetilde{\mathrm{Cor}}_{#1}}
\def\icor#1{\widetilde{\mathrm{ICor}}_{#1}}
\def\ucor#1{\mathrm{Cor}_{#1}}
\def\Adm{\mathcal{A}}
\def\unit{\eta}
\def\counit{\epsilon}
\def\tunit{\tilde\eta}
\def\tcounit{\tilde\epsilon}
\def\Tr{\mathrm{Tr}}
\def\Zar{\mathrm{Zar}}
\def\Nis{\mathrm{Nis}}
\def\Et{\mathrm{Et}}
\def\psh#1{\widetilde{\mathrm{PSh}}_{#1}}
\def\sh#1#2{\widetilde{\mathrm{Sh}}_{#2,#1}}
\def\Sh#1{\mathrm{Sh}_{#1}} 
\def\Ci#1{\mathrm{C}_{#1}}
\def\Cstar{\mathrm{C}_*^{\mathrm{sing}}}
\def\DA{\mathrm{D}_{\A^1}} 
\def\DAeff{\mathrm{D}_{\A^1}^{\text{eff}}} 
\def\DM{\mathrm{DM}} 
\def\tDM{\widetilde{\DM}} 
\begin{document}

\title{The category of finite MW-correspondences}

\author{Baptiste Calm\`es}\email{baptiste.calmes@univ-artois.fr}\address{Laboratoire de math\'ematiques de Lens \\
Facult\'e des sciences Jean Perrin \\
Universit\'e d'Artois\\
Rue Jean Souvraz SP 18\\
62307 Lens Cedex\\
France}
\author{Jean Fasel}\email{jean.fasel@gmail.com}\address{Institut Fourier-UMR 5582 \\ Universit\'e Grenoble-Alpes   \\CS 40700 \\
38058 Grenoble Cedex 9 \\
France}

\thanks{The first author acknowledges the support of the French Agence Nationale de la Recherche (ANR) under reference ANR-12-BL01-0005}

\begin{abstract}
We introduce the category of finite MW-correspondences over a perfect field $k$ with $\mathrm{char}(k)\neq 2$. We then define for any essentially smooth scheme $X$ and integers $p,q\in\Z$ MW-motivic cohomology groups $\H^{p,q}(X,\tZ)$ and begin the study of their relationship with ordinary motivic cohomology groups.
\end{abstract}

\keywords{Finite correspondences, Milnor-Witt $K$-theory, Chow-Witt groups, Motivic cohomology}

\subjclass[2010]{Primary: 11E70, 13D15, 14F42, 19E15, 19G38; Secondary: 11E81, 14A99, 14C35, 19D45}

\maketitle

\pagenumbering{arabic}


\setcounter{tocdepth}{1}
\tableofcontents

\section*{Introduction}

\medskip

Let $k$ be a perfect field and let $\sm{k}$ be the category of smooth separated schemes of finite type over $k$. One of the central ideas of V. Voevodsky in his construction of motivic cohomology is the definition of the category of finite correspondences $\ucor{k}$ (see for instance \cite{Mazza06}). Roughly speaking, the category $\ucor{k}$ is obtained from $\sm{k}$ by taking the smooth schemes as objects and formally adding transfer morphisms $\tilde f:Y\to X$ for any finite surjective morphism $f:X\to Y$ of schemes. There is an obvious functor $\sm{k}\to \ucor{k}$ and the presheaves (of abelian groups) on $\sm{k}$ endowed with transfer morphisms for finite surjective morphisms become naturally presheaves on $\ucor{k}$, also called presheaves with transfers. Classical Chow groups or Chow groups with coefficients \`a la Rost are examples of such presheaves. Having the category of finite correspondences in hand, it is then relatively easy to define motivic cohomology, which is an algebro-geometric analogue of singular cohomology in topology. The analogy between topology and algebraic geometry hinted at above extends in various directions: algebraic $\K$-theory is an analogue of topological $\K$-theory, and there is a motivic Atiyah-Hirzebruch spectral sequence relating motivic cohomology and algebraic $\K$-theory, as the classical Atiyah-Hirzebruch spectral sequence relates topological $\K$-theory and singular cohomology \cite{Friedlander02}.

However, there are also many examples of interesting (pre-)sheaves without transfers in the above sense. Our main examples here are the Chow-Witt groups \cite{Fasel08a} or the cohomology of the (stable) homotopy sheaves $\piaone_i(X,x)$ of a pointed smooth scheme $(X,x)$, most notably the Milnor-Witt $\K$-theory sheaves $\sKMW_n$ for $n\in\Z$. Such sheaves naturally appear in the Gersten-Grothendieck-Witt spectral sequence computing higher Grothendieck-Witt groups, aka Hermitian $\K$-theory \cite{Fasel09c} or in the unstable classification of vector bundles over smooth affine schemes \cite{Asok12a,Asok12b}, and thus are far from being exotic. 

Although these sheaves don't have transfers for general finite morphisms, they do have transfers for finite surjective morphisms with trivial relative canonical sheaf (depending on a trivialization of the latter), and one can hope to formalize this notion and then follow Voevodsky's construction of the derived category of motives from finite correspondences. In his work on the Friedlander-Milnor conjecture, Morel introduced a notion of generalized transfers in order to deal with this situation \cite{Morel11}. Our approach in this article is a bit different in spirit. We enlarge the category of smooth schemes using finite MW-correspondences. Roughly speaking, we replace the Chow groups (or cycles) in Voevodsky's definition by Chow-Witt groups (or cycles with extra quadratic information) and define in this way the category of finite MW-correspondences $\cor k$. The obvious functor $\sm{k}\to \ucor{k}$ factors through our category; namely there are functors $\sm{k}\to \cor k$ and $\cor k\to \ucor{k}$ whose composite is the classical functor. Given $X,Y$ smooth, the homomorphism $\cor k(X,Y)\to \ucor{k}(X,Y)$ is in general neither injective (by far) nor surjective (yet almost). We call the presheaves on $\cor k$ \emph{presheaves with MW-transfers}. It is easy to see that presheaves with MW-transfers in our sense are also presheaves with generalized transfers in Morel's sense, and we believe that the two notions are the same. A presheaf on $\ucor{k}$ is also a presheaf on $\cor k$, but the examples above are genuine presheaves with generalized transfers, so our notion includes many more examples than the classical one. 

Having $\cor k$ at hand, we define MW-motivic cohomology groups $\HMW^{p,q}(X,\Z)$ for any smooth scheme $X$ and any integers $p,q\in \Z$. The main difference with the classical groups is that the MW-motivic cohomology groups are non trivial for $q<0$, in which range they can be identified with the cohomology of the Gersten-Witt complex defined in \cite{Balmer02}. 

We foresee many applications of our main theorem and more generally of MW-motivic cohomology. For instance, it is expected that the MW-motivic cohomology groups will naturally appear in an Atiyah-Hirzebruch-type spectral sequence computing higher Grothendieck-Witt groups (aka Hermitian $\K$-theory). Moreover, these groups should give a precise idea of the stable homology sheaves $\mathbf{H}_i^{\A^1}(\gm^{\wedge n})$ of smash powers of $\gm$, as we now explain. Let $\DAeff(k)$ be the full subcategory of $\A^1$-local objects in the derived category of Nisnevich sheaves of abelian groups, and let $\DA(k)$ be the category obtained from the latter by formally inverting the Tate object. By construction, there is a functor $\DAeff(k)\to \DM(k)$, where $\DM(k)$ is obtained by inverting the Tate object in the full subcategory of $\A^1$-local objects in the derived category of (bounded below) Nisnevich sheaves with transfers. This functor factorizes through the category $\tDM(k)$ defined analogously using $\cor k$ instead of $\ucor{k}$, and thus $\DM(k)$ is in this sense closer to $\DA(k)$ than $\DM(k)$. Now, the stable homology sheaves are computed in $\DA(k)$, and it possible to define analogous sheaves in $\DM(k)$ which should be quite close to the former.

To conclude this introduction, let us mention some works related to this one. First, the approach of the Friedlander-Milnor conjecture of Morel in \cite{Morel11} is the starting point of our definition of the category $\cor k$. As discussed above, his sheaves with generalized transfers should coincide with ours. Second, there is work in progress by M. Schlichting and S. Markett on an Atiyah-Hirzebruch spectral sequence for higher Grothendieck-WItt groups. We hope to prove that the groups they obtain at page $2$ are indeed MW-motivic cohomology groups. Finally, let us mention a recent preprint of Neshitov \cite{Neshitov14} in which computations similar to ours are done in the category of framed correspondences defined by Garkusha and Panin following ideas of Voevodsky \cite{Garkusha14}. 

\subsection*{Acknowledgments}
We wish to thank Aravind Asok and Fr\'ed\'eric D\'eglise for useful remarks on a preliminary version of this article. We are also grateful to Fabien Morel, Oleg Podkopaev and Antoine Touz\'e for some conversations. Finally, we would like to thank Marco Schlichting for explaining us his joint work with Simon Markett on a Grayson type spectral sequence computing higher Grothendieck-Witt groups and Jean Barge for his idea on how to compute the kernel of the map from MW-motivic cohomology to ordinary motivic cohomology.

\subsection*{Conventions}
The schemes are separated of finite type over some perfect field $k$ with $\mathrm{char}(k)\neq 2$. If $X$ is a smooth connected scheme over $k$, we denote by $\Omega_{X/k}$ the sheaf of differentials of $X$ over $\spec k$ and write $\ome{X}{k}:=\det\Omega_{X/k}$ for its canonical sheaf. In general we define $\ome{X}{k}$ connected component by connected component. We use the same notation if $X$ is the localization of a smooth scheme at any point. If $k$ is clear from the context, we omit it from the notation. If $f:X\to Y$ is a morphism of (localizations of) smooth schemes, we set $\omega_{f}=\ome{X}{k}\otimes f^*\ome{Y}{k}^\vee$. If $X$ is a scheme and $n\in\N$, we denote by $X^{(n)}$ the set of codimension $n$ points in $X$.


\section{Milnor-Witt $\K$-theory}

In this section, we recall first the definition of Milnor-Witt $\K$-theory of a field and its associated sheaf following \cite[\S 3]{Morel08}. We then recall the definition of Chow-Witt groups and spend some time on their functorial properties. Following Morel, we don't make any assumption on the characteristic of the field.

For any field $F$, let $\KMW_*(F)$ be the $\Z$-graded associative (unital) ring freely generated by symbols $[a]$, for each $a\in F^\times$, of degree $1$ and by a symbol $\eta$ in degree $-1$ subject to the relations
\begin{enumerate}[(i)]
\item\label{relation:steinberg} $[a][1-a]=0$ for any $a\neq 0,1$.
\item\label{relation:linearity} $[ab]=[a]+[b]+\eta[a][b]$ for any $a,b\in F^\times$.
\item\label{relation:commute} $\eta[a]=[a]\eta$ for any $a\in F^\times$.
\item\label{relation:hyperbolic} $\eta(2+\eta[-1])=0$.
\end{enumerate}

If $a_1,\ldots,a_n\in F^\times$, we denote by $[a_1,\ldots,a_n]$ the product $[a_1]\cdot\ldots\cdot[a_n]$. Let $\GW(F)$ be the Grothendieck-Witt ring of non degenerate bilinear symmetric forms on $F$. Associating to a form its rank yields a surjective ring homomorphism 
\[
\mathrm{rank}:\GW(F)\to \Z
\]
whose kernel is the \emph{fundamental ideal} $\I(F)$. We can consider for any $n\in\N$ the powers $\I^n(F)$ and we set $\I^n(F)=\W(F)$ for $n\leq 0$, where the latter is the Witt ring of $F$. It follows from \cite[Lemma 3.10]{Morel08} that we have a ring isomorphism
\[
\GW(F)\to \KMW_0(F)
\]
defined by $\langle a\rangle\mapsto 1+\eta[a]$. We will thus identify $\KMW_0(F)$ with $\GW(F)$ later on. In particular, we will denote by $\langle a\rangle$ the element $1+\eta[a]$ and by $\langle a_1,\ldots,a_n\rangle$ the element $\langle a_1\rangle+\ldots+\langle a_n\rangle$. 

If $\KM_*(F)$ denotes the Milnor $\K$-theory ring defined in \cite[\S 1]{Milnor69}, we have a graded surjective ring homomorphism 
\[
f:\KMW_*(F)\to \KM_*(F)
\]
defined by $f([a])=\{a\}$ and $f(\eta)=0$. In fact, $\ker f$ is the principal (two-sided) ideal generated by $\eta$ \cite[Remarque 5.2]{Morel04}. We sometimes refer to $f$ as the \emph{forgetful} homomorphism. On the other hand, let 
\[
H:\KM_*(F)\to \KMW_*(F)
\]
be defined by $H(\{a_1,\ldots,a_n\})=\langle 1,-1\rangle[a_1,\ldots,a_n]=(2+\eta[-1])[a_1,\ldots,a_n]$. Using relation \eqref{relation:steinberg} above, it is easy to check that $H$ is a well-defined graded homomorphism of $\KMW_*(F)$-modules (where $\KM_*(F)$ has the module structure induced by $f$), that we call the \emph{hyperbolic} homomorphism. As $f(\eta)=0$, we see that $fH:\KM_n(F)\to \KM_n(F)$ is the multiplication by $2$ homomorphism.

For any $a\in F^\times$, let $\langle\langle a\rangle\rangle:=\langle a\rangle -1\in \I(F)\subset \GW(F)$ and for any $a_1,\ldots,a_n\in F^\times$ let $\langle\langle a_1,\ldots,a_n\rangle\rangle$ denote the product $\langle\langle a_1\rangle\rangle\cdots\langle\langle a_n\rangle\rangle$ (our notation differs from \cite[\S 2]{Morel04} by a sign). By definition, we have $\langle\langle a_1,\ldots a_n\rangle\rangle\in \I^m(F)$ for any $m\leq n$. In particular, we can define a map
\[
\KMW_n(F)\to \I^n(F)
\]
by $\eta^s[a_1,\ldots,a_{n+s}]\mapsto \langle\langle a_1,\ldots,a_{n+s}\rangle\rangle$ for any $s\in\N$ and any $a_1,\ldots,a_{n+s}\in F^\times$. It follows from \cite[Definition 3.3]{Morel08} and \cite[Lemme 2.3]{Morel04} that this map is a well-defined homomorphism. Moreover, the diagram
\begin{equation}\label{diag:jn}
\begin{gathered}
\xymatrix{\KMW_n(F)\ar[r]\ar[d]_-f & \I^n(F)\ar[d] \\
\KM_n(F)\ar[r]_-{s_n} & \I^n(F)/\I^{n+1}(F)}
\end{gathered}
\end{equation}
where $s_n$ is the map defined in \cite[Theorem 4.1]{Milnor69} is Cartesian by \cite[Th\'eor\`eme 5.3]{Morel04}. 

\subsection{Residues}\label{sec:residues}

Suppose now that $F$ is endowed with a discrete valuation $v:F^\times \to \Z$ with valuation ring $\O_v$, uniformizing parameter $\pi$ and residue field $k(v)$. The following theorem is due to Morel \cite[Theorem 3.15]{Morel08}.
\begin{thm} \label{thm:res}
There exists a unique homomorphism of graded groups
\[
\partial_v^{\pi}:\KMW_*(F)\to \KMW_{*-1}(k(v))
\]
commuting with the product by $\eta$ and such that $\partial_v^{\pi}([\pi,u_2,\ldots,u_n])=[\overline u_2,\ldots,\overline u_n]$ and $\partial_v^{\pi}([u_1,\ldots,u_n])=0$ for any units $u_1,\ldots,u_n\in \O_v^\times$.
\end{thm}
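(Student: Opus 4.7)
The plan is to construct $\partial_v^\pi$ by defining it on a presentation of $\KMW_*(F)$ and verifying compatibility with the defining relations. For \emph{uniqueness}, note that any $a \in F^\times$ factors as $a = u\pi^{v(a)}$ with $u \in \O_v^\times$, and iterating relation (ii) expresses $[u\pi^m]$ as a polynomial in $[u]$, $[\pi]$ and $\eta$ (negative powers being handled via $[\pi \cdot \pi^{-1}] = [1] = 0$). Consequently every monomial $[a_1,\ldots,a_n]$ rewrites, modulo the relations, as a combination of monomials of two types: those involving only units, and those with exactly one entry equal to $\pi$ (possibly with powers of $\eta$ in front, and after using graded-commutativity to move $\pi$ to the first slot). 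Since $\partial_v^\pi$ is prescribed on these two families and must commute with $\eta$, the map is uniquely determined on all of $\KMW_*(F)$.

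For \emph{existence}, I would define a candidate $\widetilde{\partial}$ on the free $\Z$-graded associative algebra generated by $[a]$, $a \in F^\times$, in degree $1$ and $\eta$ in degree $-1$, via the formula dictated by the uniqueness analysis. One then must check that $\widetilde{\partial}$ kills each of the four defining relations (i)--(iv). Relation (iii) (commutativity of $\eta$ with $[a]$) is built into the construction. Relation (iv) passes because $\widetilde{\partial}([-1]) = 0$ (since $-1$ is a unit in $\O_v$) and $\widetilde{\partial}$ vanishes on scalars, so $\widetilde{\partial}(\eta(2+\eta[-1])) = 0$ directly. Relation (ii) amounts to verifying the self-consistency of the expansion procedure when comparing $[ab]$ with $[a]+[b]+\eta[a][b]$ after factoring $a$, $b$ and $ab$ as units times powers of $\pi$.

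The main obstacle is the Steinberg relation (i): $\widetilde{\partial}([a][1-a]) = 0$ for $a \neq 0, 1$. I would proceed by case analysis on $v(a)$ and $v(1-a)$. If $v(a) = v(1-a) = 0$, then either $\bar a = 1$ or $\overline{1-a} = 1$ (making one factor of the residue vanish), or both are in $k(v)^\times \setminus \{0,1\}$, in which case the residue reduces to $[\bar a, \overline{1-a}]$ and vanishes by Steinberg in $\KMW_*(k(v))$. If $v(a) > 0$, then $1-a$ is a unit with $\overline{1-a} = 1$, so $[\overline{1-a}] = 0$ kills the residue after careful bookkeeping of the $\eta$-correction terms arising from expanding $[a] = [u\pi^{v(a)}]$. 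The case $v(a) < 0$ reduces to $v(a^{-1}) > 0$ via the identity $[a^{-1}] = -\langle a\rangle[a]$ in $\KMW$, itself a consequence of $[a \cdot a^{-1}] = [1] = 0$. The technical heart of the proof lies in tracking the $\eta$-twisted correction terms introduced by relation (ii) and verifying their combined vanishing in each subcase.
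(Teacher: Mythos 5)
The paper does not actually prove this statement: it is quoted verbatim from Morel (\cite[Theorem 3.15]{Morel08}), so there is no internal proof to compare against. Judged on its own terms, your proposal has a genuine gap in the existence half. You propose to define $\widetilde\partial$ on the free graded algebra ``via the formula dictated by the uniqueness analysis'', but that analysis produces a normal form only \emph{modulo the relations}: expanding a product such as $[u_1\pi][u_2\pi]$ by relation (ii) creates monomials containing $[\pi][\pi]$, and eliminating $[\pi]^2$ (via $[\pi][-\pi]=0$, hence $[\pi]^2=[\pi][-1]$) already uses the Steinberg relation. So there is no well-defined candidate map on the free algebra, and the claim that every monomial rewrites into the two prescribed families is false there. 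Moreover, even granting such a map, $\widetilde\partial$ is only a homomorphism of graded groups, not of rings, so killing the four relators does not imply killing the two-sided ideal they generate; one needs $\widetilde\partial(x\cdot r\cdot y)=0$ for all monomials $x,y$, which your checks do not address.

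Both problems are exactly what Morel's argument (following Milnor and Serre) is designed to avoid: one constructs a graded \emph{ring} homomorphism $\Theta_\pi\colon \KMW_*(F)\to \KMW_*(k(v))[\xi]$, where $\xi$ is a degree-one variable subject to $\xi^2=[-1]\xi$ and commuting suitably with $\eta$, by sending the free generator $[u\pi^m]$ to an explicit degree-one element. Since a free algebra maps out by specifying images of generators, $\Theta_\pi$ is automatically well defined and multiplicative, the relations need only be verified on the relators themselves, and $\partial_v^\pi$ is then extracted as the coefficient of $\xi$. Your case analysis for the Steinberg relation (on the signs of $v(a)$ and $v(1-a)$) is the right skeleton for that verification, but it must be carried out for the multiplicative $\Theta_\pi$ rather than for a putative additive $\widetilde\partial$. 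Your uniqueness argument is essentially correct once you add the reduction of $[\pi]^2$ noted above, which is legitimate in that half because all relations of $\KMW_*(F)$ are available there.
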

As for Milnor K-theory, $v: F^\times \to \Z$, there also exists a specialization map
\[
s_v^\pi:\KMW_*(F)\to \KMW_*(k(v)),
\]
which is a ring map, and that can be deduced from $\partial_v^\pi$ by the formula 
\begin{equation} \label{eq:sandres}
s_v^\pi(\alpha)=\partial_v^\pi([\pi]\alpha)-[-1]\partial_v^\pi(\alpha)
\end{equation}
(actually, one usually constructs $\partial_v^\pi$ and $s_v^\pi$ together by a trick of Serre, see \cite[Lemma 3.16]{Morel08}). 
\begin{lem}
Both the kernel of $\partial_v^\pi$ and the restriction of $s_v^\pi$ to this kernel are independent of the choice of the uniformizer $\pi$.
\end{lem}
\begin{proof}
If $\pi$ and $u\pi$ are uniformizers, for some $u \in \O_v^\times$, then $\partial_v^{u\pi}=\langle \bar{u}\rangle\partial_v^\pi$. Indeed, by uniqueness in Theorem \ref{thm:res}, it suffices to check this equality on elements of the form $[u_1,\ldots,u_n]$ or $[\pi,u_2,\ldots,u_n]$ with the $u_i$'s units, and on these it is straightforward. Formula \eqref{eq:sandres} then shows that if $\alpha\in \ker(\partial_v)$, then $s_v^\pi(\alpha)=\partial_v^\pi([\pi]\alpha)$, and we compute
\begin{align*}
s_v^{u\pi}(\alpha) &= \partial_v^{u\pi}([u\pi]\alpha) = \langle\bar{u}\rangle \partial_v^\pi([u\pi]\alpha) \\
 &= \langle\bar{u}\rangle \partial_v^\pi([u]\alpha + \langle u\rangle[\pi]\alpha) \\
 &= \langle\bar{u}\rangle \epsilon [\bar{u}] \partial_v^\pi(\alpha) + \partial_v^\pi([\pi]\alpha) && \text{by \cite[Prop.~3.17]{Morel08}} \\
 &= s_v^\pi(\alpha) && \text{since $\alpha \in \ker(\partial_v^\pi)$} \qedhere
\end{align*}
\end{proof}

This lemma allows one to define unramified Milnor-Witt $\K$-theory sheaves as follows. If $X$ is a smooth integral $k$-scheme, any point $x\in X^{(1)}$ defines a discrete valuation $v_x$ for which we can choose a uniformizing parameter $\pi_x$. We then set for any $n\in \Z$
\[
\KMW_n(X):=\ker \Big(\KMW_n\big(k(X)\big)\to \bigoplus_{x\in X^{(1)}}\KMW_{n-1}\big(k(x)\big)\Big)
\]
where the map is induced by the residue homomorphisms $\partial_{v_x}^{\pi_x}$. This kernel is independent of the choices of uniformizers $\pi_x$ by the lemma.
 
Let $i:V\subset X$ be a codimension $1$ closed smooth subvariety defining a valuation $v$ on $k(X)$ with uniformizing parameter $\pi$. We then have $k(V)=k(v)$ and the graded ring map 
\[
s_v^{\pi}:\KMW_*\big(k(X)\big)\to \KMW_*\big(k(V)\big)
\]
restricts to a map $\KMW_*\big(X\big)\to \KMW_*\big(k(V)\big)$ independent of the choice of the uniformizer because $\KMW_n(X) \subseteq \ker(\partial_v)$ so the lemma applies again. Finally, it actually lands in $\KMW_*\big(V\big)$ by \cite[proof of Lemma 2.12]{Morel08} and thus defines a morphism 
\[
i^*:\KMW_n(X)\to \KMW_n(V)
\]
satisfying $i^*(\alpha)=\partial_v^{\pi}([\pi]\alpha)$. Working inductively and locally, the same method shows that we can define pull-back maps $j^*$ for any smooth closed immersion $j:Z\to X$ \cite[p. 21]{Morel08}. On the other hand, it is easy to see that a smooth morphism $h:Y\to X$ induces a homomorphism $h^*:\KMW_n(X)\to \KMW_n(Y)$ and it follows from the standard graph factorization $X\to X\times_k Y\to Y$ that any morphism $f:X\to Y$ gives rise to a pull-back map $f^*$. Thus $X\mapsto \KMW_n(X)$ defines a presheaf $\sKMW_n$ on $\sm{k}$ which turns out to be a Nisnevich sheaf \cite[Lemma 2.12]{Morel08}. We call it the ($n$-th)\emph{Milnor-Witt sheaf}. 

Recall that one can also define residues for Milnor $\K$-theory \cite[Lemma 2.1]{Milnor69} and therefore an unramified Nisnevich sheaf $\sKM_n$ on $\sm{k}$. It is easy to check that both the forgetful and the hyperbolic homomorphisms commute with residue maps. As a consequence, we get morphisms of sheaves $f:\sKMW_n\to \sKM_n$ and $H:\sKM_n\to \sKMW_n$ for any $n\in\Z$ and the composite $fH$ is the multiplication by $2$ map. 

The multiplication map $\KMW_n(F)\times \KMW_m(F)\to \KMW_{n+m}(F)$ induces for any $m,n\in\Z$ a morphism of sheaves
\[
\sKMW_n\times \sKMW_m\to \sKMW_{n+m}
\]
that is compatible with the corresponding product on Milnor K-theory sheaves via the forgetful map.
 
\subsection{Twisting by line bundles} \label{sec:twisting}

We will also need a version of Milnor-Witt K-theory twisted by line bundles, which we now recall following \cite[\S 1.2]{Schmid98} and \cite[\S 5]{Morel08}. 

Let $V$ be a one dimensional vector space over the field $F$. One can consider the group ring $\Z[F^\times]$ and the $\Z[F^\times]$-module $\Z[V^\times]$ where $V^\times=V\setminus 0$. Letting $a \in F^\times$ act by multiplication by $\langle a\rangle$ defines a $\Z$-linear action of the group $F^\times$ on $\GW(F)=\KMW_0$, which therefore extends to a ring morphism $\Z[F^\times] \to \KMW_0(F)$. Thus, we get a $\Z[F^\times]$-module structure on $\KMW_n(F)$ for any $n$, and the action is central (since $\KMW_0(F)$ is central in $\KMW_n(F)$). We then define the $n$-th Milnor-Witt group of $F$ twisted by $V$ as 
\[
\KMW_n(F,V)=\KMW_n(F) \otimes_{\Z[F^\times]} \Z[V^\times]. 
\]
On Nisnevich sheaves, we perform a similar construction. Let $\Z[\gm]$ be the Nisnevich sheaf on $\sm k$ associated to the presheaf $U \mapsto \Z[\O(U)^\times]$. The morphism of sheaves of groups $\gm\to (\sKMW_0)^\times$ defined by $u\mapsto \langle u\rangle$ for any $u\in \O(U)^\times$ extends to a morphism of sheaves of rings $\Z[\gm] \to \sKMW_0$, turning $\sKMW_n$ into a $\Z[\gm]$-module, the action being central. 

Let now $\L$ be a line bundle over a smooth scheme $X$, and let $\Z[\L^\times]$ be the Nisnevich sheafification of $U \mapsto \Z[\L(U)\setminus 0]$. Following \cite[Chapter 5]{Morel08}, we define the Nisnevich sheaf on $\sm{X}$, the category of smooth schemes over $X$, by 
\[
\sKMW_n(\L)= \sKMW_n\otimes_{\Z[\gm]} \Z[\L^\times].
\]
(again, this is the sheaf tensor product).


\section{Transfers in Milnor-Witt $\K$-theory}\label{sec:cohomological}

A very important feature of Milnor-Witt $\K$-theory is the existence of transfers for finite field extensions. They are more subtle than the transfers for Milnor $\K$-theory, and we thus explain them in some details in this section. To avoid technicalities, we suppose that the fields are of characteristic different from $2$.

Recall first that the \emph{geometric} transfers in Milnor-Witt $\K$-theory are defined, for a monogeneous finite field extension $F=L(x)$, using the split exact sequence \cite[Theorem 3.24]{Morel08}
\[
\xymatrix{0\ar[r] & \KMW_n(L)\ar[r] & \KMW_n(L(t))\ar[r]^-\partial & \displaystyle{\bigoplus_{x\in (\A^1_L)^{(1)}} \KMW_{n-1}(L(x))} \ar[r] & 0        }
\] 
where $\partial$ is defined using the residue homomorphisms associated to the valuations corresponding to $x$ and uniformizing parameters the minimal polynomial of $x$ over $L$. If $\alpha\in \KMW_{n-1}(L(x))$, its transfer is defined by choosing a preimage in $\KMW_n(L(t))$ and then applying the residue homomorphism $-\partial_{\infty}$ corresponding to the valuation at infinity (with uniformizing parameter $-\frac 1t$). The corresponding homomorphism $\KMW_{n-1}(L(x))\to \KMW_{n-1}(L)$ is denoted by $\tau_L^F(x)$. It turns out that the geometric transfers do not generalize well to arbitrary finite field extensions $F/L$, and one has to modify them in a suitable way as follows.  

Let again $L\subset F$ be a field extension of degree $n$ generated by $x\in F$. Let $p$ be the minimal polynomial of $x$ over $L$. We can decompose the field extension $L\subset F$ as $L\subset F^{sep}\subset F$, where $F^{sep}$ is the separable closure of $L$ in $F$. If $\mathrm{char}(L)=l\neq 0$, then the minimal polynomial $p$ can be written as $p(t)=p_0(t^{l^m})$ for some $m\in \N$ and $p_0$ separable. Then $F^{sep}=L(x^{l^m})$ and $p_0$ is the minimal polynomial of $x^{l^m}$ over $L$. Following \cite[Definition 4.26]{Morel08}, we set $\omega_0(x):= p_0^\prime(x^{l^m})\in F^\times$ if $l=\mathrm{char}(L)\neq 0$ and $\omega_0(x)=p^\prime(x)\in L^\times$ if $\mathrm{char}(L)=0$. Morel then defines \emph{cohomological} transfers as the composites
\[
\KMW_n(F)\stackrel{\langle \omega_0(x)\rangle }{\longrightarrow} \KMW_n(F)\stackrel{\tau_K^L(x)}{\longrightarrow} \KMW_n(L)
\]
and denotes them by $\Tr_L^F(x)$. If now $F/L$ is an arbitrary finite field extension, we can write
\[
L=F_0\subset F_1\subset\ldots\subset F_m=F
\]
where $F_{i}/F_{i-1}$ is finite and generated by some $x_i\in F_i$ for any $i=1,\ldots,m$. We then set $\Tr^F_L:=\Tr_L^{F_1}(x_1)\circ\ldots \circ \Tr_{F_{m-1}}^{F_m}(x_m)$. It turns out that this definition is independent of the choice of the subfields $F_i$ and of the generators $x_i$ \cite[Theorem 4.27]{Morel08}.

\subsection{Transfers of bilinear forms}

The definition of geometric and cohomological transfers can be recovered from the transfers in Milnor $\K$-theory as well as the Scharlau transfers on bilinear forms as we now explain.

Recall that the Milnor-Witt $\K$-theory group $\KMW_n(L)$ fits into a Cartesian square
\[
\xymatrix{\KMW_n(L)\ar[r]\ar[d] & \I^n(L)\ar[d] \\ 
\KM_n(L)\ar[r]_-{s_n} & \overline {\I^n}(L)}
\]
for any $n\in\Z$, where $\I^n(L)=\W(L)$ for $n<0$, $\KM_n(L)=0$ for $n<0$ and $\overline \I^n(L):=\I^n(L)/\I^{n+1}(L)$ for any $n\in\N$. 

If $L\subset F$ is a finite field extension, then any non-zero $L$-linear homomorphism $f:F\to L$ induces a transfer morphism $f_*:\GW(F)\to \GW(L)$. It follows from \cite[Lemma 1.4]{Milnor73} that this homomorphism induces transfer homomorphisms $f_*:\I^n(F)\to \I^n(L)$ for any $n\in\Z$ and therefore transfer homomorphisms $\overline {f_*}:\overline {\I^n}(F)\to \overline {\I^n}(L)$ for any $n\in\N$. Recall moreover that if $g:F\to L$ is another non zero $K$-linear map, there exists a unit $b\in F^\times$ such that the following diagram commutes 
\begin{equation}\label{equ:trace}
\begin{gathered}
\xymatrix{
\GW(F)\ar[r]^-{\langle b\rangle}\ar[rd]_-{g_*} & \GW(F)\ar[d]^-{f_*} \\
 & \GW(L).
}
\end{gathered}
\end{equation}

Using the split exact sequence of \cite[Theorem 2.3]{Milnor69} and the procedure described above, one can also define transfer morphisms $N_{F/L}:\KM_n(F)\to \KM_n(L)$ (the notation reflects the fact that $N_{F/L}$ coincides in degree $1$ with the usual norm homomorphism).

\begin{lem}
For any non-zero linear homomorphism $f:F\to L$ and any $n\in\N$, the following diagram commutes
\[
\xymatrix{\KM_n(F)\ar[r]^-{N_{F/L}}\ar[d]_-{s_n} & \KM_n(L)\ar[d]^-{s_n} \\
\overline {\I^n}(F)\ar[r]_-{\overline f_*} & \overline {\I^n}(L)}
\]
\end{lem}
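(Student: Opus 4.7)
The plan is to reduce the diagram to the case of a monogenic extension $F = L(x)$ and then exploit the residue-based descriptions of both transfers.

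First, I would observe that $\overline{f_*}$ is independent of the chosen non-zero linear form $f: F \to L$. By \eqref{equ:trace}, any two such $f$, $g$ differ by multiplication by some $\langle b \rangle$, and since $\langle b \rangle - 1 = \langle\langle b\rangle\rangle \in \I(F)$, this multiplication acts trivially modulo $\I^{n+1}(F)$. So I am free to choose $f$ conveniently. Writing $L = F_0 \subset F_1 \subset \cdots \subset F_m = F$ with each step $F_i = F_{i-1}(x_i)$ monogenic, and picking $f$ to factor through this tower, both $N_{F/L}$ and $\overline{f_*}$ decompose into iterated compositions, by construction and by transitivity of the trace for bilinear forms respectively. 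It therefore suffices to prove the commutativity when $F = L(x)$.

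For a monogenic extension, $N_{F/L}(\alpha)$ is defined as $-\partial_\infty(\tilde\alpha)$, where $\tilde\alpha \in \KM_n(L(t))$ is the unique lift supported at $v_x$ furnished by Milnor's split exact sequence \cite[Theorem 2.3]{Milnor69}. For the $L$-linear form $f_0: F \to L$ dual to the power basis $1, x, \ldots, x^{d-1}$ and concentrated on $x^{d-1}$, Scharlau's classical computation gives an analogous residue formula: $\overline{(f_0)_*}$ is $-\partial_\infty$ applied to a lift to $\overline{\I^n}(L(t))$ in the split exact sequence
\[
0 \to \overline{\I^n}(L) \to \overline{\I^n}(L(t)) \xrightarrow{\partial} \bigoplus_{y \in (\A^1_L)^{(1)}} \overline{\I^{n-1}}(L(y)) \to 0
\]
extracted from the Arason--Elman--Jacob residue sequence for $\I^\bullet$ after reducing modulo $\I^{\bullet+1}$. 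These two exact sequences are compatible via the maps $s_n$ and $s_{n-1}$: checking on a generator $\{\pi, u_2, \ldots, u_n\}$ with $\pi$ a uniformizer and the $u_i$ units, both $s_{n-1}\partial$ and $\partial s_n$ send it to the class of $\langle\langle \bar u_2, \ldots, \bar u_n\rangle\rangle$ in $\overline{\I^{n-1}}(k(v))$. A diagram chase through the resulting commutative ladder of exact sequences then yields $\overline{(f_0)_*} \circ s_n = s_n \circ N_{F/L}$, and the independence of $f$ from the first step extends this to arbitrary $f$.

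The main obstacle is the identification of the ``power-basis-dual'' Scharlau transfer $\overline{(f_0)_*}$ with the residue-at-infinity formula, which is the quadratic analogue of Milnor's defining recipe for $N_{F/L}$. This requires carefully tracking the choice of uniformizers (at each finite point and at infinity) through the Arason--Elman--Jacob sequence, and also checking that the classical Scharlau transfer for $f_0$ really matches the $-\partial_\infty$ prescription. Once this structural input is in place, the remaining verification of the compatibility of $s_\bullet$ with residues is routine bookkeeping with Pfister forms.
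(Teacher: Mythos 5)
Your proposal is correct and follows the paper's own proof: first show $\overline{f_*}$ is independent of $f$ using \eqref{equ:trace} and the fact that $\langle\langle b\rangle\rangle$ kills $\overline{\I^n}$, reduce to a monogenic extension by transitivity of both transfers, and then identify the power-basis-dual Scharlau transfer with Milnor's residue-at-infinity recipe. The only difference is presentational: the paper disposes of the final step by citing Scharlau's reciprocity theorem, whereas you unpack that citation into the comparison of the two split residue exact sequences over $L(t)$ via $s_\bullet$.
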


\begin{proof}
Observe first that for any $b\in F^\times$, we have $\langle -1,b\rangle\cdot \overline {\I^n}(F)=0$. It follows thus from Diagram \eqref{equ:trace} that $\overline f_*=\overline g_*$ for any non-zero linear homomorphisms $f,g:L\to K$. Now both the transfers for Milnor $\K$-theory and for $\overline {\I^n}$ are functorial, and it follows that we can suppose that the extension $L\subset F$ is monogeneous, say generated by $x\in F$. If $n:=[F:L]$, then $1,x,\ldots, x^{n-1}$ is a $L$-basis of $F$ and we define the $L$-linear map $f:F\to L$ by $f(x^i)=0$ if $i=0,\ldots,n-2$ and $f(x^{n-1})=1$. The result now follows from \cite[Theorem 4.1]{Scharlau72}.
\end{proof}

As a consequence of the lemma, we see that any non-zero linear map $f:F\to L$ induces a transfer homomorphism $f_*:\KMW_n(F)\to \KMW_n(L)$ for any $n\in\Z$. 

\begin{lem}\label{lem:geometrictransfer}
Let $L\subset F$ be a monogeneous field extension of degree $n$ generated by $x\in F$. Then the geometric transfer is equal to the transfer $f_*$ where $f$ is the $L$-linear map defined by $f(x^i)=0$ if $i=0,\ldots,n-2$ and $f(x^{n-1})=1$.
\end{lem}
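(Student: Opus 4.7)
The strategy is to exploit the Cartesian square \eqref{diag:jn} identifying $\KMW_n(L)$ with $\KM_n(L) \times_{\overline {\I^n}(L)} \I^n(L)$. Since both $\tau_L^F(x)$ and $f_*$ are group homomorphisms $\KMW_n(F) \to \KMW_n(L)$, it is enough to verify their equality after projecting to $\KM_n(L)$ and to $\I^n(L)$.

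For the projection to Milnor K-theory, the map $f_*$ is constructed through the Cartesian square together with the preceding lemma (which shows that on $\overline {\I^n}$ the Scharlau transfer equals $s_n \circ N_{F/L}$), so its $\KM$-component is precisely the Bass--Tate norm $N_{F/L}$. For the geometric transfer, the residue homomorphisms $\partial_v^\pi$ of Morel are compatible via the forgetful morphism with the Milnor residues, so the splitting of Morel's residue sequence for $\KMW_n(L(t))$ sits above the classical splitting for $\KM_n(L(t))$; hence $\tau_L^F(x)$ projects to the corresponding geometric transfer in Milnor K-theory, which is $N_{F/L}$ by the theorem of Bass--Tate. The two projections to $\KM_n(L)$ thus agree.

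For the projection to $\I^n(L)$, one uses the analogous split short exact sequence
\[
0 \to \I^n(L) \to \I^n(L(t)) \xrightarrow{\partial} \bigoplus_{y \in (\A^1_L)^{(1)}} \I^{n-1}(L(y)) \to 0,
\]
classically known and compatible with Morel's sequence through the canonical map $\sKMW_* \to \I^*$. The geometric transfer therefore restricts on $\I^n(F)$ to the operation of lifting through $\partial$ and applying $-\partial_\infty$; the task is to identify this composite with the Scharlau transfer associated to the linear form $f$ prescribed in the statement. Since $\I^n(F)$ is additively generated by Pfister symbols $\langle\langle a_1, \ldots, a_n \rangle\rangle$, this reduces to choosing polynomial lifts $\tilde a_i(t) \in L[t]$ with $\tilde a_i(x) = a_i$ and computing both $-\partial_\infty$ on the assembled Pfister form in $\I^n(L(t))$ and the Scharlau transfer of the original symbol along $f$.

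The main obstacle is precisely this identification on $\I^n$: one must verify that the residue at infinity returns the Scharlau transfer for the \emph{specific} linear form $f(x^i) = \delta_{i,n-1}$ rather than for some other normalisation such as the trace form (which would differ by a factor of $\langle \omega_0(x) \rangle$ in the separable case and would produce the cohomological transfer $\Tr_L^F(x)$ instead). This is the quadratic-form counterpart of the classical residue description of Milnor's norm, and once established, combining it with the Milnor K-theory check via the Cartesian square completes the proof.
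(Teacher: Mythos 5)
Your reduction via the Cartesian square \eqref{diag:jn} is sound, and it matches how $f_*$ is actually defined on $\KMW_n$ in the paper: by construction its $\KM$-component is $N_{F/L}$ and its $\I^n$-component is the Scharlau transfer along $f$, so comparing the two homomorphisms component-wise is exactly the right move, and the $\KM_n$ comparison is essentially definitional (the paper defines $N_{F/L}$ by the same lift-and-residue-at-infinity procedure). The problem is that the step you explicitly flag as ``the main obstacle'' --- namely that $-\partial_\infty$ applied to a lift returns the Scharlau transfer for the \emph{specific} functional $f(x^i)=\delta_{i,n-1}$ rather than some other normalisation --- is the entire nontrivial content of the lemma, and you leave it conditional (``once established \ldots completes the proof''). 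As written, the argument does not close.

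That missing identification is precisely Scharlau's reciprocity theorem \cite[Theorem 4.1]{Scharlau72}: the sum over all closed points of $\mathbb{P}^1_L$ of the transfers of the second residue forms vanishes, where the transfer at a finite point defined by a monic irreducible $p$ of degree $d$ is taken with respect to the functional sending $1,\bar t,\ldots,\bar t^{d-2}$ to $0$ and $\bar t^{d-1}$ to $1$, and the contribution at infinity is (up to sign) the identity. Applied to a lift $\beta$ of $\alpha$ supported only at the point corresponding to $x$, this gives $-\partial_\infty(\beta)=f_*(\alpha)$ on the $\I^n$ (indeed $\GW$) level, which is the whole lemma. The paper's proof consists of exactly this one citation, so your proposal is not wrong in strategy --- it is the paper's strategy with its single essential input removed.
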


\begin{proof}
Once again, this follows immediately from Scharlau's reciprocity theorem \cite[Theorem 4.1]{Scharlau72}.
\end{proof}

\begin{lem}
Suppose that $L\subset F$ is a separable field extension, generated by $x\in F$. Then the cohomological transfer coincides with the transfer obtained via the trace map $F\to L$.
\end{lem}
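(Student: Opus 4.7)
The plan is to reduce the statement to Euler's classical trace identity for $p'(x)$. Unpacking definitions, since $F/L$ is separable the integer $m$ appearing in the construction of $\omega_0(x)$ vanishes and $p_0=p$, so that $\omega_0(x)=p'(x)$ and the cohomological transfer reads
\[
\Tr_L^F(x)=\tau_L^F(x)\circ \langle p'(x)\rangle.
\]
By Lemma~\ref{lem:geometrictransfer}, the geometric transfer $\tau_L^F(x)$ coincides with the Scharlau transfer $f_*$ attached to the $L$-linear form $f:F\to L$ defined by $f(x^i)=\delta_{i,n-1}$ for $0\le i\le n-1$, where $n=[F:L]$. So the statement reduces to the equality of transfers $(\mathrm{Tr}_{F/L})_*=f_*\circ\langle p'(x)\rangle$ on $\KMW_*(F)$.

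The first main step is the classical Euler identity
\[
\mathrm{Tr}_{F/L}\!\left(\frac{x^i}{p'(x)}\right)=\delta_{i,n-1},\qquad 0\le i\le n-1,
\]
which I would prove by Lagrange interpolation at the roots of $p$ in an algebraic closure of $L$: writing $p(t)=\prod_j (t-x_j)$ with $x=x_1$, Lagrange's formula expresses $t^i$ (for $i\le n-1$) as $\sum_j x_j^i\prod_{k\ne j}\frac{t-x_k}{x_j-x_k}$, and reading off the coefficient of $t^{n-1}$ gives the identity since $p'(x_j)=\prod_{k\ne j}(x_j-x_k)$. Translated back, this says $f(y)=\mathrm{Tr}_{F/L}\bigl(y/p'(x)\bigr)$ for every $y\in F$, or equivalently $\mathrm{Tr}_{F/L}(y)=f\bigl(p'(x)\,y\bigr)$, so that $f$ and $\mathrm{Tr}_{F/L}$ differ by the element $p'(x)$ in the one-dimensional $F$-module $\mathrm{Hom}_L(F,L)$.

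The second main step is to upgrade this equality of $L$-linear forms into an equality of transfers on Milnor-Witt $\K$-theory. For any two non-zero $L$-linear forms $\phi,\psi:F\to L$ related by $\psi(y)=\phi(by)$ with $b\in F^\times$, a direct computation on rank one forms gives $\psi_*(\langle u\rangle)=\phi_*(\langle b\rangle\langle u\rangle)$ in $\GW(L)$, so that $\psi_*=\phi_*\circ\langle b\rangle$ on $\GW(F)=\KMW_0(F)$; this is precisely the content of diagram~\eqref{equ:trace} for our specific $b$. Since $\I^\bullet$ is a $\GW$-ideal, the same identity then holds on the $\I^n(F)\to\I^n(L)$ transfers for every $n$. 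On the Milnor side it is automatic, because the forgetful image of $\langle b\rangle$ is $1$ and $N_{F/L}$ does not depend on the chosen functional. Pulling back through the Cartesian square~\eqref{diag:jn}, the identity $\psi_*=\phi_*\circ\langle b\rangle$ lifts uniquely to $\KMW_n(F)$ for all $n\in\Z$. Applying this with $\phi=f$, $\psi=\mathrm{Tr}_{F/L}$ and $b=p'(x)$ yields $(\mathrm{Tr}_{F/L})_*=f_*\circ\langle p'(x)\rangle=\tau_L^F(x)\circ\langle p'(x)\rangle=\Tr_L^F(x)$, which is exactly the cohomological transfer.

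The only nontrivial input is Euler's identity; the rest is a careful bookkeeping with the Cartesian square and with Morel's normalization conventions. The main obstacle is precisely this bookkeeping: one must verify that Morel's $\omega_0(x)$ really is $p'(x)$ on the nose in the separable case (rather than up to a sign or a square), and that $\langle b\rangle$ rather than $\langle b^{-1}\rangle$ is the element that appears when comparing the rank one forms built from $f$ and $\mathrm{Tr}_{F/L}$, so that the conclusion matches the definition of $\Tr_L^F(x)$ rather than a twisted variant of it.
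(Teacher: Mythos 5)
Your proof is correct and follows essentially the same route as the paper: the paper's one-line proof cites \cite[III, \S 6, Lemme 2]{Serre68}, which is precisely the Euler trace identity $\mathrm{Tr}_{F/L}(x^i/p'(x))=\delta_{i,n-1}$ that you prove by Lagrange interpolation, and the rest of your argument (identifying $\omega_0(x)=p'(x)$ in the separable case, comparing the two functionals via diagram \eqref{equ:trace}, and lifting through the Cartesian square) is exactly the bookkeeping the paper leaves implicit.
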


\begin{proof}
This is an immediate consequence of \cite[III, \S 6, Lemme 2]{Serre68}. 
\end{proof}

If the extension $L\subset F$ is purely inseparable, then the cohomological transfer coincides by definition with the geometric transfer. It follows that the cohomological transfer can be computed using trace maps (when the extension is separable), the other homomorphisms described in Lemma \ref{lem:geometrictransfer} (when the extension is inseparable) or a combination of both via the factorization $L\subset F^{sep}\subset F$.

\subsection{Canonical orientations}

In order to properly define the category of finite MW-correspondences, we will have to use differential forms to twist Milnor-Witt $\K$-theory groups. In this section, we collect a few useful facts about orientations of relative sheaves, starting with the general notion of an orientation of a line bundle.

Let $X$ be a scheme, and let $\mathcal N$ be a line bundle over $X$. Recall from \cite[Definition 4.3]{Morel08} that an orientation of $\mathcal N$ is a pair $(\L,\psi)$, where $\L$ is a line bundle over $X$ and $\psi:\L\otimes \L\simeq \mathcal N$ is an isomorphism. Two orientations $(\L,\psi)$ and $(\L^\prime,\psi^\prime)$ are said to be equivalent if there exists an isomorphism $\alpha:\L\to \L^\prime$ such that the diagram
\[
\xymatrix@C=2ex{
\L\otimes \L\ar[rr]^-{\alpha\otimes\alpha}\ar[rd]_-{\psi^\prime} &  & \L^\prime\otimes\L^\prime\ar[ld]^-\psi \\
 & \mathcal N & 
}
\]
commutes. The set of equivalence classes of orientations of $\mathcal N$ is denoted by $\Or(\mathcal N)$. Any invertible element $x$ in the global sections of $\L$ gives a trivialization $\O_X \simeq \L$ sending $1$ to $x$. This trivialization can be considered as an orientation $(\O_X,q_x)$ of $\L$ via the canonical identification $\O_X \otimes \O_X \simeq \O_X$ given by the multiplication. In other words, on sections, $q_x(a \otimes b)=abx$. Clearly, $q_x=q_{x'}$ if and only if $x=u^2 x'$ for some invertible global section $u$.

Let $k\subset L\subset F$ be field extensions such that $F/L$ is finite and $F/k$ and $L/k$ are finitely generated and separable (possibly transcendental). Let $\omega_{F/L}:=\omega_{F/k}\otimes_L\omega_{L/k}^\vee$ be its relative $F$-vector space (according to our conventions, this vector space is the same as $\omega_f$, where $f:\spec F\to \spec L$ is the morphism induced by $L\subset F$). Our goal is to choose for such an extension a canonical orientation of $\omega_{F/L}$.

Suppose first that the extension $L\subset F$ is purely inseparable. In that case, we have a canonical bijection of $\Or(F)$-equivariant sets $\Fr:\Or(\omega_{L/k}\otimes_L F)\to \Or(\omega_{F/k})$ induced by the Frobenius \cite[\S 2.2.4]{Schmid98}. Any choice of a non-zero element $x$ of $\omega_{L/k}$ yields an $L$-linear homomorphism $x^\vee:\omega_{L/k}\to L$ defined by $x^\vee(x)=1$ and an orientation $\Fr(q_x)\in \Or(\omega_{F/k})$. We thus obtain a class in $\Or(\omega_{F/k}\otimes_L\omega_{L/k}^\vee)$ represented by $\Fr(x)\otimes q_{x^\vee}$.

If $x'=ux$ for some $u\in L^\times$, then $(x^\prime)^\vee=u^{-1}x^\vee$ and $\Fr(q_{x'})$ satisfies $\Fr(q_{x'})=q_u \Fr(q_x)\in \Or(\omega_{F/k})$. It follows that $\Fr(q_x)\otimes q_{x^\vee}=\Fr(q_{x'})\otimes q_{(x')^\vee}\in \Or(\omega_{F/k}\otimes_L\omega_{L/k}^\vee)$ and this class is thus independent of the choice of $x\in \omega_{L/k}$. By definition, we have $\omega_{F/L}:=\omega_{F/k}\otimes_L\omega_{L/k}^\vee$ and we therefore get a canonical orientation in $\Or(\omega_{F/L})$.

Suppose next that the extension $L\subset F$ is separable. In that case, the module of differentials $\Omega_{F/L}=0$ and we have a canonical isomorphism $\omega_{F/k}\simeq \omega_{L/k}\otimes_L F$. It follows that $\omega_{F/L}\simeq F$ canonically and we choose the orientation $q_1$ given by $1\in F$ under this isomorphism.
 
We have thus proved the following result.

\begin{lem}\label{lem:orientation}
Let $k\subset L\subset F$ be field extensions such that $F/L$ is finite, $L/k$ and $F/k$ are finitely generated and separable (possibly transcendental). Then there is a canonical orientation of $\omega_{F/L}$.
\end{lem}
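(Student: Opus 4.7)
The plan is to construct the canonical orientation of $\omega_{F/L}$ by factoring $L\subset F$ into its separable and purely inseparable parts and combining the two cases already prepared in the discussion preceding the statement. Set $F^{sep}$ to be the separable closure of $L$ in $F$, so that $L\subset F^{sep}$ is separable and $F^{sep}\subset F$ is purely inseparable, and both $F^{sep}/k$ and $F/k$ remain finitely generated and separable.

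For the separable step $L\subset F^{sep}$: since $\Omega_{F^{sep}/L}=0$, the transitivity exact sequence for K\"ahler differentials yields a canonical isomorphism $\omega_{F^{sep}/k}\simeq \omega_{L/k}\otimes_L F^{sep}$, from which $\omega_{F^{sep}/L}\simeq F^{sep}$ canonically, and I take the class of $q_1$ as the canonical orientation. For the purely inseparable step $F^{sep}\subset F$: I pick any non-zero $x\in \omega_{F^{sep}/k}$, which gives both the dual trivialization $q_{x^\vee}$ of $\omega_{F^{sep}/k}^\vee\otimes_{F^{sep}}F$ and, via the Frobenius bijection $\Fr\colon \Or(\omega_{F^{sep}/k}\otimes_{F^{sep}} F)\to \Or(\omega_{F/k})$, the orientation $\Fr(q_x)$ of $\omega_{F/k}$. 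The tensor product $\Fr(q_x)\otimes q_{x^\vee}$ then represents a class in $\Or(\omega_{F/F^{sep}})$. To check independence of $x$, I replace $x$ by $x'=ux$ with $u\in (F^{sep})^\times$: then $(x')^\vee=u^{-1}x^\vee$, whereas the $(F^{sep})^\times$-equivariance of $\Fr$ gives $\Fr(q_{x'})=q_u\Fr(q_x)$, so the two factors of $u$ cancel in the tensor product.

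The two cases are then combined through the canonical isomorphism
\[
\omega_{F/L}\simeq \omega_{F/F^{sep}}\otimes_F \bigl(F\otimes_{F^{sep}}\omega_{F^{sep}/L}\bigr)
\]
coming from the standard tower formula for relative canonical sheaves applied to $L\subset F^{sep}\subset F$: I tensor the canonical orientation $q_1$ of $\omega_{F^{sep}/L}$ constructed in the separable step with the canonical orientation of $\omega_{F/F^{sep}}$ constructed in the purely inseparable step.

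The main obstacle is the well-definedness in the purely inseparable case, namely establishing the compatibility $\Fr(q_{ux})=q_u\Fr(q_x)$ between the Frobenius bijection and scaling by units of $F^{sep}$; this is what makes the orientation independent of the auxiliary choice of $x$. Once this point is verified, functoriality of $\Or$ and of the tower formula make the combined orientation independent of all remaining choices, so the construction produces a canonical element of $\Or(\omega_{F/L})$.
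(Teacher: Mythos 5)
Your proposal is correct and follows essentially the same route as the paper: factor $L\subset F$ through the separable closure $F^{sep}$, use the canonical isomorphism $\omega_{F^{sep}/L}\simeq F^{sep}$ (hence $q_1$) in the separable step, use $\Fr(q_x)\otimes q_{x^\vee}$ with the equivariance $\Fr(q_{ux})=q_u\Fr(q_x)$ in the purely inseparable step, and tensor the two via the tower formula. This is precisely the argument the paper assembles from its preceding discussion, including the key well-definedness check you single out.
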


\begin{proof}
It suffices to consider $L\subset F^{sep}\subset F$ and the two cases described above. 
\end{proof}

\section{Chow-Witt groups}\label{sec:chowwitt}

Recall the construction of Milnor-Witt K-theory sheaves twisted by line bundles from section \ref{sec:twisting}.

\begin{defin}
For any smooth scheme $X$, any line bundle $\L$ over $X$, any closed subset $Z\subset X$ and any $n\in\N$, we define the \emph{$n$-th Chow-Witt group} (twisted by $\L$, supported on $Z$) by $\chst nZX{\L}:=\H^n_Z(X,\sKMW_n(\L))$. 
\end{defin}
If $\L=\O_X$ (resp. $Z=X$), we omit $\L$ (resp. $Z$) from the notation. Provided the base field $k$ is perfect, the Rost-Schmid complex defined by Morel in \cite[Chapter 5]{Morel08} provides a flabby resolution of $\sKMW_n(\L)$ and we can use it to compute the cohomology of this sheaf. It follows from \eqref{diag:jn} above and \cite[Remark 7.31]{Fasel08a} that this definition coincides with the one given in \cite[D\'efinition 10.2.14]{Fasel08a}. 

The groups $\chst nZX{\L}$ are contravariant in $X$ (and $\L$). If $f:X\to Y$ is a finite morphism between smooth schemes of respective (constant) dimension $d_X$ and $d_Y$, then there is a push-forward map 
\[
f_*:\chst nZX{\omega_{f}\otimes f^*\L}\to \chst {n+d_Y-d_X}{f(Z)}Y{\L}
\]
for any line bundle $\L$ over $Y$ \cite[Corollary 5.30]{Morel08}. More generally, one can define a push-forward map as above for any proper morphism $f:X\to Y$ \cite[Corollaire 10.4.5]{Fasel08a}. Actually, the push-forward map can be slightly generalized if one considers supports. If $f:X\to Y$ is a morphism of smooth schemes and $Z\subset X$ is a closed subscheme which is finite over $W\subset Y$, then we can define a push-forward map
\[
f_*:\chst nZX{\omega_{f}\otimes f^*\L}\to \chst {n+d_Y-d_X}{W}Y{\L}
\]
along the formula given in \cite[p. 125]{Morel08}. Indeed, it suffices to check that the proof of \cite[Corollary 5.30]{Morel08} holds in that case, which is easy.

Observe now that the forgetful morphism of sheaves $\sKMW_n(\L)\to \sKM_n$ yields homomorphisms $\chst nZX{\L}\to \CH^n_Z(X)$ for any $n\in\N$, while the hyperbolic morphism $\sKM_n\to \sKMW_n(\L)$ yields homomorphisms $\CH^n_Z(X)\to \chst nZX{\L}$ for any $n\in\N$. The composite $\sKM_n\to \sKMW_n(\L)\to \sKM_n$ being multiplication by $2$, the composite homomorphism
\[
\CH^n_Z(X)\to \chst nZX{\L}\to \CH^n_Z(X)
\]
is also the multiplication by $2$. Both the hyperbolic and forgetful homomorphisms are compatible with the pull-back and the push-forward maps. Moreover, the total Chow-Witt group of a smooth scheme $X$ is endowed with a ring structure refining the intersection product on Chow groups (i.e. the forgetful homomorphism is a ring homomorphism). More precisely, as for usual Chow groups, there is an external product 
\[
\chst nZX{\L}\times \chst mWX{\mathcal N}\to \chst {m+n}{Z\times W}{X\times X}{p_1^*\L\otimes p_2^*\mathcal N}
\]
commuting to pull-backs and push-forwards. By pulling back along the diagonal, it yields a cup-product
\[
\chst nZX{\L}\times \chst mWX{\mathcal N}\to \chst {m+n}{Z\cap W}X{\L\otimes\mathcal N}
\]
for any $m,n\in\Z$, any line bundles $\L$ and $\mathcal N$ over $X$ and any closed subsets $Z,W\subset X$. It is associative, and its unit is given by the pull-back to $X$ of $\langle 1\rangle\in \sKMW_0(k)=\GW(k)$ \cite[\S 6]{Fasel07}. In general, the product is not commutative. If $\alpha\in \chst nZX{\L}$ and $\beta \in \chst mWX{\mathcal N}$, we have $\alpha\cdot \beta=\langle -1\rangle^{mn}\beta\cdot \alpha$ (under the canonical identification $\mathcal N\otimes \L\simeq \L\otimes \mathcal N$) by \cite[Remark 6.7]{Fasel07}.

For the sake of completeness, recall that Chow-Witt groups satisfy homotopy invariance by \cite[Corollaire 11.3.2]{Fasel08a}.

\subsection{Some useful results}

The goal of this section is to state the analogues of some classical formulas for Chow-Witt groups. Most of them are ``obvious'' in the sense that their proofs are basically the same as for Chow groups. Before stating our first result, let us recall that two morphisms $f:X\to Y$ and $g:U\to Y$ are \emph{Tor-independent} if for every $x\in X$, $y\in Y$ and $u\in U$ such that $f(x)=g(u)=y$ we have $Tor_n^{\O_{Y,y}}(\O_{X,x},\O_{U,u})=0$ for $n\geq 1$.

\begin{prop}[Base change formula]\label{prop:basechange}
Let
\[
\xymatrix{X^\prime\ar[r]^-v\ar[d]_-g & X\ar[d]^-f \\
Y^\prime\ar[r]_-u & Y}
\]
be a Cartesian square of smooth schemes with $f$ proper. Suppose that $f$ and $u$ are Tor-independent. Then $u^*f_*=g_*v^*$.
\end{prop}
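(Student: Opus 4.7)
The plan is to reduce the statement to two tractable special cases via the graph factorization of $u$. Write $u = p_Y \circ \graph{u}$, where $\graph{u} : Y' \to Y' \times_k Y$ is the graph closed immersion (a regular embedding since $Y$ is smooth) and $p_Y : Y' \times_k Y \to Y$ is the smooth projection. The given Cartesian square decomposes as the vertical composition of two Cartesian squares: the outer one pulls $f$ back along $p_Y$, yielding the projection $X \times_k Y' \to Y' \times_k Y$, while the inner one pulls this back along $\graph{u}$, yielding $X' \to Y'$. Tor-independence of $f$ and $u$ transfers to both smaller squares: it is automatic for the outer one because $p_Y$ is flat, and for the inner one it follows from the original hypothesis by a standard change-of-rings computation. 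Since push-forwards and pull-backs are functorial in compositions, it is enough to treat separately the two cases: (a) $u$ smooth, and (b) $u$ a regular closed immersion.

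Case (a) is verified directly on the Rost-Schmid complex. When $u$ is smooth, the pull-back $u^*$ sends a generator attached to a codimension-$n$ point $y \in Y$ to the sum of generators attached to the codimension-$n$ points $y' \in u^{-1}(y)$, and the analogous description holds for $v^*$. The push-forward $f_*$ is computed via the Milnor-Witt transfers of Section \ref{sec:cohomological} applied to the resulting finite residue field extensions. Both $u^* f_*$ and $g_* v^*$ then produce the same weighted sum indexed by the codimension-$n$ points of $X'$, and the canonical isomorphism $v^*\ome{X}{Y} \simeq \ome{X'}{Y'}$ coming from the smooth base change of sheaves of differentials ensures that the twisting line bundles are compatibly identified.

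Case (b), the regular closed immersion case, is the main obstacle. Here $u^*$ is not realized as a naive restriction of cycles but through a specialization procedure based on deformation to the normal bundle, as in \cite[\S 10]{Fasel08a}, and one must verify that this specialization commutes with $f_*$. The Tor-independence hypothesis is essential at this step: it guarantees that $X'$ sits in $X$ as a smooth regular embedding of the expected codimension and that the conormal sheaf of $X'$ in $X$ pulls back canonically from that of $Y'$ in $Y$, so that the determinantal line bundles align under $v^*$. Once these orientation identifications are pinned down, the argument proceeds along the same lines as the classical proof for ordinary Chow groups (Fulton's Proposition 1.7 together with its refinement for regular embeddings), carried out on the Rost-Schmid complex and using the compatibility of Milnor-Witt transfers with the specialization map built into the definition of $u^*$.
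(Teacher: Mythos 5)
Your proof follows essentially the same route as the paper: both factor $u$ through its graph to split the square into a projection part and a closed-immersion part, dispose of the projection case by (flat/smooth) base change on the Rost--Schmid complex, and handle the closed immersion by an excess-intersection argument in which Tor-independence forces the excess bundle to be trivial (the paper cites \cite[Th\'eor\`eme 12.3.6]{Fasel08a} and \cite[Theorem 2.2]{Fasel09} for these two steps rather than sketching them directly). The argument is correct and no further comparison is needed.
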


\begin{proof}
We first break the square into two Cartesian squares
\[
\xymatrix{X^\prime\ar[r]^-{(g,v)}\ar[d]_-g & Y^\prime\times X\ar[r]^-{p_X}\ar[d]_-{1\times f} & X\ar[d]^-f \\
Y^\prime\ar[r]_-{\Gamma_u} & Y^\prime\times Y\ar[r]_-{p_Y} & Y    }
\] 
where $\Gamma_u$ is the graph of $u$ and the right-hand horizontal morphisms are the respective projections. By \cite[Th\'eor\`eme 12.3.6]{Fasel08a}, we know that the generalized base change formula holds for the right-hand square. Moreover, the morphisms $(1\times f)$ and $\Gamma_u$ are Tor-independent since $f$ and $u$ are. We are thus reduced to show that the formula holds if $u:Y^\prime\to Y$ (and therefore $v:X^\prime\to X$) is a regular embedding of smooth schemes. This follows from \cite[Theorem 2.12]{Asok13}.
\end{proof}

\begin{rem}\label{rem:choice}
In all the formulas involving line bundles as "orientations", we have to specify the isomorphisms we use to identify them. In this paper, we will need the Base change formula in case $u$ is smooth and we then use the following identifications. First, we have a canonical isomorphism $g^*(\omega_{Y^\prime/Y})\simeq \omega_{X^\prime/X}$ given by \cite[Corollary 4.3]{Kunz86}. Since $u$ is smooth, the first fundamental exact sequence
\[
u^*\Omega_{Y/k}\to \Omega_{Y^\prime/k}\to \Omega_{Y^\prime/Y}\to 0
\]
is also exact on the left, and thus yields an isomorphism $u^*\omega_{Y/k}\simeq \omega_{Y^\prime/k}\otimes \omega_{Y^\prime/Y}^\vee$. On the other hand, the smoothness of $v$ and the same argument give an isomorphism $v^*\omega_{X/k}\simeq \omega_{X^\prime/k}\otimes \omega_{X^\prime/X}^\vee$. The base change formula is obtained via the isomorphisms of line bundles
\[
v^*\omega_{X/k}\simeq \omega_{X^\prime/k}\otimes \omega_{X^\prime/X}^\vee\simeq \omega_{X^\prime/k}\otimes g^*(\omega_{Y^\prime/Y})^\vee
\]
and
\[
u^*\omega_{Y/k}\simeq \omega_{Y^\prime/k}\otimes \omega_{Y^\prime/Y}^\vee.
\]
\end{rem}

\begin{rem}\label{rem:noncartesian}
There is no need for $f$ to be proper in the above proposition, as long as we consider supports which are proper over the base. More precisely, suppose that we have a Cartesian square
\[
\xymatrix{X^\prime\ar[r]^-v\ar[d]_-g & X\ar[d]^-f \\
Y^\prime\ar[r]_-u & Y}
\]
of smooth schemes with $f$ and $u$ Tor-independent. Let $M\subset X$ be a closed subset such that the composite morphism $M\subset X\stackrel f\to Y$ is proper (here $M$ is endowed with its reduced scheme structure). Then the formula $u^*f_*=g_*v^*$ holds for any $\alpha\in \chst nMX{\omega_{X/Y}\otimes f^*\L}$. The proof is the same as the proof of the proposition, taking supports into account.
\end{rem}

\begin{coro}[Projection formula]\label{cor:pformula}
Let $m,n\in\N$ and let $\L$, $\mathcal N$ be line bundles over $Y$. Let $Z\subset X$ and $W\subset Y$ be closed subsets. If $f:X\to Y$ is a proper morphism of (constant) relative dimension $c\in \Z$, we have 
\[
f_*(\alpha)\cdot \beta=f_*(\alpha\cdot f^*(\beta))
\]
for any $\alpha\in \chst mZX{\omega_{f}\otimes f^*\L}$ and $\beta\in \chst nWY{\mathcal N}$.
\end{coro}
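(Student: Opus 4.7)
The plan is to deduce the projection formula from the base change formula (the previous Proposition / Remark~\ref{rem:noncartesian}) together with the compatibility of the external product with proper push-forward. First I would rewrite the cup product on $Y$ as the pull-back of the external product along the diagonal $\Delta_Y:Y\to Y\times Y$, so that
\[
f_*(\alpha)\cdot\beta \;=\; \Delta_Y^*\bigl(f_*(\alpha)\boxtimes\beta\bigr).
\]
Since external product is compatible with proper push-forward (this is a standard feature of the Rost--Schmid formalism, which one can prove cycle-by-cycle on flabby resolutions exactly as for Chow groups), one has $(f\times 1_Y)_*(\alpha\boxtimes\beta)=f_*(\alpha)\boxtimes\beta$, and hence
\[
f_*(\alpha)\cdot\beta \;=\; \Delta_Y^*\,(f\times 1_Y)_*(\alpha\boxtimes\beta).
\]

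Next I would exploit the Cartesian square
\[
\xymatrix{
X \ar[r]^-{\Gamma_f}\ar[d]_-{f} & X\times Y \ar[d]^-{f\times 1_Y}\\
Y \ar[r]_-{\Delta_Y} & Y\times Y
}
\]
where $\Gamma_f$ is the graph of $f$. The square is transverse: the fibre product has the expected dimension $d_X$, so $f\times 1_Y$ and $\Delta_Y$ are Tor-independent. Because $f$ is proper, so is $f\times 1_Y$, and the base change formula applies, giving $\Delta_Y^*\circ(f\times 1_Y)_* = f_*\circ\Gamma_f^*$. A direct computation of $\Gamma_f^*$ on an external product $\alpha\boxtimes\beta=p_X^*\alpha\cdot p_Y^*\beta$ yields $\Gamma_f^*(\alpha\boxtimes\beta)=\alpha\cdot f^*\beta$, and combining these equalities produces the first identity
\[
f_*(\alpha)\cdot\beta \;=\; f_*(\alpha\cdot f^*\beta).
\]

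For the second formula I would invoke the graded commutativity $\gamma\cdot\delta=\langle -1\rangle^{mn}\delta\cdot\gamma$ recalled after the definition of the cup product. Applying it twice (once on $Y$, once on $X$) and using that $f_*$ is $\GW$-linear in the sense that $f_*(\langle -1\rangle\cdot-)=\langle -1\rangle\cdot f_*(-)$ (a consequence of $f^*\langle -1\rangle=\langle -1\rangle$ together with the first projection formula applied to $\beta=\langle -1\rangle$, or directly at the level of Rost--Schmid cycles), the two signs $\langle -1\rangle^{mn}$ cancel and one recovers $\beta\cdot f_*(\alpha)=f_*(f^*\beta\cdot\alpha)$.

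The main obstacle I anticipate is bookkeeping for the line-bundle twists: one has to check that the canonical identifications of $\omega_{f\times 1_Y}$, $\omega_{\Gamma_f}$ and their composites are compatible with the chosen identification $\mathcal{N}\otimes\mathcal{L}\simeq\mathcal{L}\otimes\mathcal{N}$ used to define the cup product, so that the three uses of the base change formula, the external product, and $\Gamma_f^*$ produce an element of the expected twisted Chow--Witt group on the nose, and not merely up to an invisible sign. Once the twists are tracked carefully, the remaining verifications reduce to identities already established in the paper and in \cite{Fasel08a}.
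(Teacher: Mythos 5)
Your proof of the first identity is essentially the paper's own argument: the map $(1\times f)\Delta_X$ appearing in the paper's first Cartesian square is exactly the graph $\Gamma_f$, and the two inputs you use --- base change along that square and compatibility of the external product with proper push-forward --- are precisely the ones the paper cites. The Tor-independence check via the expected dimension of the fibre product is also fine, since all the schemes involved are smooth and $\Delta_Y$ is a regular immersion.

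The second identity is where you deviate, and the deviation contains a genuine gap. The paper proves $\beta\cdot f_*(\alpha)=f_*(f^*(\beta)\cdot\alpha)$ directly, by running the identical base-change argument on the symmetric Cartesian square with horizontal maps $(f\times 1)\Delta_X:X\to Y\times X$ and $\Delta_Y:Y\to Y\times Y$ and right vertical map $1\times f$; it never invokes commutativity. You instead deduce it from the first identity using $\gamma\cdot\delta=\langle-1\rangle^{mn}\delta\cdot\gamma$ and assert that ``the two signs $\langle-1\rangle^{mn}$ cancel''. They do not: $f_*(\alpha)$ lives in cohomological degree $m+d_Y-d_X$, not $m$, so commuting $\beta$ past $f_*(\alpha)$ on $Y$ costs $\langle-1\rangle^{n(m+d_Y-d_X)}$, while commuting $f^*(\beta)$ past $\alpha$ on $X$ costs $\langle-1\rangle^{nm}$. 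Your argument therefore only yields
\[
\beta\cdot f_*(\alpha)=\langle-1\rangle^{n(d_Y-d_X)}\,f_*(f^*(\beta)\cdot\alpha),
\]
and the unit $\langle-1\rangle^{n(d_Y-d_X)}$ is not $\langle 1\rangle$ whenever $n(d_Y-d_X)$ is odd (for instance $f$ a codimension-one closed immersion and $n$ odd); multiplication by $\langle-1\rangle$ acts nontrivially on Chow--Witt groups, already on $\GW(k)$. Since the corollary is stated for arbitrary proper $f$ and is later applied to projections $X\times Y\to X$, where $d_X\neq d_Y$, this residual factor cannot be absorbed into the line-bundle bookkeeping you mention. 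The repair is simply to prove the second identity symmetrically, as the paper does, rather than deriving it from the first.
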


\begin{proof}
It suffices to use the base change formula on the following Cartesian square:
\[
\xymatrix@C=4em{
X\ar[r]^-{(1\times f)\Delta_X}\ar[d]_-f & X\times Y\ar[d]^-{(f\times 1)} \\
Y\ar[r]_-{\Delta_Y} & Y\times Y
}
\]
to get $\Delta_Y^*(f\times 1)_*(\alpha\times\beta)=f_*(\alpha\cdot f^*\beta)$ and $\Delta_Y^*(1\times f)_*(\beta\times \alpha)=f_*(f^*\beta\cdot \alpha)$. The result now follows from the equality $(f\times 1)_*(\alpha\times\beta)=f_*\alpha\times \beta$ (\cite{Calmes17}).
\end{proof}

\begin{rem}
To obtain a formula for the left-module structure, we first observe that the push-forward makes sense for line bundles of the form $\omega_f\otimes f^*\mathcal L$. Now, $f^*(\beta)\cdot \alpha$ is a cycle in $\chst {m+n}{Z\cap f^{-1}(W)}X{f^*\mathcal N\otimes \omega_{f}\otimes f^*\L}$, which we have to transform in a cycle in $\chst {m+n}{Z\cap f^{-1}(W)}X{ \omega_{f}\otimes f^*(\L\otimes \mathcal N)}$. For this, we can use the isomorphism
\[
f^*\mathcal N\otimes \omega_{f}\otimes f^*\L\simeq \omega_{f}\otimes f^*\L\otimes f^*\mathcal N\simeq \omega_{f}\otimes f^*(\L\otimes \mathcal N)
\]
obtained from the switch isomorphism and the canonical isomorphism $f^*\L\otimes f^*\mathcal N\simeq f^*(\L\otimes \mathcal N)$. Under this identification, we have 
\[
f^*(\beta)\cdot \alpha=\langle (-1)^{mn}\rangle (\alpha\cdot f^*(\beta))=(\alpha\cdot f^*(\beta))\langle (-1)^{mn}\rangle.
\]
Now, 
\[
f_*(f^*(\beta)\cdot \alpha)=f_*((\alpha\cdot f^*(\beta))\langle (-1)^{mn}\rangle)=f_*(\alpha\cdot f^*(\beta\cdot \langle (-1)^{mn}\rangle))
\]
and the projection formula yields 
\[
f_*(\alpha\cdot f^*(\beta\cdot \langle (-1)^{mn}\rangle))=f_*(\alpha)\cdot \beta\cdot \langle (-1)^{mn}\rangle.
\]
The latter is seen as a cycle with orientation in the line bundle $\L\otimes \mathcal N$. Using the switch isomorphism $\L\otimes \mathcal N\simeq \mathcal N\otimes \L$ and the commutation formula once again, we get
\[
f_*(\alpha)\cdot \beta\cdot \langle (-1)^{mn}\rangle=\langle (-1)^{(m-c)n}\rangle  \beta\cdot \langle (-1)^{mn}\rangle\cdot f_*(\alpha)=\langle (-1)^{cn}\rangle \beta\cdot f_*(\alpha).
\]
Thus, we see that the projection formula reads as $f_*(f^*(\beta)\cdot \alpha)=\langle (-1)^{cn}\rangle \beta\cdot f_*(\alpha)$. At the risk of being annoying, let us stress once again that this formula depends on the choice of the two above isomorphisms of line bundles. 
\end{rem}

\begin{lem}[Flat excision]
Let $f:X\to Y$ be a flat morphism of smooth schemes. Let $V\subset Y$ be a closed subset such that the morphism $f^{-1}(V)\to V$ induced by $f$ is an isomorphism. Then the pull-back morphism
\[
f^*:\chst iV{Y}{\L}\to \chst i{f^{-1}(V)}X{f^*\L}
\]
is an isomorphism for any $i\in\N$ and any line bundle $\L$ over $Y$.
\end{lem}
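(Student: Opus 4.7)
The plan is to reduce the statement to an isomorphism of Rost–Schmid complexes. Since $k$ is perfect, the Rost–Schmid complex $C^\bullet(Y,\sKMW_n(\L))$ from \cite[Chapter 5]{Morel08} is a flabby resolution of $\sKMW_n(\L)$, so $\chst{i}{V}{Y}{\L}$ is the cohomology of the subcomplex supported on $V$, namely
\[
C^p_V(Y,\sKMW_n(\L))=\bigoplus_{y\in Y^{(p)},\,y\in V}\KMW_{n-p}\bigl(k(y),\,\Lambda_y\otimes\L(y)\bigr),
\]
where $\Lambda_y=\det(\mathfrak m_y/\mathfrak m_y^2)^\vee$. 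The analogous description holds for $C^\bullet_{f^{-1}(V)}(X,\sKMW_n(f^*\L))$, and flatness of $f$ provides a morphism of complexes $f^*$ between the two. I would prove that this morphism is already an isomorphism termwise, from which the conclusion is immediate.

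To check the termwise isomorphism, fix $y\in V\cap Y^{(p)}$. Since $f^{-1}(V)\to V$ is a scheme isomorphism, $y$ has a unique preimage $x\in f^{-1}(V)$ with $k(x)\cong k(y)$ canonically. Let $R=\O_{Y,y}$, $S=\O_{X,x}$, and let $I\subset R$ be the ideal cutting out $V$; the hypothesis gives $R/I\xrightarrow{\sim} S/IS$. Since $I\subset \mathfrak m_y$, quotienting by $\mathfrak m_y/I$ yields $S/\mathfrak m_y S\cong R/\mathfrak m_y=k(y)$. Combined with flatness of $R\to S$, this gives $\dim S=\dim R+\dim(S/\mathfrak m_y S)=\dim R$, so $x$ has codimension $p$ in $X$. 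Moreover $\mathfrak m_x=\mathfrak m_y S$, whence $\mathfrak m_x/\mathfrak m_x^2\cong \mathfrak m_y/\mathfrak m_y^2\otimes_{k(y)}k(x)$, and the twist lines $\Lambda_y\otimes\L(y)$ and $\Lambda_x\otimes f^*\L(x)$ are canonically identified. Finally, the flat pullback $f^*$ on the Rost–Schmid complex sends the $y$-summand to the sum over generic points of the fibre $X\times_Y\spec k(y)$, weighted by appropriate multiplicities; here the fibre is just $\spec k(y)$ of length one, so on the $y$-summand the map $f^*$ is simply the canonical identification of $\KMW$-groups through (i)–(iii).

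Granting the termwise bijection of summands and the matching of residue fields and twist lines, the differentials commute automatically with $f^*$ (by naturality of the Rost–Schmid differential under flat pullback), so $f^*$ is an isomorphism of complexes and hence of their cohomologies. The main technical point is to justify that the flat pullback on the Rost–Schmid complex has the simple ``single-preimage, no multiplicity'' description just invoked; this is where one must invoke (or establish) the appropriate functoriality properties of the Rost–Schmid complex with respect to flat morphisms, including the identification of orientation twists via $\mathfrak m_x=\mathfrak m_y S$. Once this bookkeeping is in place, the lemma follows formally from the hypothesis $f^{-1}(V)\cong V$.
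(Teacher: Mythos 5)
Your proof is correct and follows the same route as the paper's: both compute the Chow--Witt groups with supports as the cohomology of the subcomplex of the Rost--Schmid flabby resolution consisting of points lying in $V$ (resp.\ $f^{-1}(V)$) and observe that $f^*$ restricts to an isomorphism of these subcomplexes. The paper simply asserts this last point ``by definition,'' whereas you supply the termwise verification (unique preimage, preservation of codimension via flatness, identification of residue fields and conormal twists through $\mathfrak m_x=\mathfrak m_y S$).
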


\begin{proof}
As said in Section \ref{sec:chowwitt}, Chow-Witt groups can be computed using the flabby resolution provided by the Rost-Schmid complex of \cite[Chapter 5]{Morel08}, which coincide with the complex considered in \cite[D\'efinition 10.2.7]{Fasel08a}. Now Chow-Witt groups with supports are obtained by considering the subcomplex of points supported on a certain closed subset. The lemma follows now from the fact that in our case $f^*$ induces (by definition) an isomorphism of complexes. 
\end{proof}

We now consider the problem of describing the cohomology of $X\times \gm$ with coefficients in $\sKMW_j$ (for $j\in\Z$) in terms of the cohomology of $X$. First observe that the pull-back along the projection $p:X\times \gm\to X$ endows the cohomology of $X\times \gm$ with the structure of a module over the cohomology of $X$. Let $t$ be a parameter of $\gm$. The class $[t]$ in $\KMW_1(k(t))$ actually lives in its subgroup $\sKMW_1(\gm)$ since it clearly has trivial residues at all closed points of $\gm$. 
Pulling back to $X\times \gm$ along the projection to the second factor, we get an element in $\sKMW_1(X\times \gm)$ that we still denote by $[t]$.

\begin{lem}\label{lem:explicitcontraction}
For any $i\in\N$, any $j\in \Z$ and any smooth scheme $X$ over $k$, we have 
\[
\H^{i}(X\times \gm,\sKMW_j)=\H^i(X,\sKMW_j)\oplus \H^{i}(X,\sKMW_{j-1})\cdot [t].
\]
\end{lem}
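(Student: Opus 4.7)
I would prove this by a standard open-closed decomposition argument, comparing $X \times \gm$ with $X \times \A^1$ via the complement $Z := X \times \{0\}$, and then identifying the splitting explicitly by means of the unit section of $\gm$ and of the class $[t]$. Throughout, let $j : X \times \gm \hookrightarrow X \times \A^1$ and $\iota : Z \hookrightarrow X \times \A^1$ be the inclusions, $q : X \times \A^1 \to X$ and $p : X \times \gm \to X$ the projections, and $e : X \to X \times \gm$ the unit section $x \mapsto (x,1)$.

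\textbf{Reduction via the localization sequence.} The Nisnevich localization long exact sequence reads
\[
\cdots \to \H^i_Z(X \times \A^1, \sKMW_j) \xrightarrow{\iota_*} \H^i(X \times \A^1, \sKMW_j) \xrightarrow{j^*} \H^i(X \times \gm, \sKMW_j) \xrightarrow{\partial} \H^{i+1}_Z(X \times \A^1, \sKMW_j) \to \cdots
\]
Homotopy invariance (Corollaire 11.3.2 of \cite{Fasel08a}) identifies $q^* : \H^i(X, \sKMW_j) \xrightarrow{\sim} \H^i(X \times \A^1, \sKMW_j)$, and Rost-Schmid purity for the codimension-one closed immersion $\iota$, whose normal bundle is canonically trivial, yields $\H^i_Z(X \times \A^1, \sKMW_j) \cong \H^{i-1}(X, \sKMW_{j-1})$ with no twist. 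Since $q \circ j \circ e = \id_X$, we have $e^* j^* q^* = \id$, so $q^*$ is split by $e^* j^*$; combined with $j^* \iota_* = 0$ this forces $\iota_* = 0$. The long exact sequence therefore breaks into short exact sequences
\[
0 \to \H^i(X, \sKMW_j) \xrightarrow{p^*} \H^i(X \times \gm, \sKMW_j) \xrightarrow{\bar\partial} \H^i(X, \sKMW_{j-1}) \to 0,
\]
which are split by $e^* p^* = \id$.

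\textbf{Identification of the second summand with $\H^i(X, \sKMW_{j-1}) \cdot [t]$.} Define $\sigma : \H^i(X, \sKMW_{j-1}) \to \H^i(X \times \gm, \sKMW_j)$ by $\sigma(\alpha) = p^*(\alpha) \cdot [t]$. Since $e^*([t]) = [1] = 0$ in $\sKMW_1(k)$ (an immediate consequence of relation (ii) with $a=b=1$), the image of $\sigma$ lies in $\ker e^*$, the complement of $p^*\H^i(X, \sKMW_j)$ in the splitting above. It remains to show $\bar\partial \circ \sigma = \id$: unwinding the purity isomorphism through the Rost-Schmid complex identifies $\bar\partial$ with the residue $\partial_v^t$ at the generic point of $Z$ (with uniformizer $t$) pre-composed with $(q^*)^{-1}$, and the defining property of the residue gives $\partial_v^t([t]) = 1 \in \sKMW_0(k)$; compatibility of $\partial_v^t$ with multiplication by classes of the form $p^*(\alpha)$, which are ``unramified at $Z$'', yields $\bar\partial(p^*(\alpha) \cdot [t]) = \alpha$.

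\textbf{Main obstacle.} The routine part is the LES plus homotopy invariance plus purity; the delicate point is the last identification: one must trace through the chain of isomorphisms (homotopy invariance, purity, connecting map) in the Rost-Schmid model to confirm that $\bar\partial$ is genuinely the residue at $t=0$ with the right sign convention, and to verify the Leibniz-type formula $\partial_v^t(p^*(\alpha) \cdot [t]) = p^*(\alpha)|_Z \cdot \partial_v^t([t])$ for classes pulled back from $X$. Both facts are essentially built into the Rost-Schmid construction but require some bookkeeping of the trivializations of the orientation line bundle along $Z$.
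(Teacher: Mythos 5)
Your proof is correct and follows essentially the same route as the paper: the localization sequence for $X\times\gm\subset X\times\A^1$, homotopy invariance, the splitting via the unit section, purity (dévissage) along $X\times\{0\}$ with its trivialized normal bundle, and the identification of the section with multiplication by $[t]$ via the residue computation $\partial_v^t([t]\cdot p^*\alpha)=\alpha$. The only difference is cosmetic: the paper delegates your "main obstacle" (the residue/Leibniz bookkeeping) to Morel's Proposition 3.17(2), whereas you sketch it directly.
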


\begin{proof}
The long exact sequence associated to the open immersion $X\times \gm\subset X\times \A^1_k$ reads as
\[
\cdots\to \H^i(X\times\A^1_k,\sKMW_j)\to \H^i(X\times\gm,\sKMW_j)\stackrel\partial\to \H^{i+1}_{X\times \{0\}}(X\times \A^1,\sKMW_j)\to \cdots
\] 
By homotopy invariance, the pull-back along the projection to the first factor $X\times\A^1_k\to X$ induces an isomorphism $\H^i(X,\sKMW_j)\to \H^i(X\times\A^1_k,\sKMW_j)$. Pulling-back along the morphism $X\to X\times\gm$ defined by $x\mapsto (x,1)$ we get a retraction of the composite homomorphism
\[
\H^i(X,\sKMW_j)\to \H^i(X\times\A^1_k,\sKMW_j)\to \H^i(X\times\gm,\sKMW_j)
\]
and it follows that the long exact sequence splits into short split exact sequences
\[
0\to \H^i(X\times\A^1_k,\sKMW_j)\to \H^i(X\times\gm,\sKMW_j)\stackrel\partial\to \H^{i+1}_{X\times \{0\}}(X\times \A^1,\sKMW_j)\to 0
\] 
Now the push-forward homomorphism (together with the obvious trivialization of the normal bundle to $X\times\{0\}$ in $X\times\A^1_k$) yields an isomorphism \cite[Remarque 10.4.8]{Fasel08a}
\[
\iota:\H^i(X,\sKMW_{j-1})\to \H^{i+1}_{X\times \{0\}}(X\times \A^1,\sKMW_j)
\]
and it suffices then to check that the composite 
\[
\H^i(X,\sKMW_{j-1})\stackrel{[t]}\to \H^i(X,\sKMW_{j})\to \H^i(X\times \gm,\sKMW_{j})\stackrel{\iota^{-1}\partial}\to \H^i(X,\sKMW_{j-1}) 
\]
is an isomorphism to conclude. This follows essentially from \cite[Proposition 3.17. 2)]{Morel08}.
\end{proof}

\begin{rem} \label{rem:ttozero}
By pull-back along the morphism $\spec k \to \gm$ sending the point to $1$, the element $[t] \in \sKMW_1(\gm)$ maps to $[1]=0$ in $\sKMW_1(k)$. Therefore this pull-back gives the splitting of $\H^i(X,\sKMW_j)$ in the decomposition above. 
\end{rem}


\section{Finite MW-correspondences}

\subsection{Admissible subsets}\label{sec:fGW}

Let $X$ and $Y$ be smooth schemes over $\spec k$ and let $T\subset X\times Y$ be a closed subset. Any irreducible component of $T$ maps to an irreducible component of $X$ through the projection $X \times Y \to X$. 
\begin{defin}
If, when $T$ is endowed with its reduced structure, this map is finite and surjective for every irreducible component of $T$, we say that $T$ is an \emph{admissible subset} of $X\times Y$. 
We denote by $\Adm(X,Y)$ the set of admissible subsets of $X\times Y$, partially ordered by inclusions. As usual, we sometimes consider $\Adm(X,Y)$ as a category.
\end{defin}

\begin{rem} \label{rem:admissible}
Since the empty set has no irreducible component, it is admissible.
An irreducible component of an admissible subset is clearly admissible, and the irreducible admissible subsets are minimal (non-trivial) elements in $\Adm (X,Y)$. Furthermore, any finite union of admissible subsets is admissible.
\end{rem}

\begin{lem} \label{lem:admissiblesheaf}
If $f:X' \to X$ is a morphism between smooth schemes, then $T \mapsto (f \times \id_Y)^{-1}(T)$ defines a map $\Adm(X,Y) \to \Adm(X',Y)$. Furthermore, the presheaf $U \mapsto \Adm(U,Y)$ thus defined is a sheaf for the Zariski topology. 
\end{lem}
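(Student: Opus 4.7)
The statement consists of two claims: that $(f\times\id_Y)^{-1}$ preserves admissibility, and that the assignment $U\mapsto \Adm(U,Y)$ is a Zariski sheaf.

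For the first claim, I decompose $T=\bigcup_l T_l$ into its irreducible components; since $(f\times\id_Y)^{-1}(T)=\bigcup_l(f\times\id_Y)^{-1}(T_l)$ and every irreducible closed subset of a finite union of closed sets lies in one of the pieces, it suffices to treat the case where $T$ is irreducible, finite and surjective onto a single component $X_i$ of $X$. Smoothness of $X$ makes $X_i$ clopen, so $f^{-1}(X_i)$ is clopen in $X'$ and decomposes as a disjoint union $\bigsqcup_j X'_j$ of irreducible components of $X'$. Set-theoretically, the pullback $(f\times\id_Y)^{-1}(T)$ coincides with the support of the scheme-theoretic fiber product $X'\times_X T=\bigsqcup_j X'_j\times_{X_i}T$, and this is finite over $X'$ as a base change of the finite morphism $T\to X$. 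For each $j$, any irreducible component $W$ of $(X'_j\times_{X_i}T)_{\mathrm{red}}$ maps finitely, with closed image $p_1(W)\subseteq X'_j$, under the first projection. The crux of the argument is to show $p_1(W)=X'_j$ for every such $W$, equivalently that the base change $X'_j\times_{X_i}T$ is equidimensional of dimension $\dim X'_j$ with no spurious components supported over proper closed subsets of $X'_j$. I expect this to be the hard step; it should follow from regularity of $X_i$ together with equidimensionality of the finite surjection $T\to X_i$ from the integral scheme $T$, either via generic flatness of $T\to X_i$ combined with a dimension count, or by invoking the Suslin--Voevodsky theory of pullbacks of relative $0$-cycles.

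For the sheaf property, I use that closed subsets of $U\times Y$ already form a Zariski sheaf in $U$, so it suffices to show admissibility is a local condition on $U$. Given a cover $\{U_\alpha\}$ of $U$ and a closed $T\subseteq U\times Y$, decompose $T=\bigcup_k T_k$ into irreducible components. Each intersection $T_k\cap(U_\alpha\times Y)$ is closed in $U_\alpha\times Y$ and open in $T_k$, hence empty or irreducible; in the latter case, a closure argument shows that if a strictly larger irreducible closed subset $T'\supsetneq T_k\cap(U_\alpha\times Y)$ existed inside $T|_{U_\alpha\times Y}$ and were contained in some other $T_l\cap(U_\alpha\times Y)$, then taking closures in $U\times Y$ would force $T_k\subseteq T_l$, a contradiction. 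Hence the components of $T|_{U_\alpha\times Y}$ are precisely the nonempty $T_k\cap(U_\alpha\times Y)$. Similarly, the components of the smooth $U_\alpha$ are precisely the nonempty intersections $U_\alpha\cap U_{\sigma(k)}$ with components $U_{\sigma(k)}$ of $U$. Combined with the fact that both finiteness and surjectivity of morphisms of schemes are local on the target, this shows that admissibility of $T$ and of each $T|_{U_\alpha\times Y}$ are equivalent; descent of a compatible family then glues to an admissible subset on $U$.

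The main obstacle is the equidimensionality step in the first part: without it, one could in principle have components of $X'_j\times_{X_i}T$ supported over a proper closed subvariety of $X'_j$, which would break admissibility. Once this geometric input is secured, all subsequent steps are essentially formal manipulations.
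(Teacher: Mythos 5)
Your plan has the same two\,-step structure as the paper's proof (stability of finiteness and surjectivity under base change for the first claim; Zariski\,-locality of closedness, finiteness and surjectivity for the second), but the honest content of the first claim is exactly the point you flag and then leave open: why does every irreducible component $W$ of $X'_j\times_{X_i}T$ surject onto $X'_j$? As written, your proposal does not prove this; ``generic flatness plus a dimension count'' will not do it by itself, and ``I expect this to be the hard step; it should follow from\dots'' is where the proof is missing. The argument that closes it is the one behind \cite[Lemma 1.6]{Mazza06}, which the paper itself invokes for precisely this statement in Example \ref{ex:flat_example}: factor $f$ through its graph $X'_j\to X'_j\times X_i\to X_i$. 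Since $X_i$ is smooth, $\Gamma_f$ is a section of a smooth morphism of relative dimension $\dim X_i$, hence a regular closed immersion of codimension $\dim X_i$; therefore $X'_j\times_{X_i}T$, being the preimage of $\Gamma_f$ in $X'_j\times T$, is locally cut out by $\dim X_i$ equations, and Krull's height theorem gives $\dim W\ge \dim X'_j+\dim T-\dim X_i=\dim X'_j$ for every irreducible component $W$ (here $\dim T=\dim X_i$ because $T$ is irreducible, finite and surjective over $X_i$). Finiteness over $X'_j$ gives the reverse inequality, so the image of $W$ is a closed irreducible subset of $X'_j$ of full dimension, i.e.\ all of $X'_j$. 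For what it is worth, the paper's own proof is even terser than yours: it only cites base\,-change stability of finiteness and surjectivity, which controls the preimage of $T$ as a whole but not its individual components. Your instinct that this is the crux is therefore correct; you just need to actually supply the argument rather than defer it.

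The sheaf\,-theoretic half of your proposal is correct and agrees with the paper's: closed subsets glue, and finiteness and surjectivity are local on the target. Your verification that the irreducible components of $T|_{U_\alpha\times Y}$ are exactly the nonempty traces $T_k\cap(U_\alpha\times Y)$ is a worthwhile point of care that the paper leaves implicit (the admissibility condition is stated component by component, so one does need it). The only remaining micro\,-step in the gluing direction is that for a glued $T$ and a component $T_k$ lying over a component $U_i$ of $U$, the image of $T_k$ is closed (finiteness is local on the target, so $T_k\to U$ is finite) and contains a dense open subset of $U_i$ (namely the component of some $U_\alpha$ onto which a nonempty trace of $T_k$ surjects), hence equals $U_i$.
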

\begin{proof}
Finiteness and surjectivity are stable by base change by \cite[6.1.5]{EGAI} and \cite[3.5.2]{EGAII}, so the map is well-defined. The injectivity condition in the sheaf sequence is obvious. To prove the exactness in the middle, being closed is obviously a Zariski local property, so the union of the closed subsets in the covering defines a global closed subset. Both finiteness and surjectivity are properties that are Zariski local on the base, so this closed subset is admissible. 
\end{proof}

If $Y$ is equidimensional, $d=\dim Y$ and $p_Y:X\times Y\to Y$ is the projection, we define a covariant functor 
\[
\Adm(X,Y)\to {\mathcal Ab}
\]
by associating to each admissible subset $T\in \Adm(X,Y)$ the group $\chst dT{X\times Y}{p_Y^*\omega_{Y/k}}$ and to each morphism $T^\prime\subset T$ the extension of support homomorphism 
\[
\chst d{T^\prime}{X\times Y}{p_Y^*\omega_{Y/k}}\to \chst dT{X\times Y}{p_Y^*\omega_{Y/k}}
\]
and, using that functor, we set
\[
\cor k(X,Y)=\varinjlim_{T\in \Adm(X,Y)}\chst dT{X\times Y}{p_Y^*\omega_{Y/k}}.
\]
If $Y$ is not equidimensional, then $Y=\coprod_jY_j$ with each $Y_j$ equidimensional and we set
\[
\cor k(X,Y)=\prod_{j} \cor k(X,Y_j).
\]
By additivity of Chow-Witt groups, if $X =\coprod_iX_i$ and $Y=\coprod_j Y_j$ are the respective decompositions of $X$ and $Y$ in irreducible components, we have
\[
\cor k(X,Y)=\prod_{i,j} \cor k(X_i,Y_j).
\]

\begin{nota}\label{nota:omega}
In the sequel, we will simply write $\omega_Y$ in place of $p_Y^*\omega_{Y/k}$. In case of possible confusion, we will switch back to the complete notation.
\end{nota}

\begin{exem}\label{ex:basic}
Let $X$ be a smooth scheme of dimension $d$. Then 
\[
\cor k({\spec k},X)=\bigoplus_{x\in X^{(d)}} \chst d{\{x\}}X{\omega_X}=\bigoplus_{x\in X^{(d)}} \GW(k(x),\omega_{k(x)/k}).
\]
On the other hand, $\cor k(X,\spec k)=\ch 0X=\sKMW_0(X)$ for any smooth scheme $X$.
\end{exem}

The group $\cor k(X,Y)$ admits an alternate description which is often useful. Let $X$ and $Y$ be smooth schemes, with $Y$ equidimensional. For any closed subscheme $T\subset X\times Y$ of codimension $d=\dim Y$, we have an inclusion 
\[
\chst dT{X\times Y}{{\omega_{Y}}}\hspace{1ex}\subset \bigoplus_{x\in (X\times Y)^{(d)}}\hspace{-2ex}\KMW_0(k(x),\omega_x\otimes ({\omega_{Y}})_x).
\]
and it follows that
\[
\cor k(X,Y)=\hspace{-3ex}\bigcup_{T\in \Adm(X,Y)}\hspace{-3ex}\chst dT{X\times Y}{{\omega_{Y}}}\hspace{1ex}\subset \hspace{-2ex}\bigoplus_{x\in (X\times Y)^{(d)}}\hspace{-3ex}\KMW_0(k(x),\omega_x\otimes ({\omega_{Y}})_x).
\]
In general, the inclusion $\cor k(X,Y)\subset \bigoplus_{x\in (X\times Y)^{(d)}}\KMW_0(k(x),\omega_x\otimes ({\omega_{Y}})_x)$ is strict as shown by Example \ref{ex:basic}. As an immediate consequence of this description, we see that the map
\[
\chst dT{X\times Y}{{\omega_{Y}}}\to \cor k(X,Y)
\]
is injective for any $T\in \Adm(X,Y)$. 

If $U$ is an open subset of a smooth scheme $V$, since an admissible subset $T \in \Adm(V,Y)$ intersects with $U \times Y$ as an admissible subset by Lemma \ref{lem:admissiblesheaf}, the pull-backs along $V \times Y \to U \times Y$ on Chow-Witt groups with support induce at the limit a map $\cor k(V,Y) \to \cor k(U,Y)$.

\begin{lem} \label{lem:restrictioninj}
This map is injective.
\end{lem}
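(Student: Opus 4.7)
The plan is to pass to the filtered colimit defining $\cor k(V,Y)$ and $\cor k(U,Y)$, and show that for each admissible $T \in \Adm(V,Y)$ the pull-back
\[
\chst d{T}{V\times Y}{\omega_{V\times Y/V}} \longrightarrow \chst d{T \cap (U\times Y)}{U\times Y}{\omega_{U\times Y/U}}
\]
along the open immersion $U \times Y \hookrightarrow V \times Y$ is already injective; since filtered colimits preserve injectivity this will suffice.

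To handle a fixed $T$ I would appeal to the alternate description recalled immediately before the lemma: computed via the Rost--Schmid complex, $\chst d{T}{V\times Y}{\omega_{V\times Y/V}}$ embeds canonically into $\bigoplus_{x \in (V\times Y)^{(d)}} \KMW_0(k(x), \omega_x \otimes (\omega_{V\times Y/V})_x)$, with nonzero contributions only at the generic points of the irreducible components of $T$. At this level the pull-back along an open immersion is the literal projection onto the summands whose indexing codimension $d$ point lies in $U \times Y$. The whole question therefore reduces to verifying that every generic point $\xi$ of an irreducible component $T'$ of $T$ already belongs to $U \times Y$.

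This is the key step, and the place where a small but genuine subtlety hides. By admissibility, the composition $T' \hookrightarrow V \times Y \to V$ is finite and surjective onto some irreducible component $V'$ of $V$, so $\xi$ maps to the generic point $\eta$ of $V'$; under the standing hypothesis that $U$ meets every component of $V$ (a hypothesis whose necessity is visible from the decomposition $\cor k(V,Y) = \prod_{i,j} \cor k(V_i, Y_j)$ when $V$ is disconnected), the open subset $U \cap V'$ of the irreducible $V'$ is nonempty and hence contains $\eta$, so $\xi \in U \times Y$. The pull-back then acts as the identity on the $\xi$-summand, which gives injectivity for each $T$; taking the filtered colimit over $\Adm(V,Y)$ yields the lemma. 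The only real obstacle I foresee is the bookkeeping around the irreducible decomposition of $V$ and making the implicit density hypothesis explicit; once this is set, the Rost--Schmid computation is entirely formal.
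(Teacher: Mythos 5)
Your argument is correct and is essentially the paper's own proof: the paper packages your generic-point observation as the statement that $T\cap\bigl((V\setminus U)\times Y\bigr)$ has codimension at least $d+1$ (because $T$ is finite and surjective over $V$), so that $\chst d{Z\cap T}{V\times Y}{\ome{V\times Y}V}=0$ and the localization sequence gives injectivity of each $\chst dT{V\times Y}{\ome{V\times Y}V}\to \chst d{T\cap(U\times Y)}{U\times Y}{\ome{U\times Y}U}$ before passing to the filtered colimit, which is exactly your cycle-level projection argument in disguise. Your observation that $U$ must meet every irreducible component of $V$ is a fair one, but it is equally implicit in the paper's codimension count, so it does not distinguish the two arguments.
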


\begin{proof}
We can assume $Y$ equidimensional, of dimension $d$. Let $Z=(V\setminus U)\times Y$. Let moreover $T\subset V\times Y$ be an admissible subset. Since $T$ is finite and surjective over $V$, the subset $Z\cap T$ is of codimension at least $d+1$ in $V\times Y$, which implies that $\chst d{Z\cap T}{V\times Y}{\omega_{Y}}=0$. The long exact sequence of localization with support then shows that the homomorphism
\[
\chst d{T}{V\times Y}{\omega_Y}\to \chst d{T\cap (U\times Y)}{U\times Y}{\omega_Y}
\]
is injective. On the other hand, we have a commutative diagram
\[
\xymatrix{
\chst d{T}{V\times Y}{\omega_Y}\ar@{^{(}->}[r]\ar@{^{(}->}[d] & \chst d{T\cap (U\times Y)}{U\times Y}{\omega_Y}\ar@{^{(}->}[d] \\
\cor k(V,Y)\ar[r] & \cor k(U,Y)
}
\]
with injective vertical maps. Since any $\alpha\in \cor k(V,X)$ comes from the group $\chst d{T}{V\times Y}{\omega_Y}$ for some $T\in \Adm(X,Y)$, the homomorphism $\cor k(V,Y)\to \cor k(U,Y)$ is injective. 
\end{proof}

\begin{defin}\label{def:support}
Let $\alpha\in \cor k(X,Y)$, where $X$ and $Y$ are smooth. If $Y$ is equidimensional, let $d=\dim(Y)$. The \emph{support} of $\alpha$ is the closure of the set of points $x\in (X\times Y)^{(d)}$ such that the component of $\alpha$ in $\KMW_0\big(k(x),\omega_x\otimes ({\omega_{Y}})_x\big)$ is nonzero. 
If $Y$ is not equidimensional, then we define the support of $\alpha$ as the union of the supports of the components appearing in the equidimensional decomposition. 
\end{defin}

\begin{lem} \label{lem:supportadmis}
The support of an $\alpha \in \cor k(X,Y)$ is an admissible subset, say $T$, and $\alpha$ is then in the image of the inclusion $\chst dT{X\times Y}{\omega_{Y}} \subset \cor k(X,Y)$.
\end{lem}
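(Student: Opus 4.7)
The plan is to use the Rost--Schmid flabby resolution of $\sKMW_d(\omega_{X\times Y/X})$ to realise $\alpha$ explicitly and read off its support from the component description. I may assume $X$ and $Y$ irreducible of respective dimensions $n$ and $d$, the general case following by additivity along connected components. By definition of $\cor k(X,Y)$ as a filtered colimit, $\alpha$ is the image of some $\tilde\alpha \in \chst d{T'}{X\times Y}{\omega_{X\times Y/X}}$ for an admissible subset $T' \subset X\times Y$. Every irreducible component of $T'$ is finite and surjective over $X$, hence of dimension $n$, so $T'$ is of pure codimension $d$ in $X\times Y$.

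By Section \ref{sec:chowwitt}, the group $\chst d{T'}{X\times Y}{\omega_{X\times Y/X}}$ is the kernel of the Rost--Schmid differential
\[
\bigoplus_{x\in T' \cap (X\times Y)^{(d)}} \KMW_0(k(x), \omega_x \otimes (\omega_{X\times Y/X})_x) \longrightarrow \bigoplus_{y\in T' \cap (X\times Y)^{(d+1)}} \KMW_{-1}(k(y), \ldots),
\]
with no contribution in codimension $d-1$, since $T'$ is of pure codimension $d$. The indexing set on the left is precisely the set of generic points of the irreducible components of $T'$, and $\tilde\alpha$ decomposes as a tuple $(\alpha_\eta)_\eta$ indexed by these generic points. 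By Definition \ref{def:support}, the support $T$ of $\alpha$ is the union of closures of those $\eta$ for which $\alpha_\eta\neq 0$; it is thus a finite union of irreducible components of $T'$, and by Remark \ref{rem:admissible} it is admissible.

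It remains to check that $\tilde\alpha$ comes from $\chst d{T}{X\times Y}{\omega_{X\times Y/X}}$. Since $\alpha_\eta=0$ for $\eta\notin T$, $\tilde\alpha$ already lies in the sub-direct sum indexed by the generic points of $T$. Because the Rost--Schmid differential at such a generic point is a sum of residues along codimension one points of $\overline{\{\eta\}}\subset T$, the complex with support $T$ is a subcomplex of the one with support $T'$, and the kernel condition in the larger complex restricts to the kernel condition in the smaller one. Hence $\tilde\alpha$ defines a class in $\chst d{T}{X\times Y}{\omega_{X\times Y/X}}$ whose image in $\cor k(X,Y)$ is $\alpha$. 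The one point demanding a little care is the bookkeeping with the Rost--Schmid differential when shrinking the support, but this is immediate from its explicit residue description.
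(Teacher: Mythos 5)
Your proof is correct, and for the first assertion it is essentially the paper's argument verbatim: $\alpha$ comes from some admissible $T'$, whose irreducible components all have codimension $d$, so the support is the union of those components on whose generic points $\alpha$ is nonzero, hence admissible by Remark \ref{rem:admissible}. For the second assertion you argue directly on the Rost--Schmid complex, whereas the paper restricts to the open complement $U=X\times Y\setminus T$ and invokes exactness of the localization sequence
\[
\chst dT{X\times Y}{\omega_{X\times Y/X}}\to \chst d{T'}{X\times Y}{\omega_{X\times Y/X}}\to \chst d{T'\setminus T}{U}{(\omega_{X\times Y/X})|_U}
\]
together with the injection of the last group into the direct sum over codimension-$d$ points of $U$. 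The two arguments are the same computation packaged differently: since $T$ and $T'$ are both of pure codimension $d$, both groups are literally kernels of the residue differential on the corresponding sets of codimension-$d$ points, and the residues emanating from a generic point $\eta$ of $T$ only land on points of $\overline{\{\eta\}}\subset T$, so the kernel condition does restrict as you claim. There is no gap; your version just unwinds the localization sequence explicitly.
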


\begin{proof}
By definition of $\cor k(X,Y)$ as a direct limit, the support of $\alpha$ is included in some admissible subset $T \in \Adm(X,Y)$. Being finite and surjective over $X$, any irreducible component $T_i$ of $T$ is of codimension $\dim Y$ in $X \times Y$. Therefore the support of $\alpha$ is exactly the union of all $T_i$ such that the component of $\alpha$ on the generic point of $T_i$ is non-zero. This is an admissible subset by Remark \ref{rem:admissible}. 

To obtain the last part of the statement, let $S \subset T$ be the support of $\alpha$ and let $U$ be the open subscheme $X \times Y \setminus S$. Consider the commutative diagram 
\[
\xymatrix@C=3ex{
 \chst dT {X \times Y}{\omega_{Y}} \ar[r] \ar[d] & \chst d{T \setminus S} {U}{(\omega_{Y})_{|U}} \ar[d] \\
 \hspace{-5ex}{\displaystyle\bigoplus_{x\in (X\times Y)^{(d)}\cap T}}\hspace{-4.5ex}\KMW_0(k(x),\omega_x\otimes ({\omega_{Y}})_x) \ar[r] & \hspace{-3ex}{\displaystyle\bigoplus_{x\in U^{(d)}\cap T}}\hspace{-2.5ex}\KMW_0(k(x),\omega_x\otimes ({\omega_{Y}})_x)
}
\]
with injective vertical maps (still for dimensional reasons). By definition of the support, $\alpha$ maps to zero in the lower right group, so it maps to zero in the upper right one. Therefore, it comes from the previous group in the localization exact sequence for Chow groups with support, and this group is $\chst dS{X\times Y}{\omega_{Y}}$. 
\end{proof}

\begin{exem}
In contrast with usual correspondences, an element of $\cor k(X,Y)$ cannot be in general written as the sum of elements with irreducible support. Indeed, let $X=Y=\A^1$. Let moreover $T_1=\{x=y\}\subset \A^1\times \A^1\}$ and $T_2=\{x=-y\}\subset \A^1\times \A^1$. Then $T_1\cap T_2=0\in \A^1\times \A^1$. We can consider $\langle x\rangle\otimes \overline{(x-y)}$ in $\KMW_0(k(T_1),{\mathfrak m}_{T_1}/{\mathfrak m}_{T_1}^2)$ and $\langle x\rangle\otimes \overline{(x+y)}$ in $\KMW_0(k(T_2),{\mathfrak m}_{T_2}/{\mathfrak m}_{T_1}^2)$. The residue of the first one is 
\[
\langle 1\rangle\otimes \overline{(x-y)}\wedge \overline x=\langle 1\rangle\otimes -\overline y\wedge \overline x=\langle -1\rangle \otimes \overline y\wedge \overline x 
\]
in $\KMW_{-1}(k,\wedge^2{\mathfrak m}_0/{\mathfrak m}_0^2)$, while the residue of the second one is
\[
\langle 1\rangle\otimes \overline{(x+y)}\wedge \overline x=\langle 1\rangle \otimes \overline y\wedge \overline x
\]
in the same group. As $\langle -1\rangle+\langle 1\rangle=0\in \KMW_{-1}(k)$, it follows that the sum of the two elements above define an unramified element in $ \chs 1{T_1\cup T_2}{\A^1\times \A^1}$. As the canonical sheaf $\omega_{\A^1}$ is trivial, we obtain an element of $\cor k(\A^1,\A^1)$ which is not the sum of elements with irreducible support (each component is ramified).
\end{exem}

Let $\alpha \in \cor k(X,Y)$ with support $T$ be restricted to an element denoted by $\alpha_{|U} \in \cor k(U,Y)$.  
\begin{lem} \label{lem:supporttoopen}
The support of $\alpha_{|U}$ is $T \cap U$, in other words the image of $T$ by the map $\Adm(X,Y) \to \Adm(U,Y)$. 
\end{lem}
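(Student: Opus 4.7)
The plan is to unfold the definition of support via the embedding
\[
\cor k(X,Y) \hookrightarrow \bigoplus_{x\in (X\times Y)^{(d)}} \KMW_0\bigl(k(x),\omega_x\otimes (\omega_{X\times Y/X})_x\bigr)
\]
recalled just before Lemma \ref{lem:restrictioninj}. Assume $Y$ is equidimensional of dimension $d$; the general case reduces to this one componentwise. First I would verify that the restriction map $\cor k(X,Y)\to \cor k(U,Y)$, which by construction is induced by pullback along the open immersion $j=\mathrm{id}_U\times\mathrm{id}_Y : U\times Y \hookrightarrow X\times Y$, acts on the direct sum above simply by discarding the summands indexed by codimension-$d$ points $x\notin U\times Y$ and leaving the remaining components unchanged. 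This is immediate from the construction of pullbacks along open immersions on the Rost-Schmid complex used to compute $\chst{d}{T}{X\times Y}{\omega_{X\times Y/X}}$.

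Next, by Lemma \ref{lem:supportadmis}, the irreducible components of $T$ are exactly the closures $T_i = \overline{\{\eta_i\}}$ of those points $\eta_i\in (X\times Y)^{(d)}$ at which the component of $\alpha$ in the direct sum is nonzero. By the previous step, the component of $\alpha_{|U}$ at an $\eta_i$ is nonzero if and only if $\eta_i\in U\times Y$. The remaining point is to identify, for each $i$, the condition $\eta_i\in U\times Y$ with the condition $T_i\cap (U\times Y)\neq\emptyset$. This is a basic closure argument: if $\eta_i\notin U\times Y$, then $\eta_i\in (X\setminus U)\times Y$, which is closed, so $T_i=\overline{\{\eta_i\}}\subset (X\setminus U)\times Y$ and $T_i\cap(U\times Y)=\emptyset$; conversely, $\eta_i\in U\times Y$ forces $T_i\cap(U\times Y)$ to be a nonempty open subset of $T_i$ containing $\eta_i$.

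It then follows that the set of codimension-$d$ points of $U\times Y$ where $\alpha_{|U}$ has a nonzero component is precisely $\{\eta_i \mid \eta_i\in U\times Y\}$, and its closure in $U\times Y$ is
\[
\bigcup_{\eta_i\in U\times Y} \overline{\{\eta_i\}}^{U\times Y} \;=\; \bigcup_{\eta_i\in U\times Y} \bigl(T_i\cap(U\times Y)\bigr) \;=\; T\cap (U\times Y),
\]
which is exactly the image of $T$ under $\Adm(X,Y)\to\Adm(U,Y)$ from Lemma \ref{lem:admissiblesheaf}. The main, rather minor, obstacle is the bookkeeping of generic points versus closed components when passing from $X\times Y$ to the open subscheme $U\times Y$; once this is handled, the argument is purely formal.
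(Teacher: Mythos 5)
Your argument is correct and is exactly the unfolding of Definition \ref{def:support} that the paper has in mind when it declares the lemma ``straightforward from the definition of the support'': the restriction map discards the $\KMW_0$-components at codimension-$d$ points outside $U\times Y$, and the closure bookkeeping (generic point $\eta_i$ lies in $U\times Y$ iff $T_i\cap(U\times Y)\neq\emptyset$, in which case the closure of $\{\eta_i\}$ in $U\times Y$ is $T_i\cap(U\times Y)$) is handled correctly. No gap.
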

\begin{proof}
It is straightforward from the definition of the support.
\end{proof}

\subsection{Composition of finite MW-correspondences}

Let $X$, $Y$ and $Z$ be smooth schemes of respective dimensions $d_X,d_Y$ and $d_Z$, with $X$ and $Y$ connected. Let $V\in \Adm(X,Y)$ and $T\in \Adm(Y,Z)$ be admissible subsets. Consider the following commutative diagram where all maps are canonical projections:
\begin{equation}\label{eqn:composition}
\begin{gathered}
\xymatrix{
X\times Z \ar@/^2em/[rrrd]^-{r_Z}\ar@/_1.3em/[rddd]_-{p_X} & & & \\
 & X\times Y\times Z\ar[r]^-{q_{YZ}}\ar[d]_-{p_{XY}}\ar[lu]_-{q_{XZ}} & Y\times Z\ar[r]^-{q_Z}\ar[d]^-{p_Y} & Z \\
 & X\times Y\ar[r]_-{q_Y}\ar[d]_-p & Y & \\
 & X & & 
}
\end{gathered}
\end{equation}
We have homomorphisms
\[
(p_{XY})^*:\chst {d_Y}V{X\times Y}{{\omega_{Y}}}\to \chst {d_Y}{(p_{XY})^{-1}V}{X\times Y\times Z}{(p_{XY})^*{\omega_{Y}}}
\]
and
\[
(q_{YZ})^*:\chst {d_Z}T{Y\times Z}{{\omega_{Z}}}\to \chst {d_Z}{(q_{YZ})^{-1}T}{X\times Y\times Z}{(q_{YZ})^*{\omega_{Z}}}.
\]
Let $M=(p_{XY})^{-1}V\cap (q_{YZ})^{-1}T$, endowed with its reduced structure. It follows from \cite[Lemmas 1.4 and 1.6]{Mazza06} that every irreducible component of $M$ is finite and surjective over $X$. As a consequence, the map $M\to q_{XZ}(M)$ is finite and the push-forward
\[
(q_{XZ})_*:\chst {d_Y+d_Z}{M}{X\times Y\times Z}{\omega_{X\times Y\times Z} \otimes q_{XZ}^*\L}\to \chst {d_Z}{q_{XZ}(M)}{X\times Z}{\omega_{X\times Z}\otimes \L}
\]
is well-defined for any line bundle $\L$ over $X\times Z$. In particular for $\L=p_X^*\omega_{X/k}^\vee$, we get a push-forward map
\[
\chst {d_Y+d_Z}{M}{X\times Y\times Z}{\omega_{X\times Y\times Z} \otimes (p_{XY})^*p^*\omega_{X/k}^\vee}\stackrel{(q_{XZ})_*}\to \chst {d_Z}{q_{XZ}(M)}{X\times Z}{\omega_{X\times Z}\otimes p_X^*\omega_{X/k}^\vee}.
\]
\begin{lem}
We have canonical isomorphisms 
\begin{equation}\label{eqn:side1}
\omega_{X\times Y\times Z} \otimes (p_{XY})^*p^*\omega_{X/k}^\vee\simeq (p_{XY})^*{\omega_{Y}}\otimes (q_{YZ})^*{\omega_{Z}}.
\end{equation}
and
\begin{equation}\label{eqn:side2}
\omega_{X\times Z}\otimes p_X^*\omega_{X/k}^\vee\simeq \omega_Z
\end{equation}
\end{lem}

\begin{proof}
We have a canonical isomorphism 
\[
\omega_{X\times Y\times Z}\simeq (p_{XY})^*p^*\omega_{X/k}\otimes (p_{XY})^*q_{Y}^*\omega_{Y/k}\otimes (q_{YZ})^*q_Z^*\omega_{Z/k}. 
\]
Next, we can consider $(-1)^{d_Xd_Y+d_Xd_Z}$ the switch isomorphism
\[
(p_{XY})^*p^*\omega_{X/k}\otimes (p_{XY})^*q_{Y}^*\omega_{Y/k}\otimes (q_{YZ})^*q_Z^*\omega_{Z/k}\simeq (p_{XY})^*q_{Y}^*\omega_{Y/k}\otimes (q_{YZ})^*q_Z^*\omega_{Z/k}\otimes (p_{XY})^*p^*\omega_{X/k} 
\]
Tensoring the two above isomorphisms by $\id_{\omega_{X/k}^\vee}$ and composing with the (pull-back of the) isomorphism $\omega_{X/k}\otimes \omega_{X/k}^\vee\simeq \O_X$, we obtain the isomorphism (\ref{eqn:side1}). For the second isomorphism, we use the canonical isomorphism
\[
\omega_{X\times Z}\simeq p_X^*\omega_{X/k}\otimes r_Z^*\omega_{Z/k}
\]
together with $(-1)^{d_Xd_Z}$ times the switch isomorphism 
\[
p_X^*\omega_{X/k}\otimes r_Z^*\omega_{Z/k}\simeq r_Z^*\omega_{Z/k}\otimes  p_X^*\omega_{X/k}
\]
and the (pull-back of the) canonical isomorphism $\omega_{X/k}\otimes \omega_{X/k}^\vee\simeq \O_X$.
\end{proof}

As a consequence, if we have cycles  $\beta\in  \chst {d_Y}V{X\times Y}{\omega_Y}$ and $\alpha\in \chst {d_Z}T{Y\times Z}{\omega_Z}$ the expression
\[
\alpha\circ \beta:=(q_{XZ})_*[(q_{YZ})^*\beta\cdot (p_{XY})^*\alpha]
\] 
is well-defined. Moreover, it follows from \cite[Lemma 1.7]{Mazza06} that $q_{XZ}(M)$ is an admissible subset of $X\times Z$.
All the above homomorphisms commute with extension of supports, and therefore we get a well-defined composition 
\[
\circ:\cor k(X,Y)\times \cor k(Y,Z)\to \cor k(X,Z).
\]

\begin{rem}\label{rem:everythingcommutes}
Note that in the definition of the composition we could have considered the product $(p_{XY})^*\alpha\cdot (q_{YZ})^*\beta$ in place of the product $(q_{YZ})^*\beta\cdot (p_{XY})^*\alpha$. We claim that this is the same. Indeed, note that 
\[
(p_{XY})^*\alpha\cdot (q_{YZ})^*\beta=\langle (-1)^{d_Yd_Z}\rangle (q_{YZ})^*\beta\cdot (p_{XY})^*\alpha
\]
by \cite[Remark 6.7]{Fasel07}. Here, let us again stress that this comparison is obtained using the switch isomorphism $(p_{XY})^*\omega_Z\otimes (q_{YZ})^*\omega_Y\simeq (q_{YZ})^*\omega_Y\otimes (p_{XY})^*\omega_Z$. However, the canonical isomorphism $(p_{XY})^*\omega_Z\otimes (q_{YZ})^*\omega_Y\simeq (q_{YZ})^*\omega_Y\otimes (p_{XY})^*\omega_Z$ is $(-1)^{d_Yd_Z}$ times the switch isomorphism, showing that 
\[
(p_{XY})^*\alpha\cdot (q_{YZ})^*\beta= (q_{YZ})^*\beta\cdot (p_{XY})^*\alpha
\]
where the comparison is obtained via the canonical isomorphism $(p_{XY})^*\omega_Z\otimes (q_{YZ})^*\omega_Y\simeq (q_{YZ})^*\omega_Y\otimes (p_{XY})^*\omega_Z$. The claim follows.
\end{rem}

The proof that the composition we defined is associative follows essentially from the fact that the intersection product is associative, but there are some subtleties involved when dealing with the required line bundles so we write it for the sake of completeness. 

\begin{lem}\label{lem:associative}
The composition of finite MW-correspondences is associative.
\end{lem}

\begin{proof}
We will use the following Cartesian square where all morphisms are projections onto the respective factors:
\begin{equation}\label{eq:assoc1}
\xymatrix{ & & X\times Y\ar[r]^-{q_Y} & Y & \\
 & X\times T & X\times Y\times T\ar[r]^-{q_{YT}}\ar[l]_-{q_{XT}}\ar[u]^-{s_{XY}} & Y\times T\ar[u]^-{s_{Y}} &  \\
Z\times T\ar[d]_-{p_Z} & X\times Z\times T\ar[d]_-{p_{XZ}}\ar[u]^-{p_{XT}}\ar[l]_-{r_{ZT}} & X\times Y\times Z\times T\ar[r]^-{q_{YZT}}\ar[d]_-{p_{XYZ}}\ar[l]_-{q_{XZT}}\ar[u]^-{p_{XYT}} & Y\times Z\times T\ar[r]^-{q_{ZT}}\ar[d]_-{p_{YZ}}\ar[u]^-{p_{YT}} & Z\times T \ar[d]_-{p_Z}  \\
Z & X\times Z\ar[d]_-{p_X}\ar[l]_-{r_Z} & X\times Y\times Z\ar[r]^-{q_{YZ}}\ar[d]_-{p_{XY}}\ar[l]_-{q_{XZ}} & Y\times Z\ar[r]^-{q_Z}\ar[d]_-{p_Y} & Z  \\
 & X & X\times Y\ar[r]^-{q_Y}\ar[l]_-{q_X} & Y. & }
\end{equation}

Now, let $\alpha\in \cor k(X,Y)$, $\beta\in \cor k(Y,Z)$ and $\gamma\in \cor k(Z,T)$. In our computation, we treat them as elements of some Chow-Witt group, and consider their push-forwards and pull-backs as usual. We omit the relevant line bundles, all our choices being canonical and already mentioned in the previous paragraphs. The composite $\beta\circ\alpha$ is represented by the cycle $(q_{XZ})_*(p_{XY}^*\alpha\cdot q_{YZ}^*\beta)$, while the composite $\gamma\circ (\beta\circ\alpha)$ is given by
\[
(p_{XT})_*\left( p_{XZ}^*(q_{XZ})_*(p_{XY}^*\alpha\cdot q_{YZ}^*\beta)\cdot r_{ZT}^*\gamma\right).
\]
Using the base change formula (Proposition \ref{prop:basechange}, with the canonical isomorphisms of Remark \ref{rem:choice}), we obtain
\[
p_{XZ}^*(q_{XZ})_*(p_{XY}^*\alpha\cdot q_{YZ}^*\beta)=(q_{XZT})_*p_{XYZ}^*(p_{XY}^*\alpha\cdot q_{YZ}^*\beta).
\]
Next, we can use the projection formula (Corollary \ref{cor:pformula}) to get
\[
(q_{XZT})_*p_{XYZ}^*(p_{XY}^*\alpha\cdot q_{YZ}^*\beta)\cdot r_{ZT}^*\gamma=(q_{XZT})_*\left(p_{XYZ}^*(p_{XY}^*\alpha\cdot q_{YZ}^*\beta)\cdot q_{XZT}^*r_{ZT}^*\gamma\right).
\]
Since the product on Chow-Witt groups is associative, it follows that the composite $\gamma\circ (\beta\circ\alpha)$ is the push-forward along the projection $X\times Y\times Z\times T\to X\times T$ of the product of the pull-backs of $\alpha,\beta,\gamma$ along the respective projections. 

We now turn to the computation of $(\gamma\circ \beta)\circ\alpha$. The composite $\gamma\circ\beta$ is given by $(p_{YT})_*(p_{YZ}^*\beta\cdot q_{ZT}^*\gamma)$, while $(\gamma\circ \beta)\circ\alpha$ is of the form
\[
(q_{XT})_*\left(s_{XY}^*\alpha\cdot q_{YT}^*(p_{YT})_*(p_{YZ}^*\beta\cdot q_{ZT}^*\gamma)\right).
\]
Using the base change formula once again, we obtain
\[
q_{YT}^*(p_{YT})_*(p_{YZ}^*\beta\cdot q_{ZT}^*\gamma)=(p_{XYT})_*q_{YZT}^*(p_{YZ}^*\beta\cdot q_{ZT}^*\gamma).
\]
Here, $(p_{XYT})_*q_{YZT}^*(p_{YZ}^*\beta\cdot q_{ZT}^*\gamma)$ is a cycle of codimension $d_T$ (the dimension of $T$) with coefficients in the line bundle $\omega_T$.
Using Remark \ref{rem:everythingcommutes}, we see that 
\[
s_{XY}^*\alpha\cdot (p_{XYT})_*q_{YZT}^*(p_{YZ}^*\beta\cdot q_{ZT}^*\gamma)= (p_{XYT})_*q_{YZT}^*(p_{YZ}^*\beta\cdot q_{ZT}^*\gamma)\cdot s_{XY}^*\alpha
\]
and we can use the projection formula (Corollary \ref{cor:pformula}) to obtain 
\[
(p_{XYT})_*q_{YZT}^*(p_{YZ}^*\beta\cdot q_{ZT}^*\gamma)\cdot s_{XY}^*\alpha=(p_{XYT})_*\left(q_{YZT}^*(p_{YZ}^*\beta\cdot q_{ZT}^*\gamma)\cdot p_{XYT}^*s_{XY}^*\alpha\right).
\]
Now, $q_{YZT}^*(p_{YZ}^*\beta\cdot q_{ZT}^*\gamma)$ is a cycle of codimension $d_Z+d_T$ with coefficients in $\omega_Z\otimes \omega_T$ and $p_{XYT}^*s_{XY}^*\alpha$ is a cycle of codimension $d_Y$ with coefficients in $\omega_Y$. Applying Remark \ref{rem:everythingcommutes} once again, we obtain
\[
q_{YZT}^*(p_{YZ}^*\beta\cdot q_{ZT}^*\gamma)\cdot p_{XYT}^*s_{XY}^*\alpha=p_{XYT}^*s_{XY}^*\alpha\cdot q_{YZT}^*(p_{YZ}^*\beta\cdot q_{ZT}^*\gamma)
\]
showing that $(\gamma\circ \beta)\circ\alpha$ is also equal to the push-forward along the $X\times Y\times Z\times T\to X\times T$ of the product of the pull-backs of $\alpha,\beta,\gamma$ along the respective projections.
\end{proof}

\subsection{Morphisms of schemes and finite MW-correspondences}\label{subsec:embedding}

Let $X,Y$ be smooth schemes of respective dimensions $d_X$ and $d_Y$. Let $f:X\to Y$ be a morphism and let $\Gamma_f:X\to X\times Y$ be its graph. Then $\Gamma_f(X)$ is of codimension $d_Y$ in $X\times Y$, finite and surjective over $X$. If $p_X:X\times Y\to X$ is the projection map, then $p_X\Gamma_f=\id$ and it follows that we have isomorphisms $\O_X\simeq \omega_{X/k}\otimes \Gamma_f^*p_X^*\omega_{X/k}^\vee$ and $\omega_{X\times Y/k}\otimes p_X^*\omega_{X/k}^\vee\simeq p_Y^*\omega_{Y/k}$, 
where the latter is obtained via the isomorphisms $\omega_{X\times Y/k}\simeq p_X^*\omega_{X/k}\otimes p_Y^*\omega_{Y/k}$ and $(-1)^{d_Xd_Y}$ the switch isomorphism $p_X^*\omega_{X/k}\otimes p_Y^*\omega_{Y/k}\simeq p_Y^*\omega_{Y/k}\otimes p_X^*\omega_{X/k}$.

Therefore we obtain a finite push-forward
\[
i_*: \sKMW_0(X)\to \chst {d_Y}{\Gamma_f}{X\times Y}{\omega_Y}
\]
We denote by $\graph f$ the class of $i_*(\langle 1\rangle)$ in $\chst {d_Y}{\Gamma_f}{X\times Y}{p^*\omega_Y}$. In particular, when $X=Y$ and $f=\id$, we set $1_X:=\graph {\id}$. Using \cite[Proposition 6.8]{Fasel07} we can check that $1_X$ is the identity for the composition defined in the previous section.

\begin{exem}\label{ex:action}
Let $X$ be a smooth scheme over $k$. The diagonal morphism induces a push-forward homomorphism
\[
\sKMW_0(X)\to \chst {d_X}{X}{X\times X}{\omega_X}
\] 
and a ring homomorphism $\sKMW_0(X)\to \cor k(X,X)$. For any smooth scheme $Y$, composition of morphisms endows the group $\cor k(Y,X)$ with the structure of a left $\sKMW_0(X)$-module and a right $\sKMW_0(Y)$-module.
\end{exem}

\begin{defin} \label{def:cortilde}
Let $\cor k$ be the category whose objects are smooth schemes and whose morphisms are the abelian groups $\cor k(X,Y)$ defined in Section \ref{sec:fGW}. We call it the \emph{category of finite MW-correspondences over $k$}.
\end{defin}

We see that $\cor k$ is an additive category, with disjoint union as direct sum.
We let the reader check that associating $\graph f$ to any morphism of smooth schemes $f:X\to Y$ gives a functor $\tilde\gamma:\sm{k}\to \cor k$.
\begin{rem} 
The category of finite correspondences as defined by Voevodsky can be recovered by replacing Chow-Witt groups by Chow groups in our definition. Indeed, when $Y$ is equidimensional of dimension $d=\dim Y$ and $T \in \Adm(X,Y)$,
\[
\CH^d_T(X \times Y) = \bigoplus_{x \in (X \times Y)^{(d)}\cap T} \Z
\]
since the previous group in the Gersten complex is zero because $T$ is $d$-dimensional, and the following group is also zero because there are no negative $\K$-groups. The composition of Voevodsky's finite correspondences coincides with ours as one can easily see from Lecture $1$ in \cite{Mazza06}. 

By the same procedure, it is of course possible to define finite correspondences using other cohomology theories with support, provided that they satisfy the classical axioms used in the definition of the composition (base change, etc.).
\end{rem}
The forgetful homomorphisms 
\[
\chst dT{X\times Y}{\omega_Y}\to \CH^d_T(X\times Y)
\] 
yield a functor $\pi:\cor k\to \ucor{k}$ (use \cite[Prop. 6.12]{Fasel07}) which is additive, and the classical functor $\gamma:\sm{k}\to \ucor{k}$ is the composite functor $\sm{k}\stackrel{\tilde\gamma}\to \cor k\stackrel{\pi}\to \ucor{k}$.

On the other hand, the hyperbolic homomorphisms
\[
\CH^d_T(X\times Y)\to \chst dT{X\times Y}{\omega_Y}
\]
yield a homomorphism $H_{X,Y}:\ucor{k}(X,Y)\to \cor k(X,Y)$ for any smooth schemes $X,Y$ (but not a functor $\ucor{k}\to \cor{k}$ since $H_{X,X}$ doesn't preserve the identity). The composite $\pi_{X,Y}H_{X,Y}$ is just the multiplication by $2$, as explained in Section \ref{sec:chowwitt}.

We now give two examples showing how to compose a finite MW-correspondence with a morphism of schemes.

\begin{exem}[Pull-back]\label{ex:flat_example}
Let $X,Y,U \in \sm k$ and let $f:X \to Y$ be a morphism. Let $(f\times 1):(X\times U)\to (Y\times U)$ be induced by $f$ and let $T\in \Adm(Y,U)$ be an admissible subset. Then $F:=(f\times 1)^{-1}(T)$ is an admissible subset of $X\times U$ by \cite[Lemma 1.6]{Mazza06}. It follows that the pull-back of cycles $(f\times 1)^*$ induces a homomorphism $\cor k(Y,U)\to \cor k(X,U)$. We let the reader check that it coincides with the composition with $\graph f$.
\end{exem}

\begin{exem}[Push-forwards]\label{ex:push-forwards}

Let $X$ and $Y$ be smooth schemes of dimension $d$ and let $f:X\to Y$ be a finite morphism such that any irreducible component of $X$ surjects to the irreducible component of $Y$ it maps to. Contrary to the classical situation, we don't have a finite MW-correspondence $Y\to X$ associated to $f$ in general, however, we can define one if $\omega_f$ admits an orientation.  

Let then $(\L,\psi)$ be an orientation of $\omega_f$. We define a finite MW-correspondence $\alpha(f,\L,\psi)\in \cor k(Y,X)$ as follows. Let $\Gamma^t_f:X\to Y\times X$ be the (transpose of the) graph of $f$. Then $X$ is an admissible subset and we have a transfer morphism 
\[
(\Gamma^t_f)_*:\sKMW_0(X,\omega_f)\to \chst {d}X{Y\times X}{\omega_X}.
\]
Composing with the homomorphism 
\[
\chst {d}X{Y\times X}{\omega_X}\to \cor k(Y,X),
\]
we get a map $\sKMW_0(X,\omega_f)\to \cor k(Y,X)$. Now the isomorphism $\psi$ together with the canonical isomorphism $\sKMW_0(X)\simeq \sKMW_0(X,\L\otimes\L)$ yield an isomorphism $\sKMW_0(X)\to \sKMW_0(X,\omega_f)$. We define the finite MW-correspondence $\alpha(f,\L,\psi)$ (or sometimes simply $\alpha(f,\psi)$) as the image of $\langle 1\rangle$ under the composite
\[
\sKMW_0(X)\to \sKMW_0(X,\omega_f)\to \cor k(Y,X).
\]
If $(\L^\prime,\psi^\prime)$ is equivalent to $(\L,\psi)$, then it is easy to check that the correspondences $\alpha(f,\L,\psi)$ and $\alpha(f,\L^\prime,\psi^\prime)$ are equal. Thus any element of $\Or(\omega_f)$ yields a finite MW-correspondence. In general, different choices of elements in $\Or(\omega_f)$ yield different correspondences.

When $g:Y \to Z$ is another such morphism with an orientation $(\mathcal M,\phi)$ of $\omega_g$, then $(\L \otimes f^*\mathcal M, \psi \otimes f^*\phi)$ is an orientation of $\omega_{g \circ f}=\omega_f \otimes f^*\omega_g$, and we have $\alpha(f,\L,\psi)\circ \alpha(g,\mathcal M,\phi) = \alpha(g \circ f,\L \otimes f^*\mathcal M, \psi \otimes f^*\phi)$. 

Let now $U$ be a smooth scheme of dimension $n$ and let $T\in \Adm(X,U)$. The commutative diagram
\[
\xymatrix{X\times U\ar[r]^{f\times 1}\ar[rd] & Y\times U\ar[d]^{p_Y} \\ 
 & Y}
\]
where $p_Y:Y\times U$ is the projection on the first factor and \cite[Lemma 1.4]{Mazza06} show that $(f\times 1)(T)\in A(Y,U)$ in our situation. Moreover, we have a push-forward morphism
\[
(f\times 1)_*:\chst nT{X\times U}{\omega_U\otimes \omega_f}\to \chst n{(f\times 1)(T)}{Y\times U}{\omega_U}
\] 
Using the trivialization $\psi$, we get a push-forward morphism
\[
(f\times 1)_*:\chst nT{X\times U}{\omega_U}\to \chst n{(f\times 1)(T)}{Y\times U}{\omega_U}
\] 
Now this map commutes to the extension of support homomorphisms, and it follows that we get a homomorphism 
\[
(f\times 1)_*:\cor k(X,U)\to \cor k(Y,U)
\]
depending on $\psi$. We let the reader check that $(f\times 1)_*(\beta)=\beta\circ \alpha(f,\psi)$ for any $\beta\in \cor k(X,U)$, using the base change formula as well as the projection formula.

In particular, when $U=Y$ and $\beta=\graph f$, using $\psi$ we can push-forward along $f$ as $\sKMW_0(X) \simeq \sKMW_0(X,\omega_f) \to \sKMW_0(Y)$ to obtain an element $f_*\langle 1 \rangle$ in $\sKMW_0(Y)$, and we have $\graph f \circ \alpha(f,\psi)=f_*\langle 1 \rangle \cdot \id_Y$, using the action of Example \ref{ex:action}. 
\end{exem}

\begin{rem}\label{rem:notsurjective}
Suppose that $X$ is connected, $f:X\to Y$ is a finite surjective morphism with relative bundle $\omega_f$ and that $\omega_f\neq 0$ in $\Pic(X)/2$ (take for instance the map $\mathbb{P}^2\to \mathbb{P}^2$ defined by $[x_0:x_1:x_2]\mapsto [x_0^2:x_1^2:x_2^2]$). Consider the finite correspondence $Y\to X$ corresponding to (the transpose of) the graph of $f$. As in the previous example, we have a transfer homomorphism 
\[
(\Gamma_f)_*:\sKMW_0(X,\omega_f)\to \chst {d}X{Y\times X}{\omega_X}.
\]
making the diagram
\[
\xymatrix{\sKMW_0(X,\omega_f)\ar[r]^-{\simeq} \ar[d] & \chst {d}X{Y\times X}{\omega_X}\ar[d] \\
\Z\ar[r]^-{\simeq} & \CH^d_X(Y\times X)}
\]
commutative, where the vertical homomorphisms are the forgetful maps and the horizontal ones are isomorphisms, as one clearly sees using the Milnor-Witt Gersten complex. Since $\omega_f$ is not a square in $\Pic(X)$, the left-hand vertical map is not surjective: it is equal to the rank map. It follows that the map $\cor k(Y,X)\to \ucor{k}(Y,X)$ is not surjective. Indeed, if we enlarge the support to a larger admissible set $X'$, one of its irreducible components will be $X$, and the (usual) Chow groups with support in $X'$ will be isomorphic to several copies of $\Z$, one for each irreducible component, and the forgetful map cannot surject to the copy of $\Z$ corresponding to $X$.
\end{rem}

\subsection{Tensor products}

Let $X_1,X_2,Y_1,Y_2$ be smooth schemes over $\spec k$. Let $d_1=\dim Y_1$ and $d_2=\dim Y_2$.

Let $\alpha_1\in \chst {d_1}{T_1}{X_1\times Y_1}{\omega_{Y_1}}$ and $\alpha_2\in \chst {d_2}{T_2}{X_2\times Y_2}{\omega_{Y_2}}$ for some admissible subsets $T_i\subset X_i\times Y_i$. The exterior product defined in \cite[\S 4]{Fasel07} gives a cycle 
\[
(\alpha_1\times \alpha_2)\in \chst {d_1+d_2}{T_1\times T_2}{X_1\times Y_1\times X_2\times Y_2}{p_{Y_1}^*\omega_{Y_1/k}\otimes p_{Y_2}^*\omega_{Y_2/k}}
\]
where $p_{Y_i}:X_1\times Y_1\times X_2\times Y_2\to Y_i$ is the projection to the corresponding factor. 
Let $\sigma:X_1\times Y_1\times X_2\times Y_2\to X_1\times X_2\times Y_1\times Y_2$ be the transpose isomorphism. Applying $\sigma_*$, we get a cycle
\[
\sigma_*(\alpha_1\times \alpha_2)\in \chst {d_1+d_2}{\sigma(T_1\times T_2)}{X_1\times X_2\times Y_1\times Y_2}{p_{Y_1}^*\omega_{Y_1/k}\otimes p_{Y_2}^*\omega_{Y_2/k}}.
\]
On the other hand, $p_{Y_1}^*\omega_{Y_1/k}\otimes p_{Y_2}^*\omega_{Y_2/k}=\omega_{Y_1\times Y_2}$ and it is straightforward to check that (the underlying reduced scheme) $\sigma(T_1\times T_2)$ is finite and surjective over $X_1\times X_2$. Thus $\sigma_*(\alpha_1\times \alpha_2)$ defines a finite MW-correspondence between $X_1\times X_2$ and $Y_1\times Y_2$. 

\begin{defin}\label{def:tensor}
If $X_1$ and $X_2$ are smooth schemes over $k$, we define their tensor product as $X_1\otimes X_2:=X_1\times X_2$. If $\alpha_1\in \cor k(X_1,Y_1)$ and $\alpha_2\in \cor k(X_2,Y_2)$, then we define their tensor product as $\alpha_1\otimes \alpha_2:=\sigma_*(\alpha_1\times \alpha_2)$. 
\end{defin}

\begin{lem}
The tensor product $\otimes$, together with the obvious associativity and symmetry isomorphisms endows $\cor k$ with the structure of a symmetric monoidal category. 
\end{lem}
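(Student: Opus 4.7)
The plan is to verify, in the standard order, the axioms of a symmetric monoidal category: bifunctoriality of $\otimes$, existence and coherence of a unit object, associativity and symmetry constraints satisfying the pentagon and hexagon axioms.

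First I would check that Definition \ref{def:tensor} indeed gives a well-defined map $\cor k(X_1,Y_1) \times \cor k(X_2,Y_2) \to \cor k(X_1 \otimes X_2, Y_1 \otimes Y_2)$. The exterior product commutes with the extension of support homomorphisms along any enlargement $T_i \subseteq T_i'$, so the map passes to the colimit over $\Adm(X_i,Y_i)$. The identification of the twisting line bundles uses the canonical isomorphism $\sigma^*\omega_{X_1 \times X_2 \times Y_1 \times Y_2/X_1 \times X_2} \simeq \omega_{X_1 \times Y_1/X_1} \otimes \omega_{X_2 \times Y_2/X_2}$ (with an implicit identification via $\sigma$ of pullbacks of canonical sheaves), exactly as in the lemma preceding the composition of correspondences. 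The admissibility of $\sigma(T_1 \times T_2)$ over $X_1 \times X_2$ was already noted.

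The main step, and the one I expect to require the most care, is bifunctoriality, i.e., the interchange law
\[
(\beta_1 \otimes \beta_2) \circ (\alpha_1 \otimes \alpha_2) = (\beta_1 \circ \alpha_1) \otimes (\beta_2 \circ \alpha_2)
\]
for $\alpha_i \in \cor k(X_i,Y_i)$ and $\beta_i \in \cor k(Y_i,Z_i)$, together with the unit relation $1_{X_1} \otimes 1_{X_2} = 1_{X_1 \otimes X_2}$. To establish this, I would spread both sides out on $X_1 \times X_2 \times Y_1 \times Y_2 \times Z_1 \times Z_2$, pull each $\alpha_i$ and $\beta_i$ back along the appropriate projections, multiply them using the associative, graded-commutative cup product on Chow--Witt groups from Section \ref{sec:chowwitt}, and push forward to $X_1 \times X_2 \times Z_1 \times Z_2$. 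Both sides unfold to the same expression once one invokes: (i) the base change formula of Section \ref{sec:chowwitt} for the Cartesian squares relating the various projections and transpositions $\sigma$; (ii) the projection formula to commute pushforwards with pullback multipliers; (iii) the compatibility of the external product with pullbacks and pushforwards used in the construction of $\circ$. The unit relation is immediate since $1_{X_i}$ is the image of $\langle 1 \rangle \in \sKMW_0(X_i)$ by the diagonal, and external product of diagonals is the diagonal of the product up to the evident $\sigma$.

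Having the bifunctor, I would obtain the associativity, unit and symmetry isomorphisms from the corresponding isomorphisms of smooth schemes
\[
(X_1 \times X_2) \times X_3 \simeq X_1 \times (X_2 \times X_3), \quad \spec k \times X \simeq X \simeq X \times \spec k, \quad X_1 \times X_2 \simeq X_2 \times X_1,
\]
by applying the functor $\sm k \to \cor k$ from Definition \ref{def:cortilde} to these canonical maps (taking the class $\graph{-}$). That these are natural in each variable reduces again to a base change plus projection formula computation on the graphs, entirely analogous to Example \ref{ex:flat_example}. Finally, the pentagon, triangle and hexagon identities, as well as the involutivity $\sigma \circ \sigma = \id$, hold on the nose in $\sm k$, and since the functor $\sm k \to \cor k$ is monoidal on objects and preserves composition, these coherence diagrams commute in $\cor k$. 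This completes the verification.
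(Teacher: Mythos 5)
Your proposal is correct and follows the standard verification that the paper itself leaves implicit: the paper's entire proof is the single word ``Straightforward,'' and your outline (well-definedness on colimits, the interchange law via base change and projection formulas, constraints induced by the scheme-level isomorphisms through the graph functor, coherence inherited from $\sm{k}$) is exactly the intended argument. No discrepancy to report.
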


\begin{proof}
Straightforward.
\end{proof}


\section{Presheaves on $\cor k$}

\begin{defin}
A \emph{presheaf with MW-transfers} is a contravariant additive functor $\cor k\to {\mathcal Ab}$. We will denote by $\psh k$ the category of presheaves with MW-transfers. Let $\tau$ be $\Zar$, $\Nis$ and $\Et$, respectively the Zariksi, Nisnevich or \'etale topology on $\sm k$. We say that a presheaf with MW-transfers is a $\tau$-sheaf with MW-transfers if when restricted to $\sm k$, it is a sheaf in the $\tau$-topology. We denote by $\sh k{\tau}$ the category of $\tau$-sheaves with MW-transfers.
\end{defin}

\begin{rem}
The sheaves with MW-transfers are closely related to sheaves with generalized transfers as defined in \cite[Definition 5.7]{Morel11}. Indeed, let $M$ be a Nisnevich sheaf with MW-transfers. Then it is endowed with an action of $\sKMW_0$ by Example \ref{ex:action}. Following the procedure described in Section \ref{sec:limits}, one can define $M(F)$ for any finitely generated field extension $F/k$. If $F\subset L$ is a finite field extension, then the canonical orientation of Lemma \ref{lem:orientation} together with the push-forwards defined in Example \ref{ex:push-forwards} show that we have a homomorphism $\Tr_F^L:M(L)\to M(F)$. One can then check that the axioms listed in \cite[Definition 5.7]{Morel11} are satisfied. Conversely, we don't know if a Nisnevich sheaf with generalized transfers in the sense of Morel yields a Nisnevich sheaf with MW-transfers but this seems quite plausible. 
\end{rem}

Recall first that there is a forgetful additive functor $\pi:\cor k\to \ucor{k}$. It follows that any (additive) presheaf on $\ucor{k}$ defines a presheaf on $\cor k$ by composition. We now give a more exotic example. 

\begin{lem}
For any $j\in\Z$, the contravariant functor $X\mapsto \sKMW_j(X)$ is a presheaf on $\cor k$.
\end{lem}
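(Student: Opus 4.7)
The plan is to associate to each finite Chow--Witt correspondence $\alpha \in \cor k(X,Y)$ a pull-back homomorphism $\alpha^* \colon \H^i(Y,\sKMW_j) \to \H^i(X,\sKMW_j)$. For $\alpha$ represented on some $T \in \Adm(X,Y)$, so that $\alpha \in \chst{d_Y}{T}{X\times Y}{\omega_{X\times Y/X}}$ with $d_Y=\dim Y$, I set
\[
\alpha^*(\beta) := (p_X)_*\bigl(\alpha \cdot p_Y^*\beta\bigr),
\]
where $p_X, p_Y$ are the two projections from $X\times Y$, the cup product is the one of Section \ref{sec:chowwitt} on cohomology supported on $T$, and $(p_X)_*$ is the support-aware push-forward recalled in Section \ref{sec:chowwitt}, applicable because $T$ is finite over $X$ (being admissible). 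The twist $\omega_{X\times Y/X}$ carried by $\alpha$ is precisely what cancels the relative canonical shift of $(p_X)_*$, so the output lies back in the untwisted $\H^i(X,\sKMW_j)$. Naturality of pull-back, cup product and push-forward with respect to extension of support shows that the formula passes to the colimit over $\Adm(X,Y)$, so it descends to a well-defined map on $\cor k(X,Y)$; additivity is clear.

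For the identity axiom, I unfold $1_X = i_*(\langle 1\rangle)$ for the diagonal $i\colon X\to X\times X$ with projections $p_1,p_2$. The projection formula for $i$ gives $i_*(\langle 1\rangle) \cdot p_2^*\beta = i_*(i^*p_2^*\beta) = i_*(\beta)$, and then $(p_1)_* i_* = \id$ since $p_1 \circ i = \id_X$, so $(1_X)^*\beta = \beta$. The same manipulation with $i$ replaced by the graph embedding of an arbitrary $f\colon X\to Y$ shows that $\graph f^{\,*}$ recovers the usual pull-back $f^*$, which is a useful consistency check.

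The main step, and the main obstacle, is functoriality under composition: given $\alpha \in \cor k(X,Y)$, $\beta \in \cor k(Y,Z)$ and $\gamma \in \H^i(Z,\sKMW_j)$, I must show $(\beta\circ\alpha)^*(\gamma) = \alpha^*(\beta^*(\gamma))$. Using the notation of Diagram \eqref{eqn:composition} and the identities $r'\pi = rq'$, $p'\pi = pt'$, the projection formula for $\pi$ yields
\[
(\beta\circ\alpha)^*(\gamma) = (pt')_*\bigl(t'^*\alpha \cdot q'^*(\beta\cdot r^*\gamma)\bigr);
\]
the projection formula for $t'$ extracts $\alpha$, giving $p_*(\alpha \cdot (t')_* q'^*(\beta\cdot r^*\gamma))$; and base change on the inner Cartesian square with edges $t,q,t',q'$ gives $(t')_* q'^* = q^* t_*$. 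The Tor-independence hypothesis is automatic because $q$ is a smooth projection, and the support of $\beta\cdot r^*\gamma$ lies in $T$, finite over $Y$, so Remark \ref{rem:noncartesian} applies. Substituting produces $p_*(\alpha \cdot q^*\beta^*(\gamma)) = \alpha^*(\beta^*(\gamma))$, as desired.

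The residual technical work is a twist bookkeeping exercise: at each of the three manipulations above one must check that the various line-bundle twists compose correctly via the canonical isomorphisms of relative canonical bundles already set up in Section \ref{sec:chowwitt} and in the construction of the composition of correspondences. This is routine but tedious, and no new input is needed beyond what is already collected in the paper.
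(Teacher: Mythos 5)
Your proof is correct and takes essentially the same route as the paper's: the same formula $\alpha^*(\beta)=(p_X)_*(p_Y^*(\beta)\cdot\alpha)$ (you write the cup product in the opposite order, which alters the structure only by a unit $\langle -1\rangle$ in odd bidegrees, since the Chow--Witt product is $\langle -1\rangle$-commutative and the two orders agree on graphs), followed by the same verification of compatibility with composition via two applications of the projection formula and the base-change identity $(t^\prime)_*(q^\prime)^*=q^*t_*$ justified by Remark~\ref{rem:noncartesian} with support in the admissible set $T$. The only addition is your explicit check of the identity axiom, which the paper leaves implicit.
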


\begin{proof}
Let then $X,Y$ be smooth schemes and $T\subset X\times Y$ be an admissible subset. Let $\beta \in \sKMW_j(Y)$ and $\alpha\in \chst {d_Y}T{X\times Y}{\omega_Y}$ be cycles. We set 
\[
\alpha^*(\beta):= p_*(p_Y^*(\beta)\cdot \alpha)
\]
where $p$ and $p_Y$ are the respective projections, and $p_*$ is defined using the canonical isomorphism $\omega_Y\simeq \omega_{X\times Y}\otimes \omega_X^\vee$. We let the reader check that $\alpha^*$ is additive. Next, we observe that $p_Y^*(\beta)$ commutes with any element in the total Chow-Witt group of $X\times Y$ (\cite[Lemmas 4.19 and 4.20]{Fasel07} or \cite{Calmes17}).

If $T\subset T^\prime$, we have a commutative diagram
\[
\xymatrix{\chst {d_Y}T{X\times Y}{\omega_Y}\ar[r]\ar[rd]_{p_*} & \chst {d_Y}{T^\prime}{X\times Y}{\omega_Y}\ar[d]^-{p_*} \\
 & \ch iX}
\]
where the top horizontal morphism is the extension of support. It follows that $\alpha\mapsto \alpha^*$ defines a map $\cor k(X,Y)\to \homm {{\mathcal A}b}{\H^i(Y,\sKMW_j)}{\H^i(X,\sKMW_j)}$. We now check that this map preserves the respective compositions. With this in mind, consider again the diagram (\ref{eqn:composition})
\[
\begin{gathered}
\xymatrix{
X\times Z \ar@/^2em/[rrrd]^-{r_Z}\ar@/_1.3em/[rddd]_-{p_X} & & & \\
 & X\times Y\times Z\ar[r]^-{q_{YZ}}\ar[d]_-{p_{XY}}\ar[lu]_-{q_{XZ}} & Y\times Z\ar[r]^-{q_Z}\ar[d]^-{p_Y} & Z \\
 & X\times Y\ar[r]_-{q_Y}\ar[d]_-p & Y & \\
 & X. & & 
}
\end{gathered}
\]
Let $\alpha_1\in \chst {d_Y}{T_1}{X\times Y}{\omega_Y}$ and $\alpha_2\in \chst {d_Z}{T_2}{Y\times Z}{\omega_Z}$ be correspondences, with $T_1\subset X\times Y$ and $T_2\subset Y\times Z$ admissible. Let moreover $\beta\in \sKMW_j(Z)$. By definition, we have 
\[
(\alpha_2\circ\alpha_1)^*(\beta)=(p_X)_*[r_Z^*\beta\cdot (q_{XZ})_*(p_{XY}^*\alpha_1\cdot q_{YZ}^*\alpha_2)].
\]
Using the projection formula and the fact that $r_Z^*\beta$ commutes with the total Chow-Witt group, we have 
\begin{eqnarray*}
 (p_X)_*[r_Z^*\beta\cdot (q_{XZ})_*(p_{XY}^*\alpha_1\cdot q_{YZ}^*\alpha_2)]& = & (p_X)_*[(q_{XZ})_*(p_{XY}^*\alpha_1\cdot q_{YZ}^*\alpha_2)\cdot r_Z^*\beta]   \\
 & = & (p_X)_*[(q_{XZ})_*(p_{XY}^*\alpha_1\cdot q_{YZ}^*\alpha_2\cdot q_{XZ}^*r_Z^*\beta)] \\
 & = & p_*(p_{XY})_*(p_{XY}^*\alpha_1\cdot q_{YZ}^*\alpha_2\cdot q_{XZ}^*r_Z^*\beta).
\end{eqnarray*} 
On the other hand,
\begin{eqnarray*}
\alpha_1^*\circ \alpha_2^*(\beta) & = & \alpha_1^*((p_Y)_*(q_Z^*\beta\cdot \alpha_2)) \\
 & = & p_*(q_Y^*(p_Y)_*(q_Z^*\beta\cdot \alpha_2)\cdot \alpha_1).
\end{eqnarray*}
By base change, $q_Y^*(p_Y)_*=(p_{XY})_*q_{YZ}^*$ and it follows that 
\begin{eqnarray*}
\alpha_1^*\circ \alpha_2^*(\beta) & = & p_*((p_{XY})_*(q_{YZ}^*q_Z^*\beta\cdot q_{YZ}^*\alpha_2)\cdot \alpha_1) \\
 & =  & 
\end{eqnarray*}
Using the projection formula once again, the latter is equal to 
\[
p_*(p_{XY})_*(q_{YZ}^*q_Z^*\beta\cdot q_{YZ}^*\alpha_2\cdot p_{XY}^*\alpha_1)
\]
We conclude using Remark \ref{rem:everythingcommutes} and the fact that $\beta$ commutes with the total Chow-Witt group.
\end{proof}

\begin{rem}
More generally, the contravariant functor $X\mapsto \H^i(X,\sKMW_j)$ is a presheaf on $\cor k$ for any $j\in\Z$ and $i\in\N$. The above proof applies with some modifications (taking into account that $\beta$ now doesn't commute with the total Chow-Witt group), or we can alternatively use the facts that $\sKMW_j$ is a homotopy invariant Nisnevich sheaf with MW-transfers, that the category of Nisnevich sheaves with MW transfers has enough injectives and that Zariski and Nisnevich cohomology coincide for invariant Nisnevich sheaves with MW-transfers. We refer the reader to \cite{Deglise16} for details.

In contrast, the presheaf $X\mapsto \H^i(X,\sKMW_j)$ doesn't have transfers in the sense of Voevodsky, as the projective bundle formula doesn't hold in that setting (see \cite[Theorem 11.7]{Fasel09d}).
\end{rem}

\subsection{Extending presheaves to limits}\label{sec:limits}

We consider the category $\cP$ of filtered projective systems $((X_\lambda)_{\lambda \in I},f_{\lambda\mu})$ of smooth quasi-compact schemes over $k$, with affine \'etale 
transition morphisms $f_{\lambda\mu}:X_\lambda \to X_\mu$. Morphisms in $\cP$ are defined by $\Hom((X_\lambda)_{\lambda \in I}, (X_\mu)_{\mu \in J})=\varprojlim \big(\varinjlim \Hom(X_\lambda,X_{\mu})\big)$, as in \cite[8.13.3]{EGAIV3}. The limit of such a system exists in the category of schemes by loc.\ cit.\ 8.2.3, and by 8.13.5, the functor sending a projective system to its limit defines an equivalence of categories from $\cP$ to the full subcategory $\barsm{k}$ of schemes over $k$ that are limits of such systems.
\begin{rem}
It follows from this equivalence of categories that such a projective system converging to a scheme that is already finitely generated (e.g.\ smooth) over $k$ has to be constant above a large enough index.
\end{rem}

Let now $F$ be a presheaf of abelian groups on $\sm{k}$. We extend $F$ to a presheaf $\bar F$ on $\cP$ by setting on objects $\bar F((X_\lambda)_{\lambda \in I})=\varinjlim_{\lambda\in I} F(X_\lambda)$. An element of the set $\varprojlim \big(\varinjlim \Hom(X_\lambda,X_{\mu})\big)$ yields a morphism $\varinjlim_{\mu\in J} F(X_\mu)\to \varinjlim_{\lambda\in I} F(X_\lambda)$, this respects composition, and thus $\bar F$ is well-defined. Using the above equivalence of categories, it follows immediately that $\bar F$ defines a presheaf on $\barsm{k}$ which extends $F$ in the sense that $\bar F$ and $F$ coincide on $\sm{k}$ since a smooth scheme can be considered as a constant projective system.

Since any finitely generated field extension $L/k$ can be written as a limit of smooth schemes in the above sense, we can consider in particular $F(\spec L)$ and to shorten the notation, we often write $F(L)$ instead of $\bar F(\spec L)$ in what follows.

We will mainly apply this limit construction when $F$ is $\cor k(- \times X,Y)$, $\sKMW_*$ or the MW-motivic cohomology groups $\HMW^{p,q}(-,\Z)$, to be defined in \ref{def:genmotiviccohom}. 
\medskip

We now slightly extend this equivalence of categories to a framework useful for Chow-Witt groups with support. We consider the category $\cT$ of triples $(X,Z,\L)$ where $X$ is scheme of finite type over $k$, with a closed subset $Z$ and a line bundle $\L$ over $X$. A morphism from $(X_1,Z_1,\L_1)$ to $(X_2,Z_2,L_2)$ in this category is a pair $(f,i)$ where $f:X_1 \to X_2$ is a morphism of $k$-schemes such that $f^{-1}(Z_2)\subseteq Z_1$ and $i:f^*\L_2 \to \L_1$ is an isomorphism of line bundles over $X_1$. The composition of two such morphisms $(f,i)$ and $(g,j)$ is defined as $(f\circ g, j \circ g^*(i))$. Let $\tP$ be the category of projective systems in that category such that:
\begin{itemize}
\item the objects are regular and the transition maps are affine \'etale;
\item beyond any index, there is a $\mu$ such that for any $\lambda$ beyond $\mu$, we have $f_{\lambda\mu}^{-1}(Z_\mu)=Z_\lambda$,
\end{itemize}
with morphisms defined by a double limit of $\Hom$ groups in $\cT$, as above.
Let $X$ be the inverse limit of the $X_\lambda$. All the pull-backs of the various $\L_\lambda$ to $X$ are canonically isomorphic by pulling back the isomorphisms $i_{\lambda\mu}$, and by the last condition, the closed subsets $Z_\lambda$ stabilize to a closed subset $Z$ of $X$. In other words, the inverse limit of the system exists in $\cT$. 

\begin{prop} \label{prop:equivtriple}
The functor sending a system to its inverse limit is an equivalence of categories from $\tP$ to the full subcategory of $\cT$ of objets $(X,Z,\L)$ such that $X$ is a projective limit of regular schemes with affine \'etale transition morphisms. 
\end{prop}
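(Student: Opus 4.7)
The plan is to extend the equivalence of categories between $\cP$ and $\barsm{k}$ already invoked above (based on \cite[8.13.5]{EGAIV3}) by descending the extra data --- closed subset and line bundle --- to a finite level of any presenting projective system. There are two things to check: essential surjectivity and full faithfulness of the limit functor $\tP \to \cT$.

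For essential surjectivity, given $(X,Z,\L)$ in the target full subcategory of $\cT$, I would start by writing $X = \varprojlim_\lambda X_\lambda$ with $(X_\lambda)_{\lambda\in I}$ in $\cP$. Since $\L$ is of finite presentation on $X$, the standard limit machinery of \cite[\S 8]{EGAIV3} (specifically 8.5.2) produces an index $\mu_0$ and a line bundle $\L_{\mu_0}$ on $X_{\mu_0}$ whose pullback to $X$ is isomorphic to $\L$; one sets $\L_\lambda := f_{\lambda\mu_0}^*\L_{\mu_0}$ for $\lambda \geq \mu_0$, with tautological transition isomorphisms $i_{\lambda\mu}$. Similarly $Z$ descends to a closed subset $Z_{\mu_1} \subset X_{\mu_1}$ for some $\mu_1$, and by \cite[8.3.4]{EGAIV3} the condition $f_{\lambda\mu}^{-1}(Z_\mu) = Z_\lambda$ can be arranged beyond a common index $\mu \geq \mu_0, \mu_1$ after restriction to a cofinal subsystem. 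The resulting system then genuinely lies in $\tP$ and has limit $(X,Z,\L)$.

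For full faithfulness, a morphism $(f,i)\colon (X_1,Z_1,\L_1) \to (X_2,Z_2,\L_2)$ in $\cT$ between two such limits consists of an underlying scheme morphism $f$ --- which by the existing $\cP$-equivalence descends to a compatible family of morphisms at finite levels of the two systems --- together with an isomorphism $i\colon f^*\L_2 \to \L_1$. Once both $\L_j$ are represented at finite levels and $f$ is so represented, $i$ corresponds to a nowhere-vanishing section of a line bundle on $X$ pulled back from some $X_\lambda$; by \cite[8.5.2]{EGAIV3} such a section, together with its invertibility, descends to a finite level, yielding the required isomorphisms $i_\lambda$ that assemble into a morphism of systems in $\tP$. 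The containment $f^{-1}(Z_2)\subseteq Z_1$ is similarly stable under descent beyond a suitable index.

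The main obstacle is the bookkeeping required to combine the descent of the scheme morphism, the line bundle isomorphism, and the closed-subset containment into a genuine morphism of systems in $\tP$ --- rather than a mere coherent family of morphisms in $\cT$ --- which is handled by repeatedly passing to cofinal subsystems. None of the individual descent statements is deep once the finite-presentation machinery of \cite[\S 8]{EGAIV3} is in hand, but coordinating them coherently, and verifying that every compatibility required in the definition of $\tP$ is attained at a single cofinal level, is what requires care.
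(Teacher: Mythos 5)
Your proposal is correct and follows essentially the same route as the paper, which simply cites the limit machinery of \cite[\S 8]{EGAIV3}: 8.13.5 for the underlying schemes, 8.3.11 for the closed subset (using that closed subsets are constructible since the schemes are Noetherian), and 8.5.2 together with 8.5.5 for the line bundle and the morphisms between line bundles. Your more detailed bookkeeping of essential surjectivity and full faithfulness is a correct expansion of what the paper leaves implicit.
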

\begin{proof}
It follows from \cite{EGAIV3}, 8.13.5 for the underlying schemes, 8.3.11 for the closed subset (since our schemes are of finite type over a field, they are Noetherian and every closed subset is constructible), 8.5.2 and 8.5.5 for the line bundle.
\end{proof}

As previously, any presheaf of abelian groups or sets $F$ defined on the full subcategory of $\cT$ with regular underlying schemes can be extended uniquely to the subcategory of $\cT$ with underlying schemes that are limits. 

If the limit scheme $X$ happens to be regular, for any nonnegative integer $i$, the pull-back induces a map
\[
\varinjlim_{\lambda \in I} \chst i{Z_\lambda}{X_\lambda} {\L_\lambda} \to \chst i Z X \L.
\]
\begin{lem} \label{lem:limitChowiso}
This map is an isomorphism.
\end{lem}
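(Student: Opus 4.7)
The plan is to use the Rost-Schmid complex and reduce to a term-by-term colimit computation. Recall from Section \ref{sec:chowwitt} that, for a regular scheme $W$ of finite type over $k$ with closed subset $Y$ and line bundle $\mathcal M$, the group $\chst iYW{\mathcal M}$ is the $i$-th cohomology of the subcomplex of the Rost-Schmid complex supported on $Y$; in degree $n$ this subcomplex is
\[
C^n_Y(W,\sKMW_i,\mathcal M) = \bigoplus_{y \in W^{(n)} \cap Y} \KMW_{i-n}\bigl(k(y),\,\omega_{y}\otimes \mathcal M_{y}\bigr),
\]
with differentials given by residues. Since the transition morphisms $f_{\lambda\mu}:X_\lambda \to X_\mu$ are étale, hence flat and codimension-preserving, pull-back of cycles (as recalled in Section \ref{sec:residues}) induces maps of Rost-Schmid complexes
\[
C^n_{Z_\mu}(X_\mu,\sKMW_i,\L_\mu) \to C^n_{Z_\lambda}(X_\lambda,\sKMW_i,\L_\lambda)
\]
compatible with the given system, and the pull-back to $X$ factors through these. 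Since filtered colimits of abelian groups are exact and therefore commute with cohomology of complexes, it suffices to prove that for each $n$, the natural map
\[
\varinjlim_{\lambda\in I} C^n_{Z_\lambda}(X_\lambda,\sKMW_i,\L_\lambda) \longrightarrow C^n_Z(X,\sKMW_i,\L)
\]
is an isomorphism.

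For this, I would invoke the standard limit formalism of \cite{EGAIV3} exactly as in Proposition \ref{prop:equivtriple}. Since the transition maps are affine étale and all schemes are Noetherian, by loc.~cit.~8.2.9 and 8.3.11 the underlying topological space of $X$ is the limit of the $X_\lambda$, and the condition that $f_{\lambda\mu}^{-1}(Z_\mu)=Z_\lambda$ beyond some index ensures that the closed subsets match in the limit. Thus the set $X^{(n)}\cap Z$ is the filtered colimit (i.e., the set of equivalence classes of compatible families) of $X_\lambda^{(n)}\cap Z_\lambda$. Moreover, for a point $x\in X$ with image $x_\lambda \in X_\lambda$, the map of local rings $\mathcal O_{X_\mu,x_\mu}\to \mathcal O_{X_\lambda,x_\lambda}$ is étale for $\lambda$ beyond $\mu$, so one has a canonical identification
\[
k(x) = \varinjlim_{\lambda} k(x_\lambda),
\]
and the relative canonical bundle $\omega_x$ is likewise the colimit of the $\omega_{x_\lambda}$. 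The twists $\L_{x_\lambda}$ assemble to $\L_x$ via the coherent system of trivializations $i_{\lambda\mu}:f_{\lambda\mu}^*\L_\mu \simeq \L_\lambda$ built into the morphisms of $\tP$.

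It remains to see that $\KMW_{i-n}(-,\,\omega\otimes\L)$ commutes with filtered colimits of fields (with compatible one-dimensional twists). For ordinary $\KMW_{i-n}(F)$ this is immediate from the generators-and-relations presentation in Section 1, since each symbol lives in some finite subfield and each relation involves only finitely many elements. For the twisted version recalled in Section \ref{sec:twisting}, $\KMW_{i-n}(F,V)=\KMW_{i-n}(F)\otimes_{\Z[F^\times]}\Z[V^\times]$, both factors manifestly commute with filtered colimits of the pair $(F,V)$, and the tensor product does so as well. Combined with the fact that a filtered direct sum of filtered colimits is the filtered colimit of the direct sums (using that at each step, only finitely many codimension $n$ points of each $X_\lambda$ lie above a given codimension $n$ point of $X_\mu$, by finiteness of étale fibers), this shows that the map of degree-$n$ terms is an isomorphism. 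The main technical point, though essentially routine, is bookkeeping the compatibility of the canonical orientations $\omega_x$ and the twists $\L_x$ through the limit, which is handled uniformly by Proposition \ref{prop:equivtriple}.
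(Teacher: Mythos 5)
Your proof is correct in outline, but it takes a genuinely different route from the paper's. The paper's proof is a two-line categorical argument: since $(X,Z,\L)$ can be viewed as a constant projective system with the same limit as $(X_\lambda,Z_\lambda,\L_\lambda)$, the equivalence of categories of Proposition \ref{prop:equivtriple} makes the canonical morphism of pro-objects (given by the projections) an isomorphism in $\tP$, and applying the extended presheaf $\overline{\tch{i}}$ yields the claim. You instead unwind the definition through the Rost--Schmid complex and verify term-by-term that each degree commutes with the filtered colimit, using exactness of filtered colimits, the identification of $X$ with $\varprojlim X_\lambda$ as a topological space, $k(x)=\varinjlim k(x_\lambda)$, and the fact that (twisted) Milnor--Witt $\K$-theory of fields commutes with filtered colimits by its presentation. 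Your approach is longer but makes explicit exactly which properties of Chow--Witt groups are being used, and in particular it supplies a direct definition of $\tch{i}_Z(X,\L)$ for the (non-finite-type) limit scheme via its Rost--Schmid complex, against which the colimit is compared; the paper's argument is slicker but leans on the reader accepting that the presheaf is already defined and functorial on a category containing both the $X_\lambda$ and their limit.

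One bookkeeping point in your argument deserves a more careful justification. When comparing $\varinjlim_\lambda \bigoplus_{y\in X_\lambda^{(n)}\cap Z_\lambda}\KMW_{i-n}(\cdots)$ with $\bigoplus_{x\in X^{(n)}\cap Z}\KMW_{i-n}(\cdots)$, the delicate direction is not that several points of $X_\lambda$ may lie over one point of $X_\mu$ (which finiteness of \'etale fibers handles), but that a point $y\in X_\lambda^{(n)}\cap Z_\lambda$ may fail to be in the image of $X$ at all; its summand must then die in the colimit. This follows because the image of $X$ in $X_\lambda$ is the intersection of the pro-constructible images of the $X_\mu$, so such a $y$ already has empty preimage in some $X_\mu$ (by \cite[8.3.4]{EGAIV3} and the Noetherian hypothesis), whence its summand maps to zero from that index on. With that point made explicit, your term-by-term comparison goes through and the argument is complete.
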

\begin{proof}
Since $(X,Z,\L)$ can be considered as a constant projective system, it follows from the equivalence of categories of Proposition \ref{prop:equivtriple}.
\end{proof}

\subsection{Representable presheaves} \label{sec:representable}

\begin{defin}
Let $X \in \sm k$. We denote by $\tZX X$ the presheaf $\cor k(-,X)$. If $x:\spec k \to X$ is a rational point, we denote by $\tZX {X,x}$ the cokernel of the morphism $\tZX {\spec k} \to \tZX X$ (which is split injective). More generally, let $X_1,\ldots,X_n$ be smooth schemes pointed respectively by the rational points $x_1,\ldots,x_n$. We define $\tZX {(X_1,x_1)\wedge\ldots\wedge (X_n,x_n)}$ as the cokernel of the split injective map
\[
\xymatrix@C=20ex{
\displaystyle \bigoplus_i \tZX {X_1\times \ldots \times X_{i-1}\times X_{i+1}\times \ldots \times X_n}\ar[r]^-{\id_{X_1}\times \cdots \times x_i \times \cdots \times \id_{X_n}} &  \tZX {X_1\times\ldots\times X_n}.
}
\]
\end{defin}

\begin{exem}
It follows from Example \ref{ex:basic} that $\tZX {\spec k}=\sKMW_0$, and we write it $\tZ(k)$ or even $\tZ$ when there is no possible confusion on the base field.
\end{exem}

Our next goal is to check that for any smooth scheme $X$, the presheaf $\tZX X$ is actually a sheaf in the Zariski topology. We start with an easy lemma.

\begin{lem}\label{lem:unramified}
Let $Y \in \sm k$ be connected, with function field $k(Y)$. Then for any $X \in \sm k$, the homomorphism $\cor k(Y,X)\to \cor k(k(Y),X)$ is injective.
\end{lem}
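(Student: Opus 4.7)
The plan is to realize $\cor k(k(Y),X)$ as a filtered colimit of the groups $\cor k(U,X)$ indexed by the nonempty open subsets $U\subset Y$, and then reduce to the already-established injectivity of Lemma \ref{lem:restrictioninj}.

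Since $Y$ is smooth and connected, it is irreducible, so the nonempty open subschemes of $Y$ form a cofiltered system (under reverse inclusion) of smooth quasi-compact $k$-schemes with affine étale transition maps (they are open immersions), whose inverse limit in $\barsm k$ is $\spec k(Y)$. By the extension-to-limits procedure of Section \ref{sec:limits} applied to the presheaf $\cor k(-,X)$ on $\sm k$, this gives
\[
\cor k(k(Y),X)=\overline{\cor k(-,X)}(\spec k(Y))=\varinjlim_{\emptyset\neq U\subset Y} \cor k(U,X),
\]
where the transition maps are the restriction maps defined just above Lemma \ref{lem:restrictioninj}. The homomorphism of the statement is nothing else than the canonical map from the element $U=Y$ of the system to its colimit.

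Now given any nonempty open $U\subset Y$, Lemma \ref{lem:restrictioninj} asserts precisely that the restriction $\cor k(Y,X)\to \cor k(U,X)$ is injective. Suppose that $\alpha\in \cor k(Y,X)$ vanishes in the colimit $\cor k(k(Y),X)$. By the definition of a filtered colimit of abelian groups, this means that there exists a nonempty open $U\subset Y$ such that the image of $\alpha$ under the restriction $\cor k(Y,X)\to \cor k(U,X)$ is already zero. By Lemma \ref{lem:restrictioninj} this forces $\alpha=0$, proving the desired injectivity.

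The only mild subtlety — and the step most worth checking carefully — is the identification of $\cor k(k(Y),X)$ with the filtered colimit of the $\cor k(U,X)$, since $\cor k(-,X)$ is itself defined as a filtered colimit over admissible subsets of the Chow-Witt groups with support: one must commute the two colimits. This is legitimate because filtered colimits commute with filtered colimits, and because the extension-to-limits functor of Section \ref{sec:limits} is applied level-wise to the presheaf $\cor k(-,X)$, exactly as announced after the statement of Proposition \ref{prop:equivtriple}. Granting this, the rest of the argument is formal.
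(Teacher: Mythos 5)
Your proof is correct and follows essentially the same route as the paper: the paper's one-line argument also identifies $\cor k(k(Y),X)$ as the filtered colimit of the $\cor k(U,X)$ over nonempty opens $U\subset Y$ and invokes Lemma \ref{lem:restrictioninj} for the injectivity of the transition maps, which immediately gives injectivity of the map from any stage into the colimit. Your more explicit unwinding (an element dying in a filtered colimit must die at some finite stage) is exactly what is implicit there.
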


\begin{proof}
It follows from Lemma \ref{lem:restrictioninj} that all transition maps in the system converging to $\cor k(k(Y),X)$ are injective. 
\end{proof}

\begin{prop}
For any $X \in \sm k$, the presheaf $\tZX X$ is a sheaf in the Zariski topology.
\end{prop}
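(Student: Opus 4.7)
The plan is to verify the Zariski sheaf axiom directly: given a cover $\{U_i \to U\}_{i \in I}$ in $\sm{k}$, I want to show that
\[
\tZX X(U) \to \prod_i \tZX X(U_i) \rightrightarrows \prod_{i,j} \tZX X(U_i \cap U_j)
\]
is an equalizer. After decomposing into connected components we may assume $U$ is irreducible. Separatedness then follows immediately from Lemma \ref{lem:unramified}: fixing any non-empty $U_i$, we have $k(U_i) = k(U)$, so the composite $\cor k(U,X) \to \cor k(U_i,X) \to \cor k(k(U),X)$ factors through the injection $\cor k(U,X)\hookrightarrow \cor k(k(U),X)$ of Lemma \ref{lem:unramified}; thus $\cor k(U,X) \to \cor k(U_i,X)$ is itself injective, so an $\alpha$ vanishing on every $U_i$ must vanish.

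For gluing a compatible family $\alpha_i \in \cor k(U_i,X)$, my first task is to assemble their supports $T_i$ (Definition \ref{def:support}), which are admissible by Lemma \ref{lem:supportadmis}. By Lemma \ref{lem:supporttoopen} they satisfy $T_i \cap (U_j \times X) = T_j \cap (U_i \times X)$ in $(U_i \cap U_j) \times X$, and since $U \mapsto \Adm(U,X)$ is a Zariski sheaf (Lemma \ref{lem:admissiblesheaf}), the $T_i$ glue to a single admissible subset $T \in \Adm(U,X)$.

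Assuming $X$ equidimensional of dimension $d$, the inclusion
\[
\chst d{T_i}{U_i\times X}{\ome{U_i\times X}{U_i}} \subset \bigoplus_{x \in T_i^{(0)}} \KMW_0\bigl(k(x),\omega_x\otimes (\ome{U_i\times X}{U_i})_x\bigr)
\]
from Section \ref{sec:fGW} lets me view each $\alpha_i$ as a tuple of classes indexed by generic points of the components of $T_i$. Since the $T_i$ glue to $T$, these sets patch to the set $T^{(0)}$ of generic points of components of $T$, and the overlap compatibility determines a unique element $\alpha \in \bigoplus_{x \in T^{(0)}} \KMW_0\bigl(k(x),\omega_x\otimes (\ome{U\times X}{U})_x\bigr)$ whose restriction to each $U_i \times X$ recovers $\alpha_i$.

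The main obstacle is to verify that $\alpha$ actually lies inside the subgroup $\chst d{T}{U\times X}{\ome{U\times X}{U}}$, i.e., that it satisfies the Rost-Schmid cocycle condition $d^d(\alpha)=0$. Here one uses that, since $T$ has codimension $d$ in $U\times X$, the piece of the Rost-Schmid complex with support in $T$ starts in degree $d$, so the Chow-Witt group with support $T$ equals $\ker(d^d)$. The differential $d^d$ is a sum of residue maps indexed by codimension-$(d+1)$ points $y$ of $U\times X$ lying in $T$, and the vanishing of the $y$-component is purely local around $y$. For each such $y$, its image in $U$ lies in some $U_i$, and on $U_i \times X$ the $y$-component of $d^d(\alpha)$ agrees with that of $d^d(\alpha_i)$, which vanishes because $\alpha_i$ is already a class in $\chst d{T_i}{U_i\times X}{\ome{U_i\times X}{U_i}}$. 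Hence $d^d(\alpha)=0$, and $\alpha$ is the desired global section, completing the verification.
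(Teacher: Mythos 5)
Your proof is correct, and it follows the same overall architecture as the paper's: injectivity via restriction to the generic point (Lemma \ref{lem:unramified}), gluing of the supports via Lemmas \ref{lem:supportadmis}, \ref{lem:supporttoopen} and \ref{lem:admissiblesheaf}, and then a gluing of the classes themselves. The only place you diverge is the final step. The paper reduces to a cover by \emph{two} opens and disposes of the gluing in one line by invoking the long exact sequence of localization for Chow--Witt groups with support in $T$ (the point being that $T\setminus(U\times X)$ has codimension $\geq d+1$, so the relevant boundary terms vanish); you instead treat an arbitrary cover directly and verify the Rost--Schmid cocycle condition $d^d(\alpha)=0$ pointwise, using that each component of the differential at a codimension-$(d+1)$ point is computed locally and hence agrees with the corresponding component of some $d^d(\alpha_i)=0$. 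These are two phrasings of the same underlying computation in the Rost--Schmid complex, but yours is more explicit and self-contained (it does not need the standard reduction of the Zariski sheaf condition to two-element covers), at the cost of unwinding the definition of $\chst dT{U\times X}{\omega_{U\times X/U}}$ as the kernel of the first differential of the complex with supports. Both arguments are sound.
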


\begin{proof}
It suffices to prove that for any cover of a scheme $Y$ by two Zariski open sets $U$ and $V$, the equalizer sequence
\[
\xymatrix@1{
\cor k(Y,X) \ar[r] & \cor k(U,X) \coprod \cor k(V,X) \ar@<-.5ex>[r] \ar@<.5ex>[r] & \cor k(U \cap V,X)
}
\]
is exact in the well-known sense. Injectivity of the map on the left follows from Lemma \ref{lem:unramified}. To prove exactness in the middle, let $\alpha$ and $\beta$ be elements in the middle groups, equalizing the right arrows and with respective supports $E \in \Adm(U,X)$ and $F \in \Adm(V,X)$ by Lemma \ref{lem:supportadmis}. Since $\alpha$ restricted to $V$ is $\beta$ restricted to $U$, we must have $E \cap V=F \cap U$ by Lemma \ref{lem:supporttoopen}. By Lemma \ref{lem:admissiblesheaf}, the closed set $T=E \cup F$ is admissible, and the conclusion now follows from the long exact sequence of localization for Chow-Witt groups with support in $T$. 
\end{proof}

\begin{exem}
In general, the Zariski sheaf $\tZX X=\cor k(-,X)$ is not a Nisnevich sheaf (and therefore not an \'etale sheaf either). Set $\A^1_{a_1,\ldots,a_n}:=\A^1\setminus \{a_1,\ldots,a_n\}$ for any rational points $a_1,\ldots,a_n\in \A^1(k)$ and consider the Nisnevich square
\[
\xymatrix{\A^1_{0,1,-1}\ar[r]\ar[d]_-g & \A^1_{0,-1}\ar[d]^-f \\
\A^1_{0,1}\ar[r] & \A^1_0}
\]
where the morphism $f:\A^1_{0,-1}\to \A^1_0$ is given by $x\mapsto x^2$, the horizontal maps are inclusions and $g$ is the base change of $f$. 

We now show that $\tZX {\A^1_{0,1}}$ is not a Nisnevich sheaf. In order to see this, we prove that the sequence
\[
\tZX {\A^1_{0,1}}(\A^1_{0})\to \tZX {\A^1_{0,1}}(\A^1_{0,1})\oplus \tZX {\A^1_{0,1}}(\A^1_{0,-1})\to \tZX {\A^1_{0,1}}(\A^1_{0,1,-1})
\]
is not exact in the middle. Let $\Delta:\A^1_{0,1}\to \A^1_{0,1} \times \A^1_{0,1}$ be the diagonal embedding. It induces an isomorphism
\[
\sKMW_0(\A^1_{0,1})\to \chst 1{\Delta(\A^1_{0,1})}{\A^1_{0,1}\times \A^1_{0,1}}{\omega_{\A^1_{0,1}}}
\]
and thus a monomorphism $\sKMW_0(\A^1_{0,1})\to \tZX {\A^1_{0,1}}(\A^1_{0,1})$. Since the restriction homomorphism $\sKMW_0(\A^1_0)\to \sKMW_0(\A^1_{0,1})$ is injective, it follows that the class $\eta\cdot [t]=-1+\langle t\rangle $ is non trivial in $\sKMW_0(\A^1_{0,1})$ and its image $\alpha_t$ in $\tZX {\A^1_{0,1}}(\A^1_{0,1})$ is also non trivial. We claim that its restriction in $\tZX {\A^1_{0,1}}(\A^1_{0,1,-1})$ is trivial. Indeed, consider the Cartesian square
\[
\xymatrix{\A^1_{0,1,-1} \ar[r]^-{\Gamma_g}\ar[d]_-g & \A^1_{0,1,-1}\times \A^1_{0,1}\ar[d]^-{(g\times 1)} \\
\A^1_{0,1}\ar[r]_-{\Delta} & \A^1_{0,1}\times \A^1_{0,1}}
\]
From Example \ref{ex:flat_example}, we know that the restriction of $\alpha_t$ to $\tZX {\A^1_{0,1}}(\A^1_{0,1,-1})$ is represented by $(g\times 1)\Delta_*(-1+\langle t\rangle)$. By base change, the latter is $(\Gamma_g)_*g^*(\alpha_t)$. Now we have $g^*(-1+\langle t\rangle)=-1+\langle t^2\rangle=0$ in $\sKMW_0(\A^1_{0,1,-1})$. In conclusion, we see that the correspondence $(\alpha_t,0)$ is in the kernel of the homomorphism
\[
\tZX {\A^1_{0,1}}(\A^1_{0,1})\oplus \tZX {\A^1_{0,1}}(\A^1_{0,-1})\to \tZX {\A^1_{0,1}}(\A^1_{0,1,-1}).
\] 
To conclude that $\tZX {\A^1_{0,1}}$ is not a sheaf, it then suffices to show that $\alpha_t$ cannot be the restriction of a correspondence in $\tZX {\A^1_{0,1}} (\A^1_0)$. To see this, observe first that the restriction map $\Adm(\A^1_0,\A^1_{0,1})\to \Adm(\A^1_{0,1},\A^1_{0,1})$ is injective as well as the homomorphism $\tZX {\A^1_{0,1}}(\A^1_0)\to \tZX {\A^1_{0,1}}(\A^1_{0,1})$. Suppose then that $\beta\in \tZX {\A^1_{0,1}}(\A^1_0)$ is a correspondence whose restriction is $\alpha_t$, and let $T=\mathrm{supp}(\beta)$. From the above observations, we see that $\Delta(\A^1_{0,1})=\mathrm{supp}(\alpha_t)=T\cap (\A^1_{0,1}\times \A^1_{0,1})$. In particular, we see that $T$ is irreducible and its generic point is the generic point of the (transpose of the) graph of the inclusion $\A^1_{0,1}\to \A^1_0$. But the graph is closed, but not finite over $\A^1_0$. It follows that $\beta$ doesn't exist and thus that $\tZX {\A^1_{0,1}}$ is not a Nisnevich sheaf.
\end{exem}

Next, recall that the presheaf $\ucor k(\_,X):=\Ztr X$ is also a presheaf with MW-transfers (using the functor $\cor k\to \ucor k$). The morphism $\tZX X\to \Ztr X$ is easily seen to be a morphism of MW-presheaves. 

\begin{lem}
The morphism of MW-presheaves $\tZX X\to \Ztr X$ induces an epimorphism of Zariski sheaves.
\end{lem}

\begin{proof}
Let $Y$ be the localization of a smooth scheme at a point. We have to show that the map $\tZX X(Y)\to \Ztr X(Y)$ is surjective. It is sufficient to prove that for any elementary correspondence $T\subset Y\times X$, the map
\[
\chst {d_X}T{Y\times X}{\omega_X}\to \CH^{d_X}_T(Y\times X)=\Z
\]
is surjective. Note that the map $T\to Y\times X\to Y$ is finite and surjective, and therefore that $T$ is the spectrum of a semi-local domain. Let $\tilde T$ be the normalization of $T$ into its field of fractions. Note that the singular locus of $\tilde T$ is of codimension at least $2$ and therefore that its image in $Y\times X$ is of codimension at least $d_X+2$. As both $\chst {d_X}T{Y\times X}{\omega_X}$ and $\CH^{d_X}_T(Y\times X)$ are invariant if we remove points of codimension $d_X+2$, we may suppose that $\tilde T$ is smooth and therefore that we have a well-defined push-forward
\[
i_*:\cht 0{\tilde T}{\omega_{{\tilde T}/k}\otimes i^*L}\to \chst {d_X}T{Y\times X}{\omega_{Y\times X/k}\otimes L} 
\] 
for any line bundle $L$ on $Y\times X$, where $i$ is the composite $\tilde T\to T\subset Y\times X$. As $\tilde T$ is semi-local, we can find a trivialization of $\omega_{{\tilde T}/k}\otimes i^*L$ and get a push-forward homomorphism
\[
i_*:\ch 0{\tilde T}\to \chst {d_X}T{Y\times X}{\omega_{Y\times X/k}\otimes L} 
\] 
Now, $\langle 1\rangle\in \ch 0{\tilde T}$ and we can use the commutative diagram
\[
\xymatrix{\ch 0{\tilde T}\ar[r]^-{i_*}\ar[d] & \chst {d_X}T{Y\times X}{\omega_{Y\times X/k}\otimes L}\ar[d] \\
\CH^0(\tilde T)\ar[r]_-{i_*} & \CH^{d_X}_T(Y\times X)}
\]
to conclude that the composite 
\[
\ch 0{\tilde T}\to \chst {d_X}T{Y\times X}{\omega_{Y\times X/k}\otimes L}\to \CH^{d_X}_T(Y\times X)
\]
is surjective for any line bundle $L$. The claim follows.
\end{proof}

\begin{rem}
The same proof works in both the Nisnevich and the \'etale topologies.
\end{rem}

Now, we turn to the task of determining the kernel of the morphism $\tZX X\to \Ztr X$. With this in mind, recall that we have for any $n\in \N$ an exact sequence of sheaves
\[
0\to \mathrm{I}^{n+1}\to \KMW_n \to \KM_n \to 0.
\] 
If $T\subset Y\times X$ is of pure codimension $d_X$, the long exact sequence associated to the previous exact sequence reads as
\[
\H^{d_X}_T(Y\times X,\mathrm{I}^{d_X+1},\omega_X)\to \chst {d_X}T{Y\times X}{\omega_X} \to \CH^{d_X}_T(Y\times X) \to \ldots
\]
and the left-hand morphism is injective since there are no points of codimension $d_X-1$ supported on $T$. Moreover, if $\alpha\in \cor k(Y^\prime,Y)$ and $\pi(\alpha)\in \ucor k(Y^\prime,Y)$ is its image in $\ucor k$ under the usual functor, the diagram
\[
\xymatrix{\chst {d_X}T{Y\times X}{\omega_X} \ar[r]\ar[d]_-{\alpha^*} & \CH^{d_X}_T(Y\times X)\ar[d]^-{\pi(\alpha)^*} \\
\chst {d_X}T{Y^\prime\times X}{\omega_X} \ar[r] & \CH^{d_X}_T(Y^\prime\times X)}
\]
commutes, where the vertical arrows are the respective composition with $\alpha$ and $\pi(\alpha)$. These observations motivate the following definition.

\begin{defin}
For any smooth connected schemes $X,Y$, we set 
\[
\icor k(Y,X)=\lim_{T\in \Adm(Y,X)} \H^{d_X}_T(Y\times X,\mathrm{I}^{d_X+1},\omega_X).
\]
As before, we extend this definition to any smooth scheme $X,Y$ by additivity. The above diagram shows that $\icor k(\_,X)$ is an element of $\psh k$ that we denote by $\itZX X$.
\end{defin}

The above arguments prove that we have an exact sequence of Zariski sheaves
\[
0\to \itZX X \to \tZX X\to \Ztr X\to 0.
\]

\begin{rem}
Instead of $\icor k(Y,X)$, we could consider the abelian group
\[
\lim_{T\in \Adm(Y,X)} \H^{d_X}_T(Y\times X,\mathrm{I}^{d_X},\omega_X)=\lim_{T\in \Adm(Y,X)} \H^{d_X}_T(Y\times X,\mathrm{W},\omega_X)
\]
In this fashion, we would obtain a category of sheaves with Witt-transfers. We postpone the study of this category to further work.
\end{rem}

The category $\psh k$ of presheaves with MW-transfers admits a unique symmetric monoidal structure such that the embedding $\cor k\to  \psh k$ given by $X\mapsto \tZX X$ is symmetric monoidal (\cite[\S 1.2]{Deglise16}). We denote by $F\otimes G$ the tensor product of two presheaves, and we observe that $\tZX X\otimes \tZX Y=\tZX {X\times Y}$ by definition. It turns out that $\psh k$ is a closed monoidal category, with internal Hom functor $\uHom$ determined by $\uHom(\tZX X,F)=F(X\times \_)$. The proof of the next result is easy and we thus omit it.

\begin{lem}
Suppose that $F\in \psh k$ is a $\tau$-sheaf. Then $\uHom(\tZX X,F)$ is also a $\tau$-sheaf for any $X \in \sm k$.
\end{lem}

\begin{defin}
Let $F\in\psh k$ and $n \in \N$. Then the $n$-th contraction of $F$, denoted by $F_{-n}$, is the presheaf $\uHom(\tZX {(\gm,1)^{\wedge n}},F)$. Thus, $F_{-n}=(F_{-n+1})_{-1}$.
\end{defin}

\begin{exem}\label{ex:MilnorWittcontraction}
If $F=\sKMW_j$ for some $j\in\Z$, then it follows from Lemma \ref{lem:explicitcontraction} that $F_{-1}=\sKMW_{j-1}$.
\end{exem}

\begin{defin}
A presheaf $F$ in $\psh k$ is said to be homotopy invariant if the map
\[
F(X)\to F(X\times\A^1)
\]
induced by the projection $X\times \A^1\to X$ is an isomorphism for any $X\in \sm{k}$.
\end{defin}

\begin{exem}
We already know that $\tZX k$ coincides with the Nisnevich sheaf $\sKMW_0$. It follows from \cite[Corollaire 11.3.3]{Fasel08a} that this sheaf is homotopy invariant.
\end{exem}

\subsection{The module structure}

Recall from Example \ref{ex:action} that the Zariski sheaf $\tZX X$ is endowed with an action of a left $\sKMW_0(X)$-module for any smooth scheme $X$. Pulling back along the structural morphism $X\to \spec k$, we obtain a ring homomorphism $\KMW_0(k)\to \sKMW_0(X)$ and it follows that $\tZX X$ is a sheaf of $\KMW_0(k)$-modules. If $X\to Y$ is a morphism of smooth schemes, it is readily verified that the morphism $\tZX X\to \tZX Y$ is a morphism of sheaves of $\KMW_0(k)$-modules. 

Let now $F$ be an object of $\psh k$. If $X$ is a smooth scheme, the presheaf $\uHom (\tZX X,F)$ is naturally endowed with the structure of a right $\KMW_0(k)$-module by precomposing with the morphism $\tZX X\to \tZX X$ induced by some $\alpha\in \KMW_0(k)$. As $\KMW_0(k)$ is commutative, it follows that $\uHom (\tZX X,F)$ is also a sheaf of (left) $\KMW_0(k)$-modules. If $X\to Y$ is a morphism of smooth schemes, a direct computation shows that the induced morphism $\uHom (\tZX Y,F)\to \uHom (\tZX X,F)$ is a morphism of $\KMW_0(k)$-modules.

\section{Motivic cohomology}

We define the pointed scheme $\gmpt =(\gm,1)$.

\begin{defin}
For any $q\in\Z$, we define the Zariski sheaf $\tZ\{q\}$ by
\[
\tZ\{q\}:=
\begin{cases} \tZX {\gmpt^{\wedge q}} & \text{if $q>0$.} \\
\tZX k& \text{if $q=0$}. \\
\uHom (\gmpt^{\wedge q}, \tZX k) & \text{if $q<0$.}
\end{cases}
\]
\end{defin}
Note that the sheaves considered are all sheaves of $\KMW_0(k)$-modules and that for $q<0$,the sheaf $\tZ\{q\}$ is the $(-q)$-th contraction of $\tZX k=\KMW_0$. It follows thus from Example \ref{ex:MilnorWittcontraction} that $(\sKMW_0)_{q}=\sKMW_q=\mathbf{W}$ for $q<0$, where the latter is the Zariski sheaf associated to the presheaf $X\mapsto \W(X)$ (Witt group).

\begin{rem}
Of course, we could have defined $\tZ\{q\}$ to be $\mathbf{W}$ when $q<0$ from the beginning. The advantage of our definition is to make the product structure defined below more explicit.  
\end{rem}

\subsection{Motivic sheaves}\label{section:motivic}

Let $\Delta^{\bullet}$ be the cosimplicial object on $\sm{k}$ defined by 
\[
\Delta^n:=\spec {k[x_0,\ldots,x_n]/(\sum_{i=0}^nx_i-1)}.
\]
and the usual faces and degeneracy maps. Given any $F\in \psh k$, we get a simplicial object $\uHom(\tZX {\Delta^{\bullet}},F)$ in $\psh k$ which is a simplicial sheaf in case $F$ is one. 

\begin{defin}
The \emph{Suslin-Voevodsky singular construction on $F$} is the complex associated to the simplicial object $\uHom(\tZX {\Delta^{\bullet}},F)$. Following the conventions, we denote it by $\Cstar F$.
\end{defin}

The following lemma is well-known and we let the proof to the reader.

\begin{lem}
Suppose that $F$ is a homotopy invariant presheaf on $\cor k$. Then the natural homomorphism $F\to \Cstar F$ is a quasi-isomorphism of complexes (here $F$ is considered as a complex concentrated in degree $0$).
\end{lem}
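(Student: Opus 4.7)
My plan is to show that at each smooth scheme $X$, the natural map $F(X)\to (\Cstar F)(X)$, identifying $F(X)$ with $(C_0F)(X)=F(X\times\Delta^0)$ and placing $F(X)$ in degree $0$, is a quasi-isomorphism of chain complexes. Since $\Delta^0=\spec k$, this natural map is literally the identity in degree $0$, so I only need to check that the homology of $(\Cstar F)(X)$ is $F(X)$ in degree $0$ and zero in positive degrees.

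First I would establish an iterated version of homotopy invariance: picking an isomorphism $\Delta^n\cong\aone^n$ (for instance by eliminating the coordinate $x_0$), the projection $X\times\Delta^n\to X$ factors as a composition of trivial $\aone$-bundle projections $X\times\aone^i\to X\times\aone^{i-1}$. Applying the homotopy invariance hypothesis at each stage, the induced pullback $\pi_n^*\colon F(X)\to F(X\times\Delta^n)$ is an isomorphism for every $n\geq 0$.

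Next I would note that the unique morphism $\Delta^n\to\Delta^0$ is compatible with the cosimplicial structure: for every $\theta\colon[m]\to[n]$ in $\Delta$, the two composites $\Delta^m\to\Delta^n\to\Delta^0$ and $\Delta^m\to\Delta^0$ agree (both being the unique map to a point). Hence the maps $\pi_n^*$ assemble into a morphism of simplicial abelian groups from the constant simplicial object with value $F(X)$ to $F(X\times\Delta^\bullet)$, and by the previous step this morphism is an isomorphism at each level, hence an isomorphism of simplicial abelian groups.

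The final step is a direct calculation on the constant object: the chain complex associated to the constant simplicial abelian group on $A$ has $A$ in every degree and differential $\sum_{i=0}^{n}(-1)^i\id$, which equals $\id$ for $n$ even and $0$ for $n$ odd. Its homology is therefore $A$ in degree $0$ and zero otherwise, and the inclusion $A\hookrightarrow C_0=A$ is a quasi-isomorphism. Transporting along the isomorphism of simplicial abelian groups gives the claim at each $X$, and everything is natural in $X$. The argument is standard (compare \cite[Lemma 2.17]{Mazza06}); the only mildly delicate point is that the proof produces a \emph{levelwise} isomorphism of simplicial objects rather than the more usual chain homotopy, but this arises cleanly from iterated homotopy invariance, so I expect no real obstacle.
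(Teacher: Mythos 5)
Your proof is correct, and since the paper explicitly declares this lemma well-known and leaves the proof to the reader, your argument is exactly the standard one that is being left implicit: iterated homotopy invariance gives that each $\pi_n^*\colon F(X)\to F(X\times\Delta^n)$ is an isomorphism, these assemble (by compatibility with the unique maps $\Delta^n\to\Delta^0$) into a levelwise isomorphism from the constant simplicial abelian group on $F(X)$ to $F(X\times\Delta^\bullet)$, and the associated complex of the constant object has homology $F(X)$ concentrated in degree $0$. All the details you flag (the identification $\Delta^n\cong\aone^n$, the sectionwise verification appropriate for complexes of presheaves, and the degree-$0$ compatibility with the natural map $F=\Ci 0 F$) check out.
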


\begin{defin}
For any integer $q\in\Z$, we define $\tZ(q)$ as the complex of Zariski sheaves of $\KMW_0(k)$-modules $\Cstar \tZ\{q\}[-q]$.
\end{defin}

As opposed to \cite{Mazza06}, we use the homological notation and we then have by convention $\tZ(q)_i=\Ci {q+i}\tZ\{q\}$. It follows that $\tZ(q)$ is bounded below for any $q\in\Z$.

\begin{defin} \label{def:genmotiviccohom}
The \emph{MW-motivic cohomology groups} $\HMW^{p,q}(X,\Z)$ are defined to be the hypercohomology groups 
\[
\HMW^{p,q}(X,\Z):=\mathbb H^p_{\mathrm{Zar}}(X,\tZ(q)).
\] 
These groups have a natural structure of $\KMW_0(k)$-modules.
\end{defin}

The groups $\HMW^{p,q}(X,\Z)$ are thus contravariant in $X \in \sm k$. By definition, we thus have $\HMW^{p,0}(X,\Z)=\H^{p}(X,\KMW_0)$ while the identification $(\sKMW_0)_{q}=\sKMW_q=\mathbf{W}$ for $q<0$ yields $\HMW^{p,q}(X,\Z)=\H^{p-q}(X,\mathbf{W})$ for $q<0$.

\begin{rem}
The presheaves $X\mapsto \HMW^{p,q}(X,\Z)$ are in fact presheaves with MW-transfers for any $p,q\in\Z$. This follows from \cite[Corollary 3.2.16]{Deglise16}.
\end{rem}

Recall next that we have an exact sequence of Zariski sheaves
\[
0\to \itZX X \to \tZX X\to \Ztr X\to 0
\]
for any smooth scheme $X$. If $X\to Y$ is a morphism of smooth schemes, we get a commutative diagram
\[
\xymatrix{0\ar[r] &  \itZX X \ar[r]\ar[d] & \tZX X\ar[r]\ar[d]  & \Ztr X\ar[r]\ar[d]  & 0 \\
0\ar[r] &  \itZX Y \ar[r] & \tZX Y\ar[r] & \Ztr Y\ar[r] & 0}
\]
with exact rows. It follows from \cite[Lemma 2.13]{Mazza06} that we get for any $q\geq 0$ an exact sequence of Zariski sheaves
\[
0\to \itZX {\gmpt^{\wedge q}} \to \tZX {\gmpt^{\wedge q}}\to \Ztr {\gmpt^{\wedge q}}\to 0.
\]
Now, the Suslin-Voevodsky construction is exact, and it follows that we have an exact sequence of complexes of Zariski sheaves
\[
0\to \Cstar(\itZX {\gmpt^{\wedge q}})[-q] \to \tZ(q)\to \Z(q)\to 0.
\]
We let the reader check that the complexes involved are complexes of $\KMW_0(k)$-modules.
\begin{defin}
For any $q\geq 0$, we set $\ItZ(q)=\Cstar(\itZX {\gmpt^{\wedge q}})[-q]$. We denote by 
\[
\HI^{p,q}(X,\Z)=\mathbb H^p_{\mathrm{Zar}}(X,\ItZ(q))
\]
the hypercohomology groups of the complex $\ItZ(q)$.
\end{defin}

The above exact sequence then yields a long exact sequence of motivic cohomology groups
\[
\ldots \to \HI^{p,q}(X,\Z)\to \HMW^{p,q}(X,\Z)\to \H^{p,q}(X,\Z)\to \HI^{p+1,q}(X,\Z)\to \ldots
\]
for any $q\geq 0$.

\subsection{Base change of the ground field}

The various constructions considered until now are functorial with respect to the ground field $k$, and we now give details about some of the aspects of this functoriality. Let $L$ be a field extension of $k$, both $L$ and $k$ assumed to be perfect. 
For any scheme $X$ over $k$, let $X_L=\spec L\times_{\spec k} X$ be its extension to $L$. When $X$ is smooth, the canonical bundle $\omega_X$ pulls-back over $X_L$ to the canonical bundle $\omega_{X_L}$ of $X_L$. 

By conservation of surjectivity and finiteness by base change, the pull-back induces a map $\Adm(X,Y) \to \Adm(X_L,Y_L)$. From the contravariant functoriality of Chow-Witt groups with support, passing to the limit, one then immediately obtains an \emph{extension of scalars} functor
\[
\xymatrix@1{
\cor k \ar[r]^-{\ext_{L/k}} & \cor L
}
\]
sending an object $X$ to $X_L$. This functor is monoidal, since $(X \times_k Y)_L \simeq X_L \times_L Y_L$ canonically.

When $L/k$ is finite, $L/k$ is automatically separable since $k$ is perfect (and $L$ is automatically perfect). Viewing an $L$-scheme $Y$ as a $k$-scheme $Y_{|k}$ by composing its structural morphism with the smooth morphism $\spec L \to \spec k$ defines a \emph{restriction of scalars} functor $\res_{L/K}: \sm L \to \sm k$. For any $L$-scheme $Y$ and $k$-scheme $X$, there are morphisms 
\[
\unit_Y:  Y \to L \times_k Y = (Y_{|k})_L \quad \text{and} \quad  \counit_X:(X_L)_{|k}= L \times_k X \to X
\]
induced by the structural morphism of $Y$ and $\id_Y$ for $\unit$, and the projection to $X$ for $\counit$. 

\begin{lem} \label{lem:finiterestriction}
Let $X \in \sm k$ and $Y,Y' \in \sm L$.
\begin{enumerate}
\item \label{item:adjsm} The morphisms of functors $\unit$ and $\counit$ given on objects by $\unit_Y$ and  $\counit_X$ define an adjunction $(\res_{L/k},\ext_{L/k})$. 
\item \label{item:proj} The natural morphism $(Y \times_L X_L)_{|k} \to Y_{|k} \times_k X$ is an isomorphism (adjoint to the morphism $\unit_Y \times_L \id_{X_L}$).
\item \label{item:unit} The morphism $\unit_Y$ is a closed embedding of codimension $0$, identifying $Y$ with the connected components of $(Y_{k})_L$ living over $\spec L$, diagonally inside $\spec L \times_k \spec L$.
\item \label{item:closedsm} The natural morphism $(Y \times_L Y')_{|k} \to Y_{|k} \times_k Y'_{|k}$ is a closed embedding of codimension $0$, thus identifying the source as a union of connected components of the target.
\end{enumerate}
\end{lem}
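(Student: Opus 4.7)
The plan is to treat these four items essentially formally, relying on the universal property of fibre products together with the single geometric input that $\spec L \times_k \spec L$ splits as $\spec L \sqcup \spec R$ because $L/k$ is finite separable (which is automatic since $k$ is perfect). I would first record this splitting and then derive each of the four claims in a logical order: (2) from associativity of fibre products, (3) from the splitting of $\spec L \otimes_k \spec L$, (4) from (3) via base change of (2), and (1) as a purely categorical consequence.

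For (2) I would observe that by definition $X_L = \spec L \times_k X$, so
\[
Y \times_L X_L = Y \times_L (\spec L \times_k X) \simeq Y \times_k X
\]
using that $Y \to \spec L$ makes the first factor of $X_L$ redundant. Passing to $k$-schemes on the left is a no-op, and the identification so obtained is precisely the morphism adjoint to $\unit_Y \times_L \id_{X_L}$. For (3), the key input is that $L \otimes_k L$ is a finite \'etale $L$-algebra containing $L$ diagonally as a direct factor; indeed separability of $L/k$ gives an idempotent decomposition $L \otimes_k L \simeq L \times R$ whose first projection is the multiplication map. Base-changing this decomposition along the structural morphism $Y \to \spec L$ shows that $(Y_{|k})_L = \spec L \times_k Y$ splits as $Y \sqcup (Y \times_{\spec L} \spec R)$, with $\unit_Y$ identifying $Y$ with the first summand; this is exactly the claim that $\unit_Y$ is a clopen immersion of codimension zero onto the components living over the diagonal copy of $\spec L$. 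For (4), writing $Y \times_L Y' = (Y \times_k Y') \times_{\spec L \otimes_k \spec L} \spec L$ and applying the splitting from (3) realises $(Y \times_L Y')_{|k}$ as a union of connected components of $Y_{|k} \times_k Y'_{|k}$, hence a clopen immersion, again of codimension zero.

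For (1), with $\unit$ and $\counit$ already defined on objects and obviously natural, it remains only to verify the triangle identities
\[
(\counit_{X_L}) \circ (\unit_X)_L = \id_{X_L}, \qquad \counit_{Y_{|k}} \circ (\unit_Y)_{|k} = \id_{Y_{|k}},
\]
which are straightforward diagram chases in the category of schemes using the definitions of $\unit$ and $\counit$ as the structural and projection morphisms respectively. This gives the adjunction between the underlying scheme functors, and since $\ext_{L/k}$ and $\res_{L/k}$ extend to the correspondence categories (for $\ext$ this has already been noted; for $\res$ one simply views a correspondence over $L$ as a correspondence over $k$ via smoothness of $\spec L \to \spec k$), the adjunction passes to $\cor L$ and $\cor k$ unchanged.

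The only step requiring care will be (3): one must keep track of the fact that the orientations of the relevant $\omega$-bundles are preserved, but since all morphisms involved are smooth of relative dimension zero (in fact \'etale), the relative canonical bundles are trivialised canonically and no extra bookkeeping is needed. The remaining parts are then entirely formal.
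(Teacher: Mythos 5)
Your proposal is correct and follows essentially the same route as the paper: part (2) via associativity of fibre products (the paper checks it locally as $B\otimes_L L\otimes_k A\simeq B\otimes_k A$), part (3) via the idempotent splitting of $L\otimes_k L$ coming from separability with the multiplication map as the distinguished factor, part (4) by base change from (3), and part (1) as a formal verification. The only cosmetic difference is that you order the deductions slightly differently and add the (unneeded here, but harmless) remark about orientations of canonical bundles.
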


\begin{proof}
Part \eqref{item:adjsm} is straightforward. Part \eqref{item:proj} can be checked locally, when $X=\spec A$ with $A$ a $k$-algebra and $Y=\spec B$ with $B$ an $L$-algebra, and the morphism considered is the canonical isomorphism $B \otimes_L L \otimes_k A \simeq B \otimes_k A$. To prove \eqref{item:unit}, note that the general case follows from base-change to $Y$ of the case $Y=\spec L$, which corresponds to $L\otimes_k L \to L$ via multiplication. Since the source is a product of separable extensions of $L$, one of which is isomorphic to $L$ by the multiplication map, the claim holds.
Up to the isomorphism of part \eqref{item:proj}, the morphism $(Y \times_L Y')_{|k} \to Y_{|k} \times_k Y'_{|k}$ corresponds to the morphism $\unit_Y \times \id_{Y'}$ so \eqref{item:closedsm} follows from \eqref{item:unit} by base change to $Y'$. All schemes involved being reduced, the claims about identifications of irreducible components are clear.
\end{proof}

The functor $\res_{L/k}$ extends to a functor
$\xymatrix@1{
\cor L \ar[r]^-{\res_{L/k}} & \cor k.
}$
as we now explain. For any $X,Y \in \sm L$, we have $\Adm (X,Y)\subseteq \Adm (X_{|k},Y_{|k})$ by part \eqref{item:closedsm} of Lemma \ref{lem:finiterestriction}, so for any $T$ in it the push-forward induces a map 
\[
\chst dT{X \times Y}{\omega_{Y}} \to \chst dT{X_{|k} \times_k Y_{|k}}{\omega_{Y_{|k}}}
\]
because $\omega_{L/k}$ is canonically trivial. Passing to the limit, it gives a map $\cor L (X,Y) \to \cor k (X_{|k},Y_{|k})$, and it is compatible with the composition of correspondences; this is an exercise using base change, where the only nontrivial input is that when $X,Y,Z \in \sm L$, the push-forward from $X\times_L Y \times_L Z$ to $X_{|k} \times_k Y_{|k} \times_k Z_{|k}$ respects products, which follows from part \eqref{item:closedsm} of Lemma \ref{lem:finiterestriction}. 

The adjunction $(\res_{L/k},\ext_{L/k})$ between $\sm L$ and $\sm k$ from part \eqref{item:adjsm} of Lemma \ref{lem:finiterestriction} extends to an adjunction between $\cor L$ and $\cor k$, using the same unit and counit, to which we apply the graph functors to view them as correspondences (see after Definition \ref{def:cortilde}). 

We are now going to define another adjunction $(\ext_{L/k}, \res_{L/k})$ (note the reversed order) that only exists at the level of correspondences.
The unit and counit
\[
\tunit_X:  X \to (X_L)_{|k} \quad \text{and} \quad  \tcounit_Y:(Y_{|k})_L \to Y 
\]
are defined respectively as the transpose of $\counit$ and $\unit$, using Example \ref{ex:push-forwards}. By part \eqref{item:adjsm} of Lemma \ref{lem:finiterestriction} and by the composition properties of the transpose construction it is clear that they do define an adjunction $(\ext_{L/k},\res_{L/k})$. 

\begin{lem} \label{lem:compoextres}
The composition $\counit \circ \tunit$ is the multiplication (via the $\KMW_0(k)$-module structure) by the trace form of $L/k$ and the composition $\tcounit \circ \unit$ on $Y$ is the projection to the component of $(Y_{|k})_L$ corresponding to $Y$ and mentioned in \eqref{item:unit} of Lemma \ref{lem:finiterestriction}. 
\end{lem}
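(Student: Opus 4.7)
The plan is to recognize that both $\tunit_X$ and $\tcounit_Y$ are instances of the $\alpha(f,\psi)$ construction of Example \ref{ex:push-forwards}: namely $\tunit_X = \alpha(\counit_X, \psi_{\counit_X})$ and $\tcounit_Y = \alpha(\unit_Y, \psi_{\unit_Y})$, where in each case $\psi$ denotes the canonical orientation of the corresponding relative canonical sheaf. This orientation exists in both cases because $L/k$ is separable: $\counit_X : (X_L)_{|k} \to X$ is étale (being obtained by base change from $\spec{L} \to \spec{k}$), and $\unit_Y : Y \to (Y_{|k})_L$ is an open-and-closed immersion of codimension $0$ by part \eqref{item:unit} of Lemma \ref{lem:finiterestriction}; in both cases $\omega_f$ is canonically trivial.

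For the first assertion, I would apply the identity $\graph f \circ \alpha(f,\psi) = f_*\langle 1\rangle \cdot \id_Y$ from Example \ref{ex:push-forwards} with $f = \counit_X$, which gives $\counit_X \circ \tunit_X = (\counit_X)_*\langle 1\rangle \cdot \id_X$ in $\cor k(X,X)$, where the action of $\sKMW_0(X)$ is the one described in Example \ref{ex:action}. It then suffices to identify $(\counit_X)_*\langle 1\rangle \in \sKMW_0(X) = \GW(X)$ with the pullback of the trace form. Since $\counit_X$ is the base change of $\spec{L} \to \spec{k}$ along $X \to \spec{k}$, both of which are flat and therefore Tor-independent, the base change formula (in the version valid with supports, cf.\ Remark \ref{rem:noncartesian}) reduces this to the case $X = \spec{k}$. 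For $X=\spec{k}$, the push-forward $(\spec{L}\to\spec{k})_*$ computed using the canonical orientation of $\omega_{L/k}$ from Lemma \ref{lem:orientation} is precisely the Scharlau transfer along the trace map $\Tr_{L/k}:L\to k$, as recorded in the separable case of the subsection on transfers of bilinear forms. Hence $(\counit_X)_*\langle 1\rangle$ is the constant form $\Tr_{L/k *}\langle 1\rangle$, i.e.\ the trace form.

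For the second assertion, part \eqref{item:unit} of Lemma \ref{lem:finiterestriction} produces a decomposition $(Y_{|k})_L = Y \sqcup Y''$ in $\sm{L}$, in which $\unit_Y$ is the inclusion of the first summand. This yields direct sum decompositions
\[
\cor L(Y,(Y_{|k})_L) = \cor L(Y,Y)\oplus \cor L(Y,Y''), \qquad \cor L((Y_{|k})_L,Y) = \cor L(Y,Y)\oplus \cor L(Y'',Y).
\]
Examining the supports (both $\graph{\unit_Y}$ and $\alpha(\unit_Y,\psi)$ are supported on the diagonal of $Y \times_L Y$ inside the relevant product), one sees that $\graph{\unit_Y}$ has components $(\id_Y,0)$ and $\tcounit_Y = \alpha(\unit_Y,\psi)$ has components $(\id_Y,0)$, where the canonical orientation is trivial because $\unit_Y$ restricted to the first summand is an isomorphism. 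Their composition in $\cor L(Y,Y)$ is therefore $\id_Y \circ \id_Y = \id_Y$, and the dual composition $\unit_Y \circ \tcounit_Y \in \cor L((Y_{|k})_L,(Y_{|k})_L)$ is the idempotent that is the identity on $Y$ and zero on $Y''$: this is precisely the projection onto the $Y$-component of $(Y_{|k})_L$ claimed in the statement.

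The one point that demands care, and which I expect to be the main obstacle, is the first part's identification of $(\spec{L}\to\spec{k})_*\langle 1\rangle$ with the trace form of $L/k$. This rests on unwinding the canonical trivialization $\omega_{L/k}\simeq L$ used in Lemma \ref{lem:orientation} for separable extensions and verifying that the resulting push-forward on $\GW$ agrees with the Scharlau transfer associated to $\Tr_{L/k}$; the second part, by contrast, is purely a matter of book-keeping using the component decomposition and the composition formulas of Example \ref{ex:push-forwards}.
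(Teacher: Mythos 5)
Your proposal is correct and follows essentially the same route as the paper: the first assertion is obtained from the formula $\graph f\circ\alpha(f,\psi)=f_*\langle 1\rangle\cdot\id$ of Example \ref{ex:push-forwards} applied to $\counit_X$, a base-change reduction to $\sigma:\spec L\to\spec k$, and the identification of $\sigma_*\langle 1\rangle$ with the trace form via the canonical orientation and the Scharlau/trace description of transfers for separable extensions. For the second assertion the paper applies the same formula once more (the projector being $\delta_*\langle 1\rangle$ pulled back along the base change of the diagonal $\delta:\spec L\to\spec L\times_k\spec L$), whereas you carry out an explicit component-wise computation using the decomposition from part \eqref{item:unit} of Lemma \ref{lem:finiterestriction}; the two are interchangeable book-keeping, and your version has the small merit of computing both orders of composition, which resolves the $\tcounit\circ\unit$ versus $\unit\circ\tcounit$ ambiguity between the statement and the paper's own proof.
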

\begin{proof}
By Example \ref{ex:push-forwards}, we obtain that $\counit \circ \tunit$ is the multiplication by $\counit_* \langle 1\rangle$. Since $\counit_X$ is obtained by base change to $X$ of the structural map $\sigma:\spec L \to \spec k$, the element $\counit_* \langle 1 \rangle$ is actually the pull-back to $X$ of $\sigma_*\langle 1 \rangle$, which is the trace form of $L/k$ by the definition of finite push-forwards for Chow-Witt groups (or the Milnor-Witt sheaf $\sKMW_0$). Similarly, but this time by base change of the diagonal $\delta: \spec L \to \spec L \times \spec L$, we obtain that $\unit \circ \tcounit$ is the multiplication by $\delta_* \langle 1 \rangle$, which is the projector to the component corresponding to $\spec L$, and thus to $Y$ by base change. 
\end{proof}

The functors $\ext_{L/k}$ and $\res_{L/k}$ between $\sm k$ and $\sm L$ are trivially continuous for the Zariski topology: they send a covering to a covering and they preserve fiber products: $(X \times_Z X')_L \simeq X_L \times_{Z_L} X'_L$ and $(Y \times_T Y')_{|k}\simeq Y_{|k} \times_{T_{|k}} Y'_{|k}$. Therefore, they induce functors between categories of Zariski sheaves with transfers 
\[
\res_{L/k}^*: \sh k{\Zar}  \to \sh L{\Zar} \quad \text{and} \quad \ext_{L/k}^*: \sh L{\Zar} \to \Sh k{\Zar}.
\] 
In order to avoid confusion, we set $\bc_{L/k} =\res_{L/k}^*$ and $\tr_{L/k}=\ext_{L/k}^*$ in order to suggest the words ``base change'' and ``transfer'', but we still use the convenient notation $F_L$ for $\bc_{L/k}(F)$ and $G_{|k}$ for $\tr_{L/k}(G)$. We thus have, by definition
\[
F_L(U)=F(U_{|k})\qquad \text{and} \qquad G_{|k}(V)=G(V_L)
\]
for any $F \in \sh k{\Zar}$, $G \in \sh L{\Zar}$, $U \in \sm L$ and $V \in \sm k$.
It is formal that the adjunction $(\res_{L/k},\ext_{L/k})$ induces an adjunction $(\bc_{L/k},\tr_{L/k})$ with unit $\counit^*$ and counit $\unit^*$, while the adjunction $(\ext_{L/k},\res_{L/k})$ induces an adjunction $(\tr_{L/k},\bc_{L/k})$ with unit $\tcounit^*$ and counit $\tunit^*$. 

\begin{lem}
For any $X \in \sm k$ and $Y \in \sm L$, we have natural isomorphisms 
\[
(\tZX X)_L\simeq \tZX {X_L} \in \sh L{\Zar} \quad\text{and}\quad (\tZX Y)_{|k} \simeq \tZX {Y_{|k}} \in \sh k{\Zar}.
\]
In the same spirit, $(\uHom(\tZX X, \tZX k)_L\simeq \uHom(\tZX {X_L}, \tZX L) \in \sh L{\Zar}$, while on the other hand $\uHom(\tZX {Y}, \tZX L)_{|k} \simeq \uHom(\tZX {Y_{|k}},\tZX k) \in \sh k{\Zar}$.
\end{lem}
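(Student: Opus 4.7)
The plan is to unwind the definitions on sections and invoke the two adjunctions $(\res_{L/k},\ext_{L/k})$ and $(\ext_{L/k},\res_{L/k})$ between $\cor k$ and $\cor L$ established in the preceding paragraphs. Since both isomorphisms are claimed to hold as Zariski sheaves with generalized transfers, it suffices to check the underlying presheaves of abelian groups agree naturally, because base change $\bc_{L/k}$ and transfer $\tr_{L/k}$ on sheaves compute their values by evaluating on $U_{|k}$ and $V_L$ respectively, which are precisely the operations used by the representable presheaves.

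For the first isomorphism, evaluating on $U \in \sm L$ gives $(\tZX X)_L(U)=\tZX X(U_{|k})=\cor k(U_{|k},X)$, while $\tZX{X_L}(U)=\cor L(U,X_L)$; the adjunction $(\res_{L/k},\ext_{L/k})$ supplies a natural bijection between these two groups. Naturality in $U \in \cor L$ follows at once from the naturality of the adjunction, so this is a well-defined isomorphism in $\sh L\Zar$. Symmetrically, for $Y \in \sm L$ and $V \in \sm k$ one has $(\tZX Y)_{|k}(V)=\cor L(V_L,Y)$ and $\tZX{Y_{|k}}(V)=\cor k(V,Y_{|k})$, and the other adjunction $(\ext_{L/k},\res_{L/k})$ yields the required natural isomorphism. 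In both cases the unit/counit of the adjunction are built from graphs and their transposes (via the canonical trivializations of Lemma \ref{lem:orientation} in the inseparable-free setting $L/k$ separable), which makes the bijection completely explicit.

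For the isomorphisms involving $\tZY X$ and $\tZY Y$, I interpret these as the kernel-type Nisnevich sheaves associated to contractions by pointed factors (so that, consistently with the definition of $\tZ\{q\}$ for $q<0$, one has $\tZY{Z}(U)$ equal to the relevant kernel of $\tZ(U \times Z) \to \bigoplus \tZ(U \times Z_i)$). Then the key observation is that $\bc_{L/k}=\res_{L/k}^*$ and $\tr_{L/k}=\ext_{L/k}^*$ are each simultaneously a left and a right adjoint (since $(\bc_{L/k},\tr_{L/k})$ and $(\tr_{L/k},\bc_{L/k})$ are both adjunctions), hence exact; in particular they commute with the kernels used in the definition of $\tZY{-}$. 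Combining exactness with the two isomorphisms already proved for $\tZX{-}$ reduces the claimed isomorphisms for $\tZY{-}$ to the previous case.

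I do not foresee a real obstacle here: once the adjunctions $(\res_{L/k},\ext_{L/k})$ and $(\ext_{L/k},\res_{L/k})$ are in place, the content of the lemma is essentially a Yoneda-style restatement. The only point that requires mild vigilance is checking that the adjunction on morphism groups in $\cor$ is compatible, on the sheaf side, with the abstract adjoint-functor transport along $\bc_{L/k}$ and $\tr_{L/k}$; this is routine because the evaluation of a representable sheaf on $W$ is by construction $\cor(W,-)$, which converts the adjunction on $\cor$ directly into the adjunction on sheaves.
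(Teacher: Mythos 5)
Your first half is exactly the paper's argument: evaluate on sections, identify $(\tZX X)_L(U)=\cor k(U_{|k},X)$ with $\cor L(U,X_L)=\tZX{X_L}(U)$ via the adjunction $(\res_{L/k},\ext_{L/k})$, and dually for the restriction using $(\ext_{L/k},\res_{L/k})$. That part is fine.

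The second half has a genuine gap, stemming from a misreading of what $\tZY X$ is. For an \emph{unpointed} smooth scheme $X$, the sheaf $\tZY X$ is $(\tZ_k)^X$, i.e.\ $U\mapsto \cor k(U\times X,\pt)=\sKMW_0(U\times X)$; it is \emph{not} a kernel of maps between sheaves of the form $\tZX{(-)}$, and it is not a contraction. The kernel description you invoke is the definition of the smash-product sheaves $\tZY{(X_1,x_1)\wedge\ldots\wedge(X_n,x_n)}$, which is the content of the \emph{corollary} following this lemma, not of the lemma itself. Consequently, your strategy --- observe that $\bc_{L/k}$ and $\tr_{L/k}$ are exact (being simultaneously left and right adjoints) and hence commute with kernels, then reduce to the $\tZX$ case --- has nothing to apply to: there is no diagram of $\tZX$-sheaves whose kernel is $\tZY X$. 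Exactness is the right tool for the corollary, but for the lemma the $\tZY$ statements require a separate direct computation. Concretely, one needs
\[
(\tZY X)_L(U)=\cor k(U_{|k}\times_k X,\pt)\simeq \cor k\big((U\times_L X_L)_{|k},\pt\big)\simeq \cor L(U\times_L X_L,\spec L)=\tZY{X_L}(U),
\]
where the first isomorphism is the projection-formula identification $(U\times_L X_L)_{|k}\simeq U_{|k}\times_k X$ of Lemma \ref{lem:finiterestriction}, and the second is the adjunction applied to morphisms into the point (using $(\spec k)_L=\spec L$); the restriction statement is handled symmetrically with $(V_L\times_L Y)_{|k}\simeq V\times_k Y_{|k}$. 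Neither of these inputs appears in your argument, and without them the $\tZY$ half of the lemma is unproved. (A minor further slip: these are Zariski sheaves in the paper, not Nisnevich sheaves.)
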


\begin{proof}
We have $(\tZX X)_L(U) \simeq \cor k (U_{|k},X) \simeq \cor L (U,X_L) \simeq \tZX {X_L}(U)$, by the adjunction $(\res_{L/k},\ext_{L/k})$ and $(\tZX Y)_{|k}(V)\simeq \cor L (V_L,Y) \simeq \cor k (V,Y_{|k}) \simeq \tZX {Y_{|k}}(V)$ by the transposed adjunction $(\ext_{L/k},\res_{L/k})$.

Similarly, $\uHom (\tZX X,\tZX k)_L(U)\simeq \cor k (U_{|k}\times X,k) \simeq \cor k ((U\times_L X_L)_{|k},k) \simeq \cor L (U\times_L X_L,L) \simeq \uHom (\tZX {X_L},\tZX L)(U)$. Finally, $\uHom (\tZX Y,\tZX L)_{|k}(V)\simeq \cor L (V_L \times_L Y,L) \simeq \cor k ((V_L \times_L Y)_{|k},k) \simeq \cor k (V\times Y_{|k},k) \simeq \uHom (\tZX {Y_{|k}},\tZX k)(V)$.
\end{proof}

\begin{coro} \label{coro:extressmash}
For any sequence of pointed schemes $(X_1,x_1),\ldots,(X_n,x_n)$, we have
\[
(\tZX {(X_1,x_1)\wedge \ldots \wedge (X_n,x_n)})_L \simeq \tZX {((X_1)_L,(x_1)_L)\wedge\ldots\wedge ((X_n)_L,(x_n)_L)} \in \sh L{\Zar}
\]
and
\[
(\uHom (\tZX {(X_1,x_1)\wedge \ldots \wedge (X_n,x_n)},\tZX k)_L \simeq \uHom (\tZX {\wedge_{i=1}^n((X_i)_L,(x_i)_L)},\tZX k)
\]
in $\sh L{\Zar}$.
\end{coro}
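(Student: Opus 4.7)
The plan is to reduce the corollary to the preceding lemma by exploiting that the base-change functor $\bc_{L/k}$ is doubly adjoint, hence exact in a strong sense. From the two adjunctions $(\bc_{L/k},\tr_{L/k})$ and $(\tr_{L/k},\bc_{L/k})$ recorded just above the preceding lemma, the functor $\bc_{L/k}$ is simultaneously a left and a right adjoint on $\sh k{\Zar}$, so it preserves all small limits and colimits; in particular it commutes with finite direct sums, kernels and cokernels of presheaves.

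For the $\tZX$-case, I would unfold the definition: $\tZX{(X_1,x_1)\wedge\cdots\wedge(X_n,x_n)}$ is the cokernel of the split injective morphism
\[
\bigoplus_{i=1}^n \tZX{X_1\times\cdots\times X_{i-1}\times X_{i+1}\times\cdots\times X_n}\longrightarrow \tZX{X_1\times\cdots\times X_n}
\]
induced by the base-point inclusions $\id\times\cdots\times x_i\times\cdots\times\id$. Applying $\bc_{L/k}$ commutes past both the direct sum and the cokernel. Using the canonical identification $(X_1\times_k\cdots\times_k X_n)_L\simeq (X_1)_L\times_L\cdots\times_L(X_n)_L$, the fact that the base change of $x_i$ is $(x_i)_L$, and the isomorphism $(\tZX Z)_L\simeq \tZX{Z_L}$ from the preceding lemma (applied both to the full product and to each ``face''), I would identify the resulting presheaf with the cokernel of the analogous base-point morphism over $L$. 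By definition this cokernel is $\tZX{((X_1)_L,(x_1)_L)\wedge\cdots\wedge((X_n)_L,(x_n)_L)}$, yielding the first isomorphism.

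The $\tZY$-case is handled in exactly parallel fashion, with ``cokernel'' replaced by ``kernel'': by definition $\tZY{(X_1,x_1)\wedge\cdots\wedge(X_n,x_n)}=F^{(X_1,x_1)\wedge\cdots\wedge(X_n,x_n)}$ with $F=\tZ$ is the kernel of a map $F^{X_1\times\cdots\times X_n}\to\bigoplus_i F^{\widehat X_i}$, and since $\bc_{L/k}$ is also a right adjoint it preserves this kernel. The same ingredients (compatibility of base change with fiber products and rational points, together with the second half of the preceding lemma $(\tZY Z)_L\simeq \tZY{Z_L}$) then produce the required isomorphism.

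I do not expect any real obstacle here: the whole argument is functorial nonsense once the preceding lemma and the biadjointness of $\bc_{L/k}$ are in hand. The only minor point deserving verification is that the natural isomorphism $(\tZX -)_L\simeq \tZX{(-)_L}$ of the preceding lemma is compatible with the base-point inclusions, i.e.\ that it carries the base-change of $\id\times\cdots\times x_i\times\cdots\times\id$ to the corresponding $L$-linear version; but this is an immediate diagram chase in the proof of that lemma and requires no new input.
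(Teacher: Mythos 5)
Your proposal is correct and follows essentially the same route as the paper, whose entire proof is that the result follows from the preceding lemma applied to the split exact sequences defining the smash products. The only cosmetic difference is that you justify exactness of $\bc_{L/k}$ via its biadjointness, whereas the paper implicitly uses that the defining sequences are \emph{split} exact and hence preserved by any additive functor; both work, and your closing remark about naturality with respect to the base-point maps is exactly the (routine) check the paper leaves implicit.
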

\begin{proof}
It immediately from the lemma applied to the split exact sequences defining the smash-products. 
\end{proof}

The same type of result for smash products would hold for restrictions, but the restriction of an $L$-pointed scheme is an $L_{|k}$-pointed scheme, not an $k$-pointed scheme. Nevertheless, the counit $\tunit^*$ of the adjunction $(\tr_{L/k},\bc_{L/k})$ induces maps
\begin{gather}
\big((\tZX {(X_1,x_1)\wedge \ldots \wedge (X_n,x_n)})_L\big)_{|k} \to \tZX {(X_1,x_1)\wedge\ldots\wedge (X_n,x_n)} \label{eq:mapsmashup} \\
\big(\uHom (\tZX {\wedge_{i=1}^n(X_i,x_i)},\tZX k)_L\big)_{|k} \to \uHom (\tZX {\wedge_{i=1}^n(X_i,x_i)},\tZX k). \label{eq:mapsmashdown}
\end{gather}
while the unit $\counit^*$ of the transposed adjunction $(\bc_{L/k},\tr_{L/k})$ induces maps
\begin{gather}
\tZX {(X_1,x_1)\wedge\ldots\wedge (X_n,x_n)} \to \big((\tZX {(X_1,x_1)\wedge \ldots \wedge (X_n,x_n)})_L\big)_{|k}  \label{eq:maptildesmashup} \\
\uHom (\tZX {\wedge_{i=1}^n(X_i,x_i)},\tZX k) \to \big((\uHom (\tZX {\wedge_{i=1}^n(X_i,x_i)},\tZX k))_L\big)_{|k}. \label{eq:maptildesmashdown}
\end{gather}

\begin{lem} \label{lem:extresCstar}
For any $F \in \sh k{\Zar}$ and any $G \in \sh L{\Zar}$, we have canonical isomorphisms $(\Cstar F)_L \simeq \Cstar (F_L)$ and $(\Cstar G)_{|k} \simeq \Cstar (G_{|k})$.
\end{lem}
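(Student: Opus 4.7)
The plan is to verify both isomorphisms degree-by-degree in the simplicial direction, since $\Cstar F$ is by construction the complex associated to the simplicial presheaf $n \mapsto \Theta_F(\Delta^n) = F(-\times_k\Delta^n_k)$, and the formation of the associated complex from a simplicial sheaf commutes with any additive functor. Thus it suffices to produce natural isomorphisms $\Theta_{F_L}(\Delta^n_L) \simeq (\Theta_F(\Delta^n_k))_L$ and $\Theta_{G_{|k}}(\Delta^n_k) \simeq (\Theta_G(\Delta^n_L))_{|k}$ as sheaves on $\sm L$ (resp.\ $\sm k$), functorially in $n$ with respect to the cosimplicial structure on $\Delta^\bullet$.

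For the first isomorphism, I would fix $U \in \sm L$ and compute
\[
(\Cstar F)_L(U) = (\Cstar F)(U_{|k}),
\]
whose degree $n$ term is $F(U_{|k} \times_k \Delta^n_k)$. Applying part \eqref{item:proj} of Lemma \ref{lem:finiterestriction} with $Y=U$ and $X=\Delta^n_k$ yields a canonical isomorphism $(U \times_L \Delta^n_L)_{|k} \simeq U_{|k}\times_k \Delta^n_k$, so the degree $n$ term equals
\[
F\big((U \times_L \Delta^n_L)_{|k}\big) = F_L(U \times_L \Delta^n_L) = \Cstar(F_L)_n(U).
\]
For the second isomorphism, fix $V \in \sm k$; then the degree $n$ term of $(\Cstar G)_{|k}(V)= (\Cstar G)(V_L)$ is $G(V_L \times_L \Delta^n_L)$, and the obvious base-change identity $V_L \times_L \Delta^n_L \simeq (V \times_k \Delta^n_k)_L$ (which is just associativity of fiber products over the square of $\spec L \to \spec k$) allows me to rewrite this as $G_{|k}(V \times_k \Delta^n_k) = \Cstar(G_{|k})_n(V)$.

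Compatibility with the cosimplicial structure of $\Delta^\bullet$ is then automatic: the face and degeneracy morphisms $\Delta^n_k \to \Delta^{n\pm 1}_k$ base-change to the corresponding maps $\Delta^n_L \to \Delta^{n\pm 1}_L$, and the isomorphisms from Lemma \ref{lem:finiterestriction}\eqref{item:proj} and from the base-change identity are natural in the schemes involved, so they commute with the maps induced on $F$ (resp.\ $G$) by the cosimplicial structure. Taking the associated complexes then yields the desired isomorphisms of complexes of sheaves. The only step requiring any care is the first isomorphism, where one must invoke Lemma \ref{lem:finiterestriction}\eqref{item:proj} rather than a naive base-change identity—the restriction of scalars does not commute strictly with product, but it does so up to the canonical projection formula provided by that lemma; this is not truly an obstacle, merely the point where perfectness of the extension is implicitly used.
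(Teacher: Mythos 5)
Your proof is correct and follows exactly the paper's (one-line) argument: the paper's proof reads ``It is straightforward, using $Y_{|k} \times \Delta^n \simeq (Y \times_L \Delta^n_L)_{|k}$ for $F$ and $X_L \times_L \Delta_L^n \simeq (X \times_k \Delta^n)_L$ for $G$,'' which are precisely the two identifications you verify degree by degree. (Your closing aside is slightly off: Lemma \ref{lem:finiterestriction}\eqref{item:proj} is a pure base-change isomorphism $B \otimes_L L \otimes_k A \simeq B \otimes_k A$ and does not use perfectness of the extension; but this does not affect the argument.)
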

\begin{proof}
It is straightforward, using $Y_{|k} \times \Delta^n \simeq (Y \times_L \Delta^n_L)_{|k}$ for $F$ and $X_L \times_L \Delta_L^n \simeq (X \times_k \Delta^n)_L$ for $G$.
\end{proof}

To avoid confusion, let us write $\tZ_k\{q\}$, $\tZ_k(q)$, etc. for the (complexes of) sheaves over $k$, and $\tZ_L\{q\}$, $\tZ_L(q)$ etc. for the same objects over $L$.

For any $q \in \Z$, using Corollary \ref{coro:extressmash}, we obtain, an isomorphism $\tZ_k\{q\}_L \simeq \tZ_L \{q\}$. Using the maps \eqref{eq:mapsmashup} for $q\geq 0$ and \eqref{eq:mapsmashdown} for $q<0$, applied to copies of $\gmpt$, we obtain a morphism $(\tZ_L\{q\})_{|k} \to \tZ\{q\}$. Symmetrically, using the maps \eqref{eq:maptildesmashup} and \eqref{eq:maptildesmashdown} we obtain a morphism $\tZ\{q\} \to (\tZ_L\{q\})_{|k}$. Using Lemma \ref{lem:extresCstar}, they induce an isomorphism and morphisms 
\begin{equation} \label{eq:transferZq}
\tZ_k(q)_L \simeq \tZ_L(q), 
\qquad
\xymatrix@C=3ex{(\tZ_L(q))_{|k} \ar[r]^-{\tunit^*} & \tZ(q)}
\qquad \text{and} \qquad 
\xymatrix@C=3ex{\tZ(q) \ar[r]^-{\counit^*} & (\tZ_L(q))_{|k}.}
\end{equation} 

\begin{lem} \label{lem:crossediso}
For any Zariski sheaf $F$ on $\sm L$ and any $X \in \sm L$, we have $\H^*(X,F_{|k})=\H^*(X_L,F)$. For any Zariski sheaf $G$ on $\sm k$ and any $Y \in \sm k$, we have $\H^*(Y,G_L)=\H^*(Y_{|k},G)$. More generally, if $F$ and $G$ are complexes of Zariski sheaves, we have the same result for hypercohomology.
\end{lem}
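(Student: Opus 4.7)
The plan is to reduce both claims to the standard Leray argument for affine morphisms. Recall that for an affine morphism $\pi:Z\to Y$ of schemes, the direct image $\pi_*$ on Zariski sheaves of abelian groups is exact (essentially because affine covers of $Y$ pull back to affine covers of $Z$, so Čech cohomology vanishes fiberwise), and hence $R^q\pi_*=0$ for $q>0$. The Leray spectral sequence therefore degenerates, giving $\H^p(Y,\pi_*\mathcal{F})\simeq\H^p(Z,\mathcal{F})$ for every Zariski sheaf $\mathcal{F}$ on $Z$.

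For the first statement, observe that when $L/k$ is finite, the projection $\pi:X_L\to X$ is obtained by base change from the finite morphism $\spec L\to\spec k$, so it is finite, in particular affine. Unwinding the definition of $\bc_{L/k}$ in the excerpt, for any Zariski open $U\subseteq X$ one has
\[
F_{|k}(U)=F(U_L)=F(\pi^{-1}(U)),
\]
which is precisely $(\pi_*\widetilde F)(U)$, where $\widetilde F$ denotes the restriction of $F$ to the small Zariski site of $X_L$. Hence the restriction of $F_{|k}$ to the small Zariski site of $X$ coincides with $\pi_*\widetilde F$, and the Leray argument above gives $\H^*(X,F_{|k})=\H^*(X_L,F)$. (If $X\in\sm L$ is meant, replace $X$ by $X_{|k}$ throughout and invoke \eqref{item:unit} of Lemma \ref{lem:finiterestriction} to identify the component of $(X_{|k})_L$ corresponding to $X$; the same argument then applies componentwise.)

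The second statement is even easier: $Y$ and $Y_{|k}$ share the same underlying topological space, so they have identical small Zariski sites, and by definition of $\bc_{L/k}$ we have $G_L(U)=G(U_{|k})$ for every Zariski open $U$. In other words, the small Zariski sheaves induced by $G_L$ on $Y$ and by $G$ on $Y_{|k}$ coincide literally, and hence so do their (hyper)cohomology groups.

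For the hypercohomology version, the same argument works verbatim: since $\pi_*$ is exact it commutes with the formation of truncations and preserves quasi-isomorphisms, so it promotes termwise to a functor on complexes, and the hypercohomology Leray spectral sequence degenerates thanks to the vanishing of the higher $R^q\pi_*$. There is essentially no obstacle here, as the content of the lemma is really only the affineness of $X_L\to X$; the only mildly technical point is the routine bookkeeping identifying the restriction of a big-site sheaf to a small Zariski site with the direct image under $\pi$, which is immediate from the defining formulas $F_{|k}(V)=F(V_L)$ and $G_L(U)=G(U_{|k})$.
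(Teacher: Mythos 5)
Your argument for the second identity is correct and complete, and cleaner than what one might fear: $Y$ and $Y_{|k}$ have the same underlying topological space, hence the same small Zariski site, and by the formula $G_L(U)=G(U_{|k})$ the sheaf $G_L$ restricts to literally the same small-site sheaf as $G$, so the two cohomologies coincide tautologically. The gap is in the first identity, and it sits exactly where you put all the weight: the claim that an affine morphism has exact direct image on Zariski sheaves of abelian groups, ``because \v{C}ech cohomology vanishes fiberwise''. This is false. Vanishing of cohomology on affines is a statement about \emph{quasi-coherent} sheaves; for arbitrary abelian Zariski sheaves an affine scheme has nonvanishing higher cohomology in general (already $\H^1_{\Zar}(\spec A,\O^\times)=\Pic(A)$). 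The exactness of $\pi_*$ for finite morphisms that you may be recalling is a theorem in the \emph{\'etale} topology, not the Zariski topology.

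Worse, the conclusion you need, namely $R^q\pi_*=0$ for $q>0$ for the finite morphism $\pi\colon X_L\to X$, genuinely fails in the Zariski topology, so the strategy cannot be repaired by replacing ``affine'' with ``finite''. Take $k=\mathbb{Q}$, $L=\mathbb{Q}(i)$, $X=\A^1_{\mathbb{Q}}$, so $X_L=\A^1_{\mathbb{Q}(i)}$; let $y=(x^2+1)\in X$ with fibre $\pi^{-1}(y)=\{z_1,z_2\}$, and let $\mathcal F=j_!\Z$ be the extension by zero of the constant sheaf from $X_L\setminus\{z_1,z_2\}$. For every connected open $U\ni y$ the preimage $\pi^{-1}(U)$ is irreducible, so $\pi_*\Z_{X_L}=\Z_X$ and the induced map to the skyscraper $\Z\oplus\Z$ at $y$ is the diagonal on sections near $y$; it is not an epimorphism of sheaves, $\pi_*$ is not exact, and one computes $\H^1(X_L,\mathcal F)\simeq\Z$ while $\H^1(X,\pi_*\mathcal F)=0$. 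Hence no argument factoring through ``$\pi_*$ is exact'' can establish the first identity for arbitrary Zariski sheaves. For comparison, the paper's own proof takes a different route, choosing a resolution $I^\bullet$ of $F$ by sheaves that are flabby on every small site and asserting that $\tr_{L/k}$ carries it to a flabby resolution of $F_{|k}$; but the same acyclicity is hidden in the word ``resolution'' there, since it amounts to the vanishing of $\H^{>0}_{\Zar}$ of the semilocal, non-local schemes $\spec{L\otimes_k\O_{X,x}}$ with coefficients in $F$. Any complete proof therefore requires an input beyond affineness or finiteness of $\pi$ --- for instance a restriction on the class of sheaves $F$, or an argument specific to the complexes $\tZ_L(q)$ to which the lemma is applied --- and your proposal does not supply such an input.
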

\begin{proof}
Let a Zariski sheaf $F$ be flabby if restricted to the small site of any scheme, it gives a flabby sheaf in the usual sense: restrictions are surjective. The Zariski cohomology can then be computed out of resolutions by such flabby sheaves. Both functors $\bc_{L/k}$ and $\tr_{L/k}$ preserve such flabby resolutions. So, given a flabby resolution $I^\bullet$ of $F$, we have 
\[
\H^i(X_L,F)=\H^i(I^\bullet(X_L))= \H^i(\ext_{L/k}^*I^\bullet (X))=\H^i(X,F_{|k}).
\]
A similar proof holds for $G$ and $Y$. The claim about hypercohomology is proved similarly using flabby Cartan-Eilenberg resolutions.
\end{proof}

Using the morphisms \eqref{eq:transferZq} and Lemma \ref{lem:crossediso}, we obtain for any $X \in \sm k$ and any $q$ two morphisms 
\[
\mathbb{H}^p(X, \tZ_k(q)) \to \mathbb{H}^p(X,(\tZ_L(q))_k) \simeq \mathbb{H}^p(X_L,\tZ_L(q))
\]
and
\[
\mathbb{H}^p(X_L, \tZ_L(q)) \simeq \mathbb{H}^p(X,(\tZ_L(q))_k) \to \mathbb{H}^p(X,\tZ_k(q)).
\]
in other words:
\begin{equation}
\xymatrix{
{\HMW^{p,q}}_k(X,\Z) \ar[r]^{\bc_{L/k}} & {\HMW^{p,q}}_L(X_L,\Z) 
} 
\qquad \text{and} \qquad 
\xymatrix{
{\HMW^{p,q}}_L(X_L,\Z) \ar[r]^{\tr_{L/k}} & {\HMW^{p,q}}_k(X,\Z).
}
\end{equation}
Using Lemma \ref{lem:compoextres}, we obtain 

\begin{lem}
On MW-motivic cohomology, the composition $\tr_{L/k} \circ \bc_{L/k}$ is the multiplication (via the $\KMW_0(k)$-module structure) by the trace form of the extension $L/k$.
\end{lem}

We now compare the MW-cohomology groups when computed over $k$ for a limit scheme that is of the form $X_L$ and when computed over some extension $L$ of $k$.

If $L$ is a finitely generated extension of $k$, that we view as the inverse limit $L=\varprojlim U$ of schemes $U \in \sm k$, we obtain a natural map
\[
\varinjlim_{T \in \Adm(U\times X,Y)} \hspace{-3ex}\chst dT {U \times X \times Y}{\omega_{Y}} \to \hspace{-1ex} \varinjlim_{T' \in \Adm(X_L,Y_L)}\hspace{-3ex}\chst d{T'} { X_L \times Y_L}{\omega_{Y_L}} 
\]
induced by pull-backs between the groups where $T'=T_L$.
When $d=\dim Y$, this is a map 
\[
\cor k (U \times X,Y) \to \cor L (X_L,Y_L)
\]
functorial in $U$. Taking the limit over $U$, we therefore obtain a map
\[
\xymatrix@1{
\cor k (X_L,Y) = \varinjlim \cor k(U \times X,Y) \ar[r]^-{\Psi_{L/k}} & \cor L (X_L,Y_L)
}
\]
using the construction of section \ref{sec:limits} to define the left hand side.
\begin{prop}
The map $\Psi_{L/k}$ is an isomorphism.
\end{prop}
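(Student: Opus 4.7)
The plan is to split the assertion into surjectivity and injectivity, both of which reduce to two compatibilities for the limit presentation $\spec L = \varprojlim_\lambda U_\lambda$ with $U_\lambda \in \sm k$ and affine \'etale transitions. The first compatibility is a bijection
\[
\varinjlim_\lambda \Adm(U_\lambda \times X, Y) \stackrel{\sim}{\longrightarrow} \Adm(X_L, Y_L),
\]
sending $T$ to its base change $T_L = T \times_{U_\lambda} \spec L$ viewed inside $X_L \times_L Y_L = X \times_k Y \times_k \spec L$. Surjectivity of this bijection is a standard spreading-out argument from \cite[\S 8]{EGAIV3}: any closed subset of finite presentation in the limit $X_L \times_L Y_L$ descends to some $U_\lambda \times X \times Y$, and finiteness and surjectivity of the projection to $U_\lambda \times X$ on each irreducible component can be arranged after enlarging $\lambda$. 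Injectivity is immediate from the pull-back description.

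The second compatibility is that, for a fixed admissible $T \in \Adm(U_\lambda \times X, Y)$ with base change $T' = T_L \in \Adm(X_L, Y_L)$, the natural pull-back map
\[
\varinjlim_{\mu \geq \lambda} \chst d {T_\mu}{U_\mu \times X \times Y}{\omega_{(U_\mu \times X \times Y)/(U_\mu \times X)}} \stackrel{\sim}{\longrightarrow} \chst d{T'}{X_L \times_L Y_L}{\omega_{X_L \times_L Y_L/X_L}}
\]
is an isomorphism, where $T_\mu := T \times_{U_\lambda} U_\mu$. This is a direct application of Lemma \ref{lem:limitChowiso}, using that the relative twists are pulled back from $\omega_{Y/k}$, respectively from $\omega_{Y_L/L} \simeq (\omega_{Y/k})_L$, so the triples fit into the category $\tP$. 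Combining the two, for surjectivity of $\Psi_{L/k}$: realize $\alpha' \in \cor L(X_L, Y_L)$ as a Chow-Witt class with admissible support $T'$ via Lemma \ref{lem:supportadmis}, lift $T'$ to an admissible $T$ at some level $\lambda$, and then lift the class itself to some level $\mu \geq \lambda$ using the displayed isomorphism; the result is a preimage in $\cor k(U_\mu \times X, Y) \to \cor k(X_L, Y)$. For injectivity, any class in $\cor k(X_L, Y)$ is represented by some $\alpha \in \cor k(U_\lambda \times X, Y)$ with admissible support $T$, and vanishing of its image in $\cor L(X_L, Y_L)$ means, via the isomorphism, that $\alpha$ already vanishes in $\chst d{T_\mu}{U_\mu \times X \times Y}{\omega}$ for some $\mu \geq \lambda$, hence in the colimit defining $\cor k(X_L, Y)$.

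The main obstacle is the bijection of admissible subsets: the underlying EGA statements are standard, but one must carefully ensure that finiteness and dominance descend simultaneously on every irreducible component, so that admissibility descends and not just closedness. Once this is in place, the Chow-Witt part is a purely formal consequence of Lemma \ref{lem:limitChowiso} together with the definition of the target as a direct limit over admissible supports.
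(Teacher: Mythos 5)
Your proof is correct and follows essentially the same route as the paper: both arguments identify the Chow-Witt group with fixed support over the limit via Lemma \ref{lem:limitChowiso} and use the spreading-out results of \cite[\S 8]{EGAIV3} to descend admissible supports from $X_L\times_L Y_L$ to some $U_\lambda\times X\times Y$, the only difference being organizational (you separate surjectivity and injectivity of $\Psi_{L/k}$, while the paper rewrites the source as a single limit over pairs $(U,T)$ and exhibits a bijective map directly). Your explicit remark that finiteness and surjectivity of each irreducible component must be arranged after enlarging $\lambda$ -- not just closedness -- is a point the paper's proof leaves implicit, and is worth keeping.
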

\begin{proof}
To shorten the notation, we drop the canonical bundles in the whole proof, since they behave as they should by pull-back. 
The source of $\Psi_{L/k}$ can be defined using a single limit, as 
\[
\varinjlim_{(U,T)} \chs dT{U\times X \times Y}
\]
where the limit runs over the pairs $(U,T)$ with $(U,T)\leq (U',T')$ if there is a map $U' \subseteq U$ and $T\cap U' \subseteq T'$. The corresponding transition map on Chow-Witt groups is the restriction to $U'$ composed with the extension of support from $T \cap U'$ to $T'$. Note that both of these maps are injective, as explained in the proof of Lemma \ref{lem:restrictioninj} for the first one and right before that same Lemma for the second one. 
The maps $f_{U,T}: \chs dT{U \times X \times Y} \to \varinjlim_{V \subseteq U} \chs d{T \cap V}{V \times X \times Y}$ sending the initial group in the direct system to its limit are again injective, and their target can be identified with $\chs d{T_L}{X_L \times Y_L}$ by Lemma \ref{lem:limitChowiso} (independently of $U$, except that $T$ lives on $U$). The $f_{U,T}$ therefore induce an injective map
\begin{equation} \label{eq:limitmap}
\varinjlim_{(U,T)} \chs dT{U\times X \times Y} \to \varinjlim_{(U,T)} \chs d{T_L}{X_L \times Y_L}
\end{equation}
This map is also surjective because any $\chs d{T\cap V}{V \times X \times Y}$ in the target of $f_{U,T}$ is surjected by the same group on the source, after passing from $(U,T)$ to $(V, T \cap V)$. Finally, the isomorphism \eqref{eq:limitmap} actually maps to $\cor L(X_L,Y_L)$ because any $T' \in \Adm(X_L,Y_L)$ actually comes by pull-back from some open $U$, by \cite[8.3.11]{EGAIV3}. It is easy to see that in defining that isomorphism we have just expanded the definition of $\Psi_{L/k}$
\end{proof}


\bibliography{FGW}{}

\begin{thebibliography}{10}

\bibitem{Asok12b}
A.~Asok and J.~Fasel.
\newblock Algebraic vector bundles on spheres.
\newblock {\em J. Topology}, 7(3):894--926, 2014.
\newblock doi:10.1112/jtopol/jtt046.

\bibitem{Asok12a}
A.~Asok and J.~Fasel.
\newblock A cohomological classification of vector bundles on smooth affine
  threefolds.
\newblock {\em Duke Math. J.}, 163(14):2561--2601, 2014.

\bibitem{Asok13}
A.~Asok and J.~Fasel.
\newblock Comparing {E}uler classes.
\newblock {\em Quart. J. Math.}, 67:603--635, 2016.
\newblock Available at http://arxiv.org/abs/1306.5250.

\bibitem{Balmer02}
P.~Balmer and C.~Walter.
\newblock A {G}ersten-{W}itt spectral sequence for regular schemes.
\newblock {\em Ann. Sci. \'Ecole Norm. Sup. (4)}, 35(1):127--152, 2002.

\bibitem{Calmes17}
B.~Calm\`es and J.~Fasel.
\newblock A primer for {M}ilnor-{W}itt {$K$}-theory sheaves and their
  cohomology.
\newblock In preparation, 2017.

\bibitem{Deglise16}
F.~D{\'e}glise and J.~Fasel.
\newblock {$MW$}-motivic complexes.
\newblock In preparation, 2016.

\bibitem{Fasel07}
J.~Fasel.
\newblock The {C}how-{W}itt ring.
\newblock {\em Doc. Math.}, 12:275--312, 2007.

\bibitem{Fasel08a}
J.~Fasel.
\newblock Groupes de {C}how-{W}itt.
\newblock {\em M\'em. Soc. Math. Fr. (N.S.)}, 113:viii+197, 2008.

\bibitem{Fasel09d}
J.~Fasel.
\newblock The projective bundle theorem for ${I}^j$-cohomology.
\newblock {\em J. K-Theory}, 11(2):413--464, 2013.

\bibitem{Fasel09c}
J.~Fasel and V.~Srinivas.
\newblock Chow-{W}itt groups and {G}rothendieck-{W}itt groups of regular
  schemes.
\newblock {\em Adv. Math.}, 221(1):302--329, 2009.

\bibitem{Friedlander02}
E.~M. Friedlander and A.~A. Suslin.
\newblock The spectral sequence relating algebraic {$K$}-theory to motivic
  cohomology.
\newblock {\em Ann. Sci. \'Ecole Norm. Sup. (4)}, 35(6):773--875, 2002.

\bibitem{Garkusha14}
G.~Garkusha and I.~Panin.
\newblock Framed {M}otives of algebraic varieties.
\newblock arXiv:1409.4372, 2014.

\bibitem{EGAI}
A.~Grothendieck.
\newblock \'{E}l\'ements de g\'eom\'etrie alg\'ebrique. {I}. {L}e langage des
  sch\'emas.
\newblock {\em Inst. Hautes \'Etudes Sci. Publ. Math.}, 4:228, 1960.

\bibitem{EGAII}
A.~Grothendieck.
\newblock \'{E}l\'ements de g\'eom\'etrie alg\`ebrique: {II}. \'{E}tude globale
  \'el\'ementaire de quelques classes de morphismes.
\newblock {\em Publ. Math. Inst. Hautes \'Etudes Sci.}, 8:5--222, 1961.

\bibitem{EGAIV3}
A.~Grothendieck.
\newblock \'{E}l\'ements de g\'eom\'etrie alg\`ebrique: {IV}. \'{E}tude locale
  des sch\'emas et des morphismes de sch\'emas, {T}roisi\`eme partie.
\newblock {\em Publ. Math. Inst. Hautes \'Etudes Sci.}, 28:5--255, 1966.

\bibitem{Kunz86}
E.~Kunz.
\newblock {\em K\"ahler differentials}.
\newblock Advanced {L}ectures in {M}athematics. Braunschweig/Wiesbaden,
  {F}rider. {V}ieweg \& {S}ohn edition, 1986.

\bibitem{Mazza06}
C.~Mazza, V.~Voevodsky, and C.~Weibel.
\newblock {\em Lecture notes on motivic cohomology}, volume~2 of {\em Clay
  Mathematics Monographs}.
\newblock American Mathematical Society, Providence, RI, 2006.

\bibitem{Milnor69}
J.~Milnor.
\newblock Algebraic {$K$}-theory and quadratic forms.
\newblock {\em Invent. Math.}, 9:318--344, 1969/1970.

\bibitem{Milnor73}
J.~Milnor and D.~Husemoller.
\newblock {\em Symmetric bilinear forms}, volume~73 of {\em Ergebnisse der
  Mathematik und ihrer Grenzgebiete. 3. Folge. A Series of Modern Surveys in
  Mathematics}.
\newblock Springer, Berlin, 1973.

\bibitem{Morel04}
F.~Morel.
\newblock Sur les puissances de l'id\'eal fondamental de l'anneau de {W}itt.
\newblock {\em Comment. Math. Helv.}, 79(4):689--703, 2004.

\bibitem{Morel11}
F.~Morel.
\newblock On the {F}riedlander-{M}ilnor conjecture for groups of small rank.
\newblock {\em Current {D}evelopments in {M}athematics}, 2010:45--93, 2011.

\bibitem{Morel08}
F.~Morel.
\newblock {\em $\mathbb {A}^1$-{A}lgebraic {T}opology over a {F}ield}, volume
  2052 of {\em Lecture Notes in Math.}
\newblock Springer, New York, 2012.

\bibitem{Neshitov14}
A.~Neshitov.
\newblock Framed correspondences and {M}ilnor-{W}itt {$K$}-theory.
\newblock arXiv:1410.7417. To appear in J. Inst. Math. Jussieu, 2014.

\bibitem{Scharlau72}
W.~Scharlau.
\newblock Quadratic reciprocity laws.
\newblock {\em J. Number Theory}, 4(1):78--97, 1972.

\bibitem{Schmid98}
M.~Schmid.
\newblock {\em Wittringhomologie}.
\newblock PhD thesis, Universit\"at Regensburg, 1998.

\bibitem{Serre68}
J.-P. Serre.
\newblock {\em Corps locaux}.
\newblock Publications de l'institut math\'ematique de l'universit\'e de
  Nancago. 1968.

\end{thebibliography}
\bibliographystyle{plain}


\end{document}